\newcounter{ENUM}
\def\<{\langle}
\def\>{\rangle}
\def\0{{{\bf 0}}}
\def\OO{{\mathcal O}}
\def\CF{{\mathcal F}}
\def\CK{{\mathcal K}}
\def\CA{{\mathcal A}}
\def\CT{{\mathcal T}}
\def\CG{{\mathcal G}}
\def\CP{{\mathcal P}}
\def\AA{{\mathbb A}}
\def\FF{{\mathbb F}}
\def\GG{{\mathbb G}}
\def\LL{{\mathbb L}}
\def\QQ{{\mathbb Q}}
\def\tpi{{\tilde \pi}}
\def\trho{{\tilde \rho}}
\def\rhobar{{\overline{\rho}}}
\def\rhobar{{\overline{\rho}}}
\def\unif{\varpi}
\newcommand{\Ind}{\operatorname{Ind}}
\newcommand{\cInd}{\operatorname{c-Ind}}
\newcommand{\Res}{\operatorname{Res}}
\newcommand{\St}{\operatorname{St}}
\newcommand{\inv}{\operatorname{inv}}
\newcommand{\sss}{\mbox{\rm \tiny ss}}
\newcommand{\ur}{\mbox{\rm \tiny ur}}
\def\ZZ{{\mathbb Z}}
\def\Hom{\operatorname{Hom}}
\def\Fr{\operatorname{Fr}}
\def\End{\operatorname{End}}
\def\Aut{\operatorname{Aut}}
\def\Gal{\operatorname{Gal}}
\def\Spf{\operatorname{Spf}}
\def\GL{\operatorname{GL}}
\def\Spec{\operatorname{Spec}}
\def\Res{\operatorname{Res}}
\newcommand{\Id}{\operatorname{Id}}
\newcommand{\margh}[1]{}
\newtheorem{thm}{Theorem}[section]
\newtheorem{theorem}[thm]{Theorem}
\newtheorem{prop}[thm]{Proposition}
\newtheorem{proposition}[thm]{Proposition}
\newtheorem{lemma}[thm]{Lemma}
\newtheorem{cor}[thm]{Corollary}
\newtheorem{corollary}[thm]{Corollary}
\newtheorem{conjecture}[thm]{Conjecture}
\theoremstyle{definition}
\newtheorem{defn}[thm]{Definition}
\newtheorem{definition}[thm]{Definition}
\newtheorem{question}[thm]{Question}
\newtheorem{remark}[thm]{Remark}
\numberwithin{equation}{section}
\def\CGbar{{\overline{\mathcal G}}}
\def\CBbar{{\overline{\mathcal B}}}
\def\CTbar{{\overline{\mathcal T}}}
\def\CUbar{{\overline{\mathcal U}}}
\def\CLbar{{\overline{\mathcal L}}}
\def\CPbar{{\overline{\mathcal P}}}
\def\taubar{{\overline{\tau}}}
\def\tZ{{\tilde Z}}
\def\tkappa{{\tilde \kappa}}
\def\tY{{\tilde Y}}
\def\tsigma{{\tilde \sigma}}
\def\ttau{{\tilde \tau}}
\def\tFr{{\tilde \Fr}}
\newcommand\Rep{\operatorname{Rep}}
\begin{document}
\title{Curtis homomorphisms and the integral Bernstein center for $\GL_n$}
\author{David Helm}
\subjclass[2010]{11F33,11F70,22E50}

\maketitle

\begin{abstract}
We describe two conjectures, one strictly stronger than the other, that give descriptions of the integral Bernstein center for
$\GL_n(F)$ (that is, the center of the category of smooth $W(k)[\GL_n(F)]$-modules, for $F$ a $p$-adic field and $k$ an algebraically closed
field of characteristic $\ell$ different from $p$) in terms of Galois theory.  Moreover, we show that the weak version of the conjecture (for $m \leq n$),
together with the strong version of the conjecture for $m < n$,
implies the strong conjecture for $\GL_n$.  In a companion paper~\cite{converse} we show that the strong conjecture for $n-1$ implies the weak
conjecture for $n$; thus the two papers together give an inductive proof of both conjectures.  The upshot is a description of the Bernstein center in
purely Galois theoretic terms; previous work of the author shows that this description implies the conjectural ``local Langlands correspondence in families''
of~\cite{emerton-helm}.
\end{abstract}

\section{Introduction}

In~\cite{emerton-helm}, Emerton and the author describe a conjectural ``local Langlands correspondence in families'' for the group
$\GL_n(F)$, where $F$ is a $p$-adic field.  More precisely, we show that given a suitable
coefficient ring $A$ (in particular complete and local with residue characteristic $\ell$ different from $p$), 
and a family of Galois representations $\rho: G_F \rightarrow \GL_n(A)$, there is, up to isomorphism, at most one admissible $A[\GL_n(F)]$-module
$\pi(\rho)$ that ``interpolates the local Langlands correspondence across the family $\rho$'' and satisfies certain technical hypotheses.  (We
refer the reader to~\cite{emerton-helm}, Theorem 1.1.1 for the precise result.)  We further conjecture that such a representation $\pi(\rho)$ exists
for any $\rho$.

The paper~\cite{bernstein3} gives an approach to the question of actually constructing $\pi(\rho)$ from $\rho$.  The key new idea
is the introduction of the integral Bernstein center, which is by definition the center of the category of smooth $W(k)[\GL_n(F)]$-modules.
More prosaically, the integral Bernstein center is a ring $Z$ that acts on every smooth $W(k)[\GL_n(F)]$-module, compatibly with every morphism
between such modules, and is the universal such ring.  The structure of $Z$ encodes deep information about ``congruences'' between 
$W(k)[\GL_n(F)]$-modules (for instance, if two irreducible representations of $\GL_n(F)$ in characteristic zero become isomorphic modulo $\ell$,
the action of $Z$ on these two representations will be via scalars that are congruent modulo $\ell$.)

Morally, the problem of showing that $\pi(\rho)$ exists for all $\rho$ amounts to showing- for a sufficiently general notion of
``congruence''- that whenever there is a congruence between two representations
of $G_F$, there is a corresponding congruence on the other side of the local Langlands correspondence.  It is therefore not
surprising that one can rephrase the problem of constructing $\pi(\rho)$ in terms of the structure of $Z$.
Indeed, Theorem 7.4 of~\cite{bernstein3} reduces the question of the existence of $\pi(\rho)$ to a conjectured relationship
between the ring $Z$ and the deformation theory of mod $\ell$ representations of $G_F$ (Conjecture 7.2 of~\cite{bernstein3}.)

The primary goal of this paper, together with its companion paper~\cite{converse}, is to prove a version of this conjecture, and
thus establish the local Langlands correspondence in families.  More precisely, we introduce a collection of finite type $W(k)$-algebras
$R_{\nu}$ that parameterize representations of the Weil group $W_F$ of $F$ with fixed restriction to prime-to-$\ell$ inertia, and whose
completion at a given maximal ideal is a close variant of a universal framed deformation ring.  We then conjecture that there is a map
$Z \rightarrow R_{\nu}$ that is ``compatible with local Langlands'' in a certain technical sense (see Conjecture~\ref{conj:weak} below for
a precise statement and discussion.)  This conjecture, which we will henceforth call the ``Weak Conjecture'', becomes Conjecture 7.2
of~\cite{bernstein3} after one completes $R_{\nu}$ at a maximal ideal, and hence implies both that conjecture and the existence of $\pi(\rho)$.

If a map $Z \rightarrow R_{\nu}$ of the conjectured sort exists it is natural to ask what the image is.  The ``Strong Conjecture'' 
(Conjecture~\ref{conj:strong}
below) gives a description of this image (and in fact gives a description of the direct factors of $Z$ in purely Galois-theoretic terms.)  As the
names suggest, the ``Strong Conjecture'' implies the ``Weak Conjecture.''

The main result of this paper is that if the weak conjecture holds for all $\GL_m(F)$, with $m$ less than or equal to a fixed $n$, and the strong conjecture
holds for $m < n$, then the strong Conjecture
holds as well for the group $\GL_n(F)$.  In the companion paper~\cite{converse}, we will show that the strong conjecture for $\GL_{n-1}(F)$ implies
the weak conjecture for $\GL_n(F)$.  Since the case $n=1$ is easy (it is a consequence of local class field theory), the two papers together will
establish both conjectures for all $n$, and hence the local Langlands correspondence for $\GL_n$ in families.

Our approach relies on three main ingredients.  The first is an input from finite group theory, namely the endomorphism ring of the Gelfand-Graev
representation $\overline{\Gamma}$ of $\GL_n(\FF_q)$.  In section~\ref{sec:finite} we introduce this ring and describe some of its basic properties, following
Bonnaf{\'e}-Kessar~\cite{gelfand-graev}.  A crucial structure on this endomorphism ring is its canonical symmetrizing form, which Bonnaf{\'e}-Keessar describe
in terms of ``Curtis homomorphisms'' arising from Deligne-Lusztig restriction.  In section~\ref{sec:bernstein} we describe the connection between this endomorphism
ring and the ring $Z$.

The second key ingredient is the behavior of the integral Bernstein center $Z$ with respect to parabolic induction; for a Levi $M$ of $G$
there are natural maps $Z \rightarrow Z_M$ compatible, in a certain sense, with parabolic induction from $M$ to $G$.  In section~\ref{sec:bernstein}
we recall results of~\cite{bernstein1} (c.f. Theorems~\ref{thm:bernstein ind} and~\ref{thm:near saturation}, below) that say that in certain key 
cases the images of these maps are ``large'' in a certain sense, and that
the failure of these maps to have image that is ``as large as possible'' is controlled by the endomorphism ring of a Gelfand-Graev representation.

The third key ingredient is the construction of the rings $R_{\nu}$
which occupies sections~\ref{sec:tame}, \ref{sec:deformations}, and \ref{sec:global galois}.  These moduli spaces admit maps between them coming from taking direct sums of representations;
these maps serve a purpose analogous to the ``parabolic induction'' maps from $Z$ to $Z_M$.  The functions on such spaces also admit subalgebras $B_{q,n}$ that play a role
analogous to the subalgebras of $Z$ arising from the endomorphism ring $\overline{E}_{q,n}$ of a Gelfand-Graev representation.  The strong conjecture
leads us to expect that in fact $\overline{E}_{q,n}$ and $B_{q,n}$ are isomorphic, but it seems difficult to show this directly (although it is easy to show if
one inverts $\ell$).  Instead, we make use of the symmetrizing form on $\overline{E}_{q,n}$ to show that {\em if} there exists a map from $\overline{E}_{q,n}$ to $B_{q,n}$ then it must be
an isomorphism (c.f. sections~\ref{sec:inertial} and \ref{sec:E-B}.)

Once we have established this, our argument goes as follows.  First we show that the strong conjecture holds after inverting $\ell$; this essentially follows
easily from the classical Bernstein-Deligne theory of the Bernstein center over algebraically closed fields.  We then assume the strong conjecture for
$m < n$, and the weak conjecture for $m \leq n$.  This gives us in particular a map $\overline{E}_{q,n} \rightarrow B_{q,n}$ that is necessarily an isomorphism.
Using this, and considering various parabolic restriction maps from $Z$ to various Levi subgroups, together with the corresponding maps on the rings $R_{\nu}$
of representations of $W_F$, we show, using our ``large image'' results for $Z$, that $Z$ must ``fill out'' the entire ring of invariant functions in $R_{\nu}$,
thus proving the strong conjecture for $\GL_n$.

In the process of carrying out this inductive argument we prove that $\overline{E}_{q,n}$ is isomorphic to $B_{q,n}$ for all $n$.  This is a statement purely in finite group
theory that is of independent interest.  We know of no more direct proof of this isomorphism than the one described here.

Throughout this paper we adopt the following conventions: $F$ is a $p$-adic field with residue field $\FF_q$, $k$ is an algebraically
closed field of characteristic $\ell \neq p$, $\CK$ is the field of fractions of $W(k)$, and $\overline{\CK}$ is an algebraic closure of $\CK$.
Algebraic groups over $F$ with be denoted by uppercase mathcal letters $\CT$, $\CG$, etc.; for any such group the corresponding uppercase letters
$T$, $G$, etc. will denote the groups of $F$-points of $\CT$, $\CG$, and so forth.  In particular there is an implicit dependence of $T$ on
$\CT$.

{\em Acknowledgements}
We are grateful to Jean-Francois Dat, Robert Kurinczuk, Vincent Secherre, David Ben-Zvi, and Richard Taylor for helpful conversations and suggestions, and to Gil Moss for his
comments on an earlier draft of this paper.  We are also deeply indebted to Jack Shotton for noticing a serious error in an earlier version of this paper,
and for bringing to our attention the argument of Proposition 7.10 of~\cite{shotton}, which proved crucial to correcting this error.
This research was
partially supported by NSF grant DMS-1161582 and EPSRC grant EP/M029719/1.

\section{Finite groups} \label{sec:finite}

Before beginning our study of the Bernstein center we develop some finite group theory that will be essential for our approach.
Most of the ideas in this section originally appear in work of Bonnaf{\'e}-Kessar~\cite{gelfand-graev}.

Fix distinct primes $p$ and $\ell$, and a power $q$ of $p$.
Let $\CGbar$ be the group $\GL_n$ over $\FF_q$, and let $\overline{G} = \CGbar(\FF_q)$. 
We will consider the representation theory of
$\overline{G}$ over the Witt ring $W(k)$, where $k$ is an algebraic closure of 
$\FF_{\ell}$.  Let $\CK$ be the field of fractions of $W(k)$, and fix an algebraic
closure $\overline{\CK}$ of $\CK$.

Our principal object of study in this section will be the Gelfand-Graev representation
$\overline{\Gamma}$ of $\overline{G}$, with coefficients in $W(k)$.  
Fix a Borel $\CBbar$ in $\CGbar$, with unipotent radical $\CUbar$,
and let $\overline{B}$, $\overline{U}$ denote the $\FF_q$-points of $\CBbar$ and $\CUbar$
respectively.  Also fix a generic character $\Psi: \overline{U} \rightarrow
W(k)^{\times}$.  Then, by definition, we have $\overline{\Gamma} = \cInd_{\overline{U}}^{\overline{G}} \Psi$,
where $\Psi$ is considered as a $W(k)[\overline{U}]$-module that is free over $W(k)$ of rank one,
with the appropriate action of $\overline{U}$.  The module $\overline{\Gamma}$ is then independent
of the choice of $\Psi$, up to isomorphism.

The objective of this first section is to study the endomorphism ring 
$\End_{W(k)[\overline{G}]}(\overline{\Gamma})$, which we denote by $\overline{E}_{q,n}$.
Our main tool for doing so will be the Deligne-Lusztig
induction and restriction functors of Bonnaf{\'e}-Rouquier~\cite{BR}.  Let $\overline{L}$ be the subgroup
of $\overline{G}$ consisting of the $\overline{\FF}_q$-points of a (not necessarily split)
Levi subgroup $\CLbar$ of $\GL_n$, and choose a parabolic subgroup $\CPbar$ of $\GL_n$ whose Levi subgroup is $\CLbar$.
Let $\Rep_{W(k)}(\overline{G})$ and $\Rep_{W(k)}(\overline{L})$ denote the categories of
$W(k)[\overline{G}]$-modules and $W(k)[\overline{L}]$-modules, respectively.  Then Deligne-Lusztig
induction and restriction are functors:
$$i_{\CLbar \subseteq \CPbar}^{\overline{G}}: {\mathcal D}^b(\Rep_{W(k)}(\overline{L})) \rightarrow
{\mathcal D}^b(\Rep_{W(k)}(\overline{G}))$$
$$r^{\CLbar \subseteq \CPbar}_{\overline{G}}: {\mathcal D}^b(\Rep_{W(k)}(\overline{G})) \rightarrow
{\mathcal D}^b(\Rep_{W(k)}(\overline{L})).$$

We will be concerned exclusively with the case where $\CLbar$ is a maximal torus in $\CGbar$.
In this case the effect of Deligne-Lusztig
restriction on $\overline{\Gamma}$ has been described by Bonnaf{\'e}-Rouquier when $\CLbar$ is a Coxeter torus
and by Dudas~\cite{dudas} in general.

\begin{theorem}[Bonnaf{\'e}-Rouquier, Dudas] \label{thm:dudas}
When $\CLbar$ is the standard maximal torus, there is a natural isomorphism:
$$r^{\CLbar \subseteq \CPbar}_{\overline{G}} \overline{\Gamma} \cong W(k)[\overline{L}][-\ell(w)]$$
in ${\mathcal D}^b(\Rep_{W(k)}(\overline{L}))$, where $w$ is the element of the Weyl group of $\CGbar$ such
that $\CPbar^w$ is the standard Borel, $\ell(w)$ is its length, and $[-\ell(w)]$ denotes a cohomological shift.
\end{theorem}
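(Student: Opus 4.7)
The plan is to reduce Theorem~\ref{thm:dudas} to the already established Coxeter case of Bonnaf\'e--Rouquier by passing through an intermediate rational split Levi and applying transitivity of Deligne--Lusztig restriction.

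First I would use the classification of maximal tori of $\CGbar = \GL_n$ over $\FF_q$: up to $\overline{G}$-conjugacy, a torus $\CLbar$ corresponds to a conjugacy class of Weyl elements $w \in S_n$; if $w$ has cycle type $\lambda = (\lambda_1, \ldots, \lambda_r)$, then $\CLbar$ embeds as a Coxeter (elliptic) torus in the rational split Levi $\overline{\CM} = \prod_i \GL_{\lambda_i}$ of $\CGbar$, and $\ell(w) = \sum_i (\lambda_i - 1)$ is the sum of the lengths of the Coxeter elements of the factors $S_{\lambda_i}$.

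Second, I would choose a rational parabolic $\overline{\CQ} \subseteq \CGbar$ with Levi $\overline{\CM}$ and a parabolic $\CPbar'$ of $\overline{\CM}$ with Levi $\CLbar$ such that the transitivity formula for Deligne--Lusztig restriction gives
$$r^{\CLbar \subseteq \CPbar}_{\overline{G}} \;\cong\; r^{\CLbar \subseteq \CPbar'}_{\overline{M}} \circ r^{\overline{\CM} \subseteq \overline{\CQ}}_{\overline{G}},$$
where $\overline{M}$ denotes the $\FF_q$-points of $\overline{\CM}$. Because $\overline{\CM}$ is a Levi of the rational parabolic $\overline{\CQ}$, the outer (Galois-theoretic) Deligne--Lusztig restriction collapses to ordinary Harish--Chandra restriction, and I would invoke an integral, Rodier-type theorem to identify
$$r^{\overline{\CM} \subseteq \overline{\CQ}}_{\overline{G}} \overline{\Gamma} \;\cong\; \overline{\Gamma}_{\overline{M}} \;\cong\; \bigotimes_i \overline{\Gamma}_{\lambda_i},$$
where $\overline{\Gamma}_{\lambda_i}$ is the Gelfand--Graev module of $\GL_{\lambda_i}(\FF_q)$. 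Since Deligne--Lusztig restriction is compatible with products of reductive groups, and $\CLbar$ splits as $\prod_i \CLbar_i$ with each $\CLbar_i$ a Coxeter torus of $\GL_{\lambda_i}$, I can then apply the Bonnaf\'e--Rouquier theorem to each factor to obtain $W(k)[\overline{L}_i](-(\lambda_i - 1))$, and the external tensor product produces $W(k)[\overline{L}](-\ell(w))$ as desired.

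The main obstacle I anticipate is the integral form of the Rodier-type identification in the middle step. Over $\overline{\CK}$ the isomorphism $r^{\overline{\CM} \subseteq \overline{\CQ}}_{\overline{G}} \overline{\Gamma} \cong \overline{\Gamma}_{\overline{M}}$ follows quickly from Harish--Chandra's classical calculation, but we need an isomorphism in ${\mathcal D}^b(\Rep_{W(k)}(\overline{M}))$. This requires a careful Mackey decomposition of $\overline{G}$ into $\overline{U}$--$\overline{Q}$ double cosets and a proof that every non-open coset contributes nothing even at the derived, integral level, because the restriction of the generic character $\Psi$ to the appropriate unipotent subgroup on that coset is non-trivial in a cohomologically rigid way. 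Establishing this integral vanishing is essentially the content of Dudas's extension of Bonnaf\'e--Rouquier from the Coxeter torus to an arbitrary maximal torus.
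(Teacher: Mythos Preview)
The paper does not give its own proof of this statement: its entire proof is the sentence ``This is the main theorem of~\cite{dudas}.'' So there is no argument in the paper to compare your sketch against; the result is imported as a black box.

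That said, your outline is a reasonable sketch of one route to the theorem, and you have correctly identified where the real work lies. The reduction via a rational Levi $\overline{\CM}$ containing $\CLbar$ as a product of Coxeter tori, plus transitivity of Deligne--Lusztig restriction (with the outer step collapsing to Harish--Chandra restriction because $\overline{\CQ}$ is $\FF_q$-rational), is sound in shape. The genuine content, as you say, is the integral identification $r^{\overline{\CM} \subseteq \overline{\CQ}}_{\overline{G}} \overline{\Gamma} \cong \overline{\Gamma}_{\overline{M}}$ in $\mathcal{D}^b(\Rep_{W(k)}(\overline{M}))$; this is not difficult here because the parabolic is rational and the Jacquet functor is exact, so the Mackey/double-coset argument goes through integrally without derived subtleties. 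One should also note that transitivity of Deligne--Lusztig restriction at the derived level (even when one of the two steps is Harish--Chandra) is itself a result that needs to be invoked, not just asserted. In any case, since the paper is content to cite Dudas, your proposal already goes further than what is required for the exposition here.
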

\begin{proof}
This is the main theorem of~\cite{dudas}.
\end{proof}

An immediate consequence of this result is that, when $\overline{T}$ is the $\overline{\FF}_q$-points of
a torus in $\GL_n$, then an endomorphism of $\overline{\Gamma}$ gives rise, by functoriality of
Deligne-Lusztig restriction, to an endomorphism of $W(k)[\overline{T}]$ (or, equivalently,
an element of $W(k)[\overline{T}]$.)  We thus obtain homorphisms:
$$\Phi_{\overline{T}}: \overline{E}_{q,n} \rightarrow W(k)[\overline{T}]$$
for each torus $\CTbar$ in $\CGbar$.  These are integral versions of the classical ``Curtis homomorphisms''.

Over $\overline{\CK}$, it is not difficult to describe 
the structure of $\overline{\Gamma} \otimes \overline{\CK}$, its endomorphism ring,
and the associated Curtis homomorphisms. Recall
that an irreducible representation $\pi$ of $\overline{G}$ is said to be {\em generic} if $\pi$
contains the character $\Psi$, or, equivalently, if there exists a nonzero map from
$\overline{\Gamma}$ to $\pi$.   The irreducible generic representations of $\overline{G}$
over $\overline{\CK}$ are indexed by semisimple conjugacy classes $s$ in $\overline{G}'$,
where $\overline{G}'$ is the group of $\overline{\FF}_q$-points in
the group $\CGbar'$ that is dual to $\CGbar$.  More precisely, given such an
$s$, there exists a unique irreducible generic representation $\overline{\St}_s$ in the
rational series attached to $s$.  

The association of rational series to semisimple conjugacy classes in $\overline{G}'$ depends
on choices which we now recall: let $\mu^{(p)}$ denote the prime-to-$p$ roots of unity in $\overline{\CK}$,
let $(\QQ/\ZZ)^{(p)}$ denote the elements of order prime to $p$ in $(\QQ/\ZZ)$, and fix isomorphisms:
$$\mu^{(p)} \cong (\QQ/\ZZ)^{(p)} \cong \overline{\FF}_q^{\times}.$$

Now let $t$ be a semisimple element in $\overline{G}'$, let $\CTbar'$ be a maximal
torus containing $s$, and let $\CTbar$ be the dual torus in $\overline{G}$.  Let
$X$ and $X'$ denote the character groups of $\CTbar$ and $\CTbar'$ respectively.
We have isomorphisms:
$$\CTbar(\FF_q) \cong \Hom(X/(\Fr_q - 1)X, \GG_m)$$
$$\CTbar'(\FF_q) \cong \Hom(X'/(\Fr_q - 1)X', \GG_m)$$
where $\Fr_q$ is the endomorphism induced by the $q$-power Frobenius.
We also have a natural duality $X/(\Fr_q - 1)X \cong \Hom(X'/(\Fr_q - 1)X', (\QQ/\ZZ)^{(p)})$.
The identifications we fixed above then give rise to isomorphisms:
$$\CTbar'(\FF_q) \cong \Hom(X'/(\Fr_q - 1)X', \GG_m) \cong X/(\Fr_q - 1)X \cong \Hom(\CTbar(\FF_q), \mu^{(p)}).$$
In this way we associate, to any semisimple element $t$ of $\CGbar'(\FF_q)$, and any $\CTbar'$ containing $t$,
a character $\varphi_{\CTbar',t}: \CTbar(\FF_q) \rightarrow \overline{\CK}^{\times}$.

It is immediate (by applying the idempotent of $\overline{\CK}[\overline{G}]$ corresponding to the
rational series attached to $s$ to Theorem~\ref{thm:dudas}) that we then have: 
\begin{prop}
Let $\CTbar$ be a maximal torus of $\CGbar$, and let $\CBbar$ be a Borel containing $\CTbar$.
Then, up to a cohomological shift depending only on $\CBbar$, we have:
$$r^{\CTbar \subseteq \CBbar}_{\overline{G}} \overline{St}_s \cong \bigoplus_{t \sim s; t \in \overline{T}'} \varphi_{\CTbar',t}.$$
\end{prop}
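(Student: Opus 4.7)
The plan is to derive this statement by extracting the $s$-isotypic component from Theorem~\ref{thm:dudas} after extending scalars to $\overline{\CK}$. First, I would recall the Broué--Michel / Bonnafé decomposition: the category $\Rep_{\overline{\CK}}(\overline{G})$ splits as a product over semisimple conjugacy classes $s$ in $\overline{G}'$ of the subcategories of representations in the rational Lusztig series $\mathcal{E}(\overline{G}, s)$, and Deligne--Lusztig induction and restriction respect this decomposition (so that $r^{\CTbar \subseteq \CBbar}_{\overline{G}}$ carries the $s$-part on $\overline{G}$ to the ``$s$-part'' on $\CTbar$, namely the sum over characters $\theta$ of $\overline{T}$ whose associated semisimple class is $s$).

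Next I would identify the $s$-components of both sides of Theorem~\ref{thm:dudas}, now after inverting $\ell$. Over $\overline{\CK}$ the Gelfand--Graev representation $\overline{\Gamma}$ is multiplicity-free and each irreducible constituent is generic; since there is exactly one generic representation per rational series, we have the decomposition
$$\overline{\Gamma} \otimes_{W(k)} \overline{\CK} \;\cong\; \bigoplus_{s} \overline{\St}_s,$$
so the $s$-idempotent applied to $\overline{\Gamma} \otimes \overline{\CK}$ picks out precisely $\overline{\St}_s$. On the other side, $W(k)[\overline{T}] \otimes_{W(k)} \overline{\CK}$ decomposes as the direct sum of its characters $\theta \colon \overline{T} \to \overline{\CK}^\times$, and by the construction of $\theta_{\CTbar',t}$ recalled in the excerpt, these characters are in bijection with elements $t \in \overline{T}'$; the $s$-part is therefore the sum over those $t \in \overline{T}'$ whose $\overline{G}'$-conjugacy class is $s$.

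Finally, I would apply the functor $r^{\CTbar \subseteq \CBbar}_{\overline{G}}$ to the $s$-component of $\overline{\Gamma} \otimes \overline{\CK}$, use Theorem~\ref{thm:dudas} (extended to $\overline{\CK}$-coefficients) and the compatibility of DL restriction with the rational series decomposition to obtain
$$r^{\CTbar \subseteq \CBbar}_{\overline{G}} \overline{\St}_s \;\cong\; \Big(\bigoplus_{t \sim s,\; t \in \overline{T}'} \theta_{\CTbar',t}\Big)(-\ell(w)),$$
which is the claimed formula with cohomological shift $-\ell(w)$ depending only on the $\overline{G}$-conjugacy class of $\CBbar$.

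The main obstacle, and really the only non-formal point, is the bookkeeping that matches the character $\theta_{\CTbar',t}$ defined combinatorially via the duality $X/(\Fr_q - 1)X \cong \Hom(X'/(\Fr_q - 1)X', (\QQ/\ZZ)^{(p)})$ with the Lusztig-theoretic assignment of a rational series to the pair $(\CTbar, \theta)$. This is where the fixed isomorphisms $\mu^{(p)} \cong (\QQ/\ZZ)^{(p)} \cong \overline{\FF}_q^\times$ enter and need to be tracked carefully; once that identification is in place, the proposition follows directly by applying the appropriate central idempotent to Theorem~\ref{thm:dudas}.
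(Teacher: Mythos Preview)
Your proposal is correct and is exactly the approach the paper takes: the paper states that the proposition is ``immediate (by applying the idempotent of $\overline{\CK}[\overline{G}]$ corresponding to the rational series attached to $s$ to Theorem~\ref{thm:dudas})'', and you have simply spelled out what that means on each side. The only thing you add beyond the paper's one-line parenthetical is the explicit bookkeeping about the Lusztig series decomposition and the identification of characters of $\overline{T}$ with elements of $\overline{T}'$, which is helpful but not a different method.
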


Returning to $\overline{\Gamma}$, we have a direct sum decomposition:

$$\overline{\Gamma} \otimes \overline{\CK} \cong \bigoplus_s \overline{\St}_s$$

It follows immediately that the endomorphism ring of $\overline{\Gamma} \otimes \overline{\CK}$
is isomorphic to a product of copies of $\overline{\CK}$, indexed by the semisimple conjugacy
classes $s$ in $\overline{G}'$.  As the endomorphism ring $\End_{W(k)[\overline{G}]}(\overline{\Gamma})$
of $\overline{\Gamma}$ embeds in this product, we see immediately that
$\End_{W(k)[\overline{G}]}(\overline{\Gamma})$ is reduced and commutative.  

Indeed, it is not difficult to describe the maps $\Phi_{\overline{T}} \otimes \overline{\CK}$.
The isomorphism:
$$\overline{\Gamma} \otimes \overline{\CK} \cong \bigoplus_s \overline{St}_s$$
where $s$ runs over semisimple conjugacy classes in $\overline{G}'$,
gives rise to an isomorphism:
$$\overline{E}_{q,n} \otimes \overline{\CK} \cong \prod_s \overline{\CK}.$$
On the other hand we have a direct sum decomposition:
$$\overline{\CK}[\overline{T}] \cong \bigoplus_t \varphi_{\CTbar,t}$$
of $\overline{\CK}[\overline{T}]$-modules, and hence an algebra isomorphism:
$$\overline{\CK}[\overline{T}] \cong \prod_t \overline{\CK}.$$
It follows immediately from the previous paragraph that $\Phi_{\overline{T}}$
maps the factor of $\overline{\CK}$ of $\overline{E}_{q,n} \otimes \overline{\CK})$ corresponding to
$s$ identically to each factor of $\overline{\CK}[\overline{T}]$ that corresponds to a $t$ in
the $\overline{G}'$-conjugacy class $s$, and to zero in the other factors.
%%As a consequence, we deduce that the image of $\Phi_{\overline{T}}$ is invariant under the action
%%of the normalizer of $\overline{T}$ under conjugation.

Now let $\CTbar$ range over all tori in $\overline{G}$, and consider the product map:
$$\Phi: \overline{E}_{q,n} \rightarrow
\prod_{\CTbar} W(k)[\overline{T}].$$
For each pair $(\CTbar,\varphi)$, where $\varphi$ is a character $\overline{T} \rightarrow \overline{\CK}^{\times}$, we have a map:
$$\xi_{\CTbar,\varphi}: \prod_{\overline{T}} W(k)[\overline{T}] \rightarrow \overline{\CK}$$
given by composing the projection onto $W(k)[\overline{T}]$ with the map:
$\varphi: W(k)[\overline{T}] \rightarrow \CK$.

Define an equivalence relation on such pairs by setting $(\CTbar_1,\varphi_1) \sim (\CTbar_2,\varphi_2)$
if $t_1$ and $t_2$ are conjugate in $\overline{G}'$, where $t_1$ and $t_2$ are the elements of the dual
tori $\CTbar'_1$ and $\CTbar'_2$ corresponding to $\varphi_1$ and $\varphi_2$.  Then our description of each $\Phi_{\overline{T}}$
shows that, when $(\CTbar_1,\varphi_1) \sim (\CTbar_2,\varphi_2)$, one has
$\xi_{\CTbar_1,\varphi_1} \circ \Phi = \xi_{\CTbar_2,\varphi_2} \circ \Phi$.  
Thus $\Phi$ induces a bijection between the $\overline{\CK}$-points of $\Spec \overline{E}_{q,n}$
and the equivalence classes of pairs $(\CTbar,\varphi)$.

In what follows, it will be necessary for us to consider certain direct factors of $\overline{E}_{q,n}$ arising from
idempotents of $W(k)[\overline{G}]$.  An $\ell$-regular semisimple conjugacy class $s$ in $\overline{G}'$ gives rise, via
the choices we have made above, to an idempotent $e_s$ in $W(k)[\overline{G}]$, that acts by the identity
on the rational series corresponding to those $s'$ in $\overline{G}$ with $\ell$-regular part $s$, and zero elsewhere.
We will denote by $\overline{E}_{q,n,s}$ the direct factor $e_s \overline{E}_{q,n}$ of $\overline{E}_{q,n}$. 
The $\overline{\CK}$-points of $\Spec \overline{E}_{q,n,s}$ are those corresponding to pairs $(\CTbar,\varphi)$
such that $\varphi$ corresponds to an element $t$ of $\CTbar'$ whose $\ell$-regular part is $s$.

Now let $s \in \overline{G}'$ be $\ell$-regular semisimple and suppose that the characteristic polynomial of $s$ is a power of an irreducible polynomial
of degree $d$.  Then the centralizer $\CLbar'$ of $s$ in $\CGbar'$ is a nonsplit Levi isomorphic to $\Res_{\FF_{q^d}/\FF_q} \GL_{\frac{n}{d}}$.  Let $\CLbar$ be the
Levi of $\CGbar$ dual to $\CLbar'$.  By Th{\'e}or{\`e}me 11.8 of~\cite{BR}, twisting by the character of $\overline{L}$ associated to $s$, followed by Deligne-Lusztig induction
from $\CLbar$ to $\CGbar$, is an equivalence of categories from $e_1\Rep_{W(k)}(\overline{L})$ to $e_s\Rep_{W(k)}(\overline{G})$.  Moreover, this equivalence carries
$e_1 \overline{\Gamma}_{\CLbar}$ to $e_s \overline{\Gamma}$.  (This follows from uniqueness of projective envelopes,
since the former is the projective envelope of the unique irreducible generic $k$-representation of  
$\overline{L}$ in the block corresponding to $e_1$, and the latter is the projective envelope of the unique irreducible generic $k$-representation of $\overline{G}$
in the block corresponding to $e_s$.)  We thus have:

\begin{proposition} \label{prop:E jordan}
For $s$ an $\ell$-regular semisimple element of $\overline{G}'$ whose characteristic polynomial is a power of an irreducible polynomial of degree $d$ over $\FF_q$.
Then there is a natural isomorphism:
$$\overline{E}_{q,n,s} \cong \overline{E}_{q^d,\frac{n}{d},1}.$$
The induced map on $\overline{\CK}$-points takes the $\overline{\CK}$-point of $\Spec \overline{E}_{q^d,\frac{n}{d},1}$ corresponding to the $\ell$-primary conjugacy class
$t$ of $\overline{L}'$ to the $\overline{\CK}$-point of $\Spec \overline{E}_{q,n,s}$ corresponding to the conjugacy class of $st$ in $\overline{G}'$.
\end{proposition}
\begin{proof}
The first claim is immediate from the previous paragraph.  The second follows from the description of the equivalence of categories on irreducible generic $\overline{\CK}$-representations.
\end{proof}

The final structure we will need to consider on $\overline{E}_{q,n}$ is a natural symmetrizing form considered by
Bonnaf{\'e}-Kessar (\cite{gelfand-graev}, section 3.B).  Define a $W(k)$-linear map $\theta: \overline{E}_{q,n} \rightarrow W(k)$ by the formula:
$$\theta(x) = \frac{1}{n!} \sum\limits_{w \in S_n} \theta_w(\Phi_{\CTbar_w}(x)),$$
where $\CTbar_w$ is the torus of $\CGbar$ associated to the element $w$ of the Weyl group, and $\theta_w: W(k)[\overline{T}_w] \rightarrow W(k)$
is the canonical symmetrizing form on $W(k)[\overline{T}_w]$ given by ``evaluation at the identity''.  Note that we can extend $\theta$
to a linear map $\overline{E}_{q,n} \otimes \overline{\CK} \rightarrow \overline{\CK}$.

We then have:
\begin{proposition} \label{prop:E-symmetrizing}
Let $t$ be a semisimple conjugacy class in $\overline{G}'$, and let $e_t$ be the corresponding idempotent of $\overline{E}_{q,n} \otimes \overline{\CK}$.
Then 
$$\theta(e_t) = \frac{1}{n!} \sum\limits_{w \in S_n} \frac{1}{\# \overline{T}_w} N(w,t),$$
where $N(w,t)$ is the number of elements of $\overline{T}'_w$ in the conjugacy class of $t$.
\end{proposition}
\begin{proof}
It is easy to see that $\Phi_{\CTbar_w}(e_t)$ is equal to the sum, over those $t' \in \CTbar'_w$ conjugate to $t'$, of the idempotents $e_{t'}$
of $\overline{\CK}[\overline{T}_w]$.  The claim is then immediate from the formula for $\theta$.
\end{proof}

\section{The integral Bernstein center} \label{sec:bernstein}

We now turn to the first main object of interest in this paper: the integral
Bernstein center.  Let $G = \GL_n(F)$, and
denote by $\Rep_{W(k)}(G)$ (resp. $\Rep_{\overline{\CK}}(G)$) the category
of smooth $W(k)[G]$-modules (resp. the category of smooth $\overline{\CK}[G]$-modules.)

By the phrase ``integral Bernstein center'' we mean the center of the category
$\Rep_{W(k)}(G)$.  We recall what this means:

\begin{defn} The {\em center} of an Abelian category $\CA$ is the ring of
natural transformations $\Id_{\CA} \rightarrow \Id_{\CA}$, where $\Id_{\CA}$
denotes the identity functor on $\CA$.
\end{defn}

By definition, if $Z$ is the center of $\CA$, then specifying an element of $Z$
amounts to specifying an endomorphism of every object of $\CA$, such that the
resulting collection commutes with all arrows in $\CA$.  The center of $\CA$ is
thus a commutative ring that acts naturally on every object in $\CA$, and this action
is compatible with all morphisms in $\CA$.

Bernstein-Deligne~\cite{BD}, give a complete and explicit description
of the center $\tZ$ of $\Rep_{\overline{\CK}}(G)$.  We briefly summarize their results:
first, define an equivalence relation on pairs $(M,\tpi)$, where $M$ is a Levi of $G$
and $\pi$ is an irreducible supercuspidal representation of $M$ over $\overline{\CK}$
by declaring $(M_1,\tpi_1)$ to be {\em inertially equivalent} to $(M_2,\tpi_2)$ if $\tpi_1$
is $G$-conjugate to an unramified twist of $\tpi_2$.  One then has:

\begin{thm}[\cite{BD}, Proposition 2.10]
There is a bijection $(M,\tpi) \mapsto e_{(M,\tpi)}$ between inertial equivalence classes
of pairs $(M,\tpi)$ over $\overline{\CK}$ and primitive idempotents of $\tZ$, such that
for any irreducible smooth representation $\Pi$ of $G$ over $\overline{\CK}$
$e_{(M,\tpi)}$ acts via the identity on $\Pi$ if $\Pi$ has supercuspidal support in the
inertial equivalence class of $(M,\tpi)$, and by zero otherwise.
\end{thm}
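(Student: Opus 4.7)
The plan is to deduce this from the Bernstein decomposition of $\Rep_{\overline{\CK}}(G)$ together with the theory of supercuspidal support. First I would establish that every irreducible smooth $\Pi$ of $G$ over $\overline{\CK}$ has a well-defined supercuspidal support: there exists a Levi $M$, a parabolic $P$ with Levi $M$, and an irreducible supercuspidal $\tpi$ of $M$ such that $\Pi$ embeds as a subquotient of the (normalized) parabolic induction $i_P^G \tpi$, and the pair $(M,\tpi)$ is unique up to $G$-conjugacy. This is standard and follows from Jacquet's theorem on the existence of cuspidal subquotients of Jacquet modules together with Frobenius reciprocity.

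Second, I would prove the Bernstein decomposition
\[
\Rep_{\overline{\CK}}(G) \;=\; \prod_{[(M,\tpi)]} \Rep_{\overline{\CK}}(G)_{[(M,\tpi)]},
\]
where $[(M,\tpi)]$ runs over inertial equivalence classes and $\Rep_{\overline{\CK}}(G)_{[(M,\tpi)]}$ is the full subcategory of smooth representations all of whose irreducible subquotients have supercuspidal support in $[(M,\tpi)]$. The key input is that for a fixed inertial class, unramified twists of $\tpi$ move in an algebraic family (a torus quotient), so parabolic induction sends all such twists into the same component; conversely one shows that Hom and Ext between representations with supercuspidal supports in different inertial classes vanish, using second adjointness for parabolic induction/restriction and analysis of Jacquet modules. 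This orthogonality is the main obstacle and the heart of the Bernstein-Deligne argument.

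Third, once the decomposition is in place, each factor contributes a central idempotent $e_{(M,\tpi)} \in \tZ$, namely the natural transformation that is the identity on the $(M,\tpi)$-factor and zero elsewhere. By construction $e_{(M,\tpi)}$ acts as the identity on an irreducible $\Pi$ precisely when the supercuspidal support of $\Pi$ lies in $[(M,\tpi)]$, and as zero otherwise; distinct inertial classes give orthogonal idempotents. This establishes the existence and distinctness part of the bijection.

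Finally, to show that each $e_{(M,\tpi)}$ is primitive and that these exhaust the primitive idempotents, I would identify the center of each factor $\Rep_{\overline{\CK}}(G)_{[(M,\tpi)]}$. By a further argument of Bernstein (using the progenerator $i_P^G(\cInd_{M^\circ}^M \tpi)$, where $M^\circ$ is a suitable open compact-mod-center subgroup on which $\tpi$ is defined), this center is the ring of regular functions on the quotient of the torus of unramified characters of $M$ by the finite group $W(M,\tpi) = \{w \in N_G(M)/M : w\tpi \cong \tpi \otimes \chi \text{ for some unramified } \chi\}$. Since this is an integral domain (a quotient of a torus by a finite group acting on it is an irreducible variety), it admits no nontrivial idempotents, so $e_{(M,\tpi)}$ is primitive. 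Together with the block decomposition, this gives the claimed bijection.
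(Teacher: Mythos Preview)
The paper does not give a proof of this statement; it is quoted directly from \cite{BD}, Proposition 2.10, as background. Your outline is a faithful sketch of the Bernstein--Deligne argument and is correct in its broad strokes, so there is nothing in the paper to compare it against. One small imprecision: in your last step the center of the block is $\left(\overline{\CK}[M/M_0]^H\right)^{W_M(\tpi)}$ rather than simply functions on a torus modulo $W(M,\tpi)$; you have omitted the finite group $H$ of unramified self-twists of $\tpi$. This does not affect your conclusion, since $\overline{\CK}[M/M_0]^H$ is still the coordinate ring of a torus and hence its $W_M(\tpi)$-invariants remain an integral domain.
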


The upshot is that $\tZ$ decomposes as an infinite product of the rings $e_{(M,\tpi)} \tZ$
as $(M,\tpi)$ runs over all inertial equivalence classes of pairs.  Denote
$e_{(M,\tpi)} \tZ$ by $\tZ_{(M,\tpi)}$.  Then Bernstein and Deligne give a complete
description of the ring structure of $\tZ_{(M,\tpi)}$ that we now explain.

Let $M_0$ be the smallest subgroup of $M$ containing every compact open subgroup of $M$.  Then
$M/M_0$ is a free abelian group of finite rank, and $\Spec \overline{\CK}[M/M_0]$ is
a torus whose $\overline{\CK}$-points are in bijection with the characters
$M/M_0 \rightarrow \overline{\CK}^{\times}$.  Let $H$ be the subgroup of these characters consisting
of those characters $\chi$ such that $\tpi \otimes \chi$ is isomorphic to $\tpi$.  Then $H$ is a finite
abelian group that acts on $\overline{\CK}[M/M_0]$.  The
torus $\Spec \overline{\CK}[(M/M_0)]^H$ is a quotient of $\Spec \overline{\CK}[M/M_0]$; its $\overline{\CK}$-points
correspond to $H$-orbits of characters of $M/M_0$.

Now let $W_M$ be the subgroup of the Weyl group of $G$ consisting of $w$ such that $wMw^{-1} = M$.  Let
$W_M(\tpi)$ be the subgroup of $W_M$ consisting of $w$ such that the representation $\tpi^w$ of $M$ is
an unramified twist of $\tpi$.  Then we have a natural action of $W_M(\tpi)$ on $\overline{\CK}[(M/M_0)]^H$,
characterized by $\tpi \otimes \chi^w \cong (\tpi \otimes \chi)^w$ for characters $\chi$
of $M/M_0$.  We then have:

\begin{theorem}[\cite{BD}, Th{\'e}or{\`e}me 2.13]
There is a unique natural isomorphism:
$$\tZ_{(M,\tpi)} \cong \left(\overline{\CK}[(M/M_0)]^H\right)^{W_M(\tpi)}$$
such that, for any irreducible representation $\Pi$ over $\overline{\CK}$ whose supercuspidal
support has the form $\tpi \otimes \chi$, $\tZ_{(M,\tpi)}$ acts on $\Pi$ via the map:
$$\left(\overline{\CK}[(M/M_0)]^H\right)^{W_M(\tpi)} \rightarrow \overline{\CK}[M/M_0] \rightarrow \overline{\CK}$$
corresponding to the character $\chi: M/M_0 \rightarrow \overline{\CK}^{\times}$.  In particular
$\tZ_{(M,\tpi)}$ is a reduced, finitely generated, and normal $\overline{\CK}$-algebra.
\end{theorem}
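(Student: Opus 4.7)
The plan is to produce the isomorphism in two stages: first construct a map $(\overline{\CK}[M/M_0]^H)^{W_M(\tpi)} \to \tZ_{(M,\tpi)}$ by specifying the scalars by which its elements act on parabolic inductions, then identify this map with the full center by computing the endomorphism ring of a projective generator of the block.

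For the first stage, view $z \in \overline{\CK}[M/M_0]$ as a regular function on the torus of unramified characters $\chi \colon M/M_0 \to \overline{\CK}^{\times}$. The prescription is that $z$ acts on $i_P^G(\tpi \otimes \chi)$ by the scalar $\chi(z)$. Since $\tpi \otimes \chi_1 \cong \tpi \otimes \chi_2$ exactly when $\chi_1\chi_2^{-1} \in H$, well-definedness forces restriction to $\overline{\CK}[M/M_0]^H$; similarly, for $w \in W_M(\tpi)$ the normalized intertwining operator gives a canonical isomorphism $i_P^G(\tpi \otimes \chi) \cong i_{wPw^{-1}}^G(\tpi \otimes \chi^w)$, which forces $W_M(\tpi)$-invariance. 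Extending these scalar actions to a natural endomorphism of every object of the block is then formal: each finitely generated representation in $\CA_{(M,\tpi)}$ is a quotient of a finite direct sum of $i_P^G(\tpi \otimes \chi_j)$, and the prescription is compatible with all $G$-equivariant maps between such inductions, hence with all subquotients and extensions.

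For the second stage, I would use that $\CA_{(M,\tpi)}$ has a projective generator $\Pi$, so that $\tZ_{(M,\tpi)} = Z(\End_G \Pi)$. Take $\Pi = i_P^G \Pi_\tpi$, where $\Pi_\tpi$ is a progenerator of the supercuspidal block of $M$ containing $\tpi$; parabolic induction preserves projectivity, and $\Pi_\tpi$ covers every unramified twist of $\tpi$. A direct computation gives $\End_M(\Pi_\tpi) \cong \overline{\CK}[M/M_0]^H$, the $H$-action recording the unramified self-twists of $\tpi$. Second adjointness together with the Bernstein-Zelevinsky geometric lemma then identifies $\End_G(\Pi)$ as a crossed-product algebra: it is generated over $\overline{\CK}[M/M_0]^H$ by invertible operators $J_w$, one for each $w \in W_M(\tpi)$, satisfying $J_w z J_w^{-1} = w(z)$. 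The center of such a crossed product is exactly the ring of $W_M(\tpi)$-invariants $(\overline{\CK}[M/M_0]^H)^{W_M(\tpi)}$.

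The characterization of the action on irreducibles, the uniqueness of the isomorphism, and the fact that $\tZ_{(M,\tpi)}$ is reduced, finitely generated and normal are then automatic: any irreducible with supercuspidal support $\tpi \otimes \chi$ is a subquotient of $i_P^G(\tpi \otimes \chi)$, so the scalar action is forced, while the algebraic properties follow from the right-hand description as invariants of a Laurent polynomial algebra under a finite group action. The main obstacle is the crossed-product identification in the second stage: one must show that the $2$-cocycle of $W_M(\tpi)$ governing the multiplication of the $J_w$ is trivial, which amounts to a careful analysis of normalizing constants for intertwining operators. For $\GL_n$ this is tractable because the groups $W_M(\tpi)$ are products of symmetric groups with vanishing Schur multipliers in the relevant range, but it is essentially the technical heart of the proof and is the one step where the structure theory of $\GL_n$ enters essentially.
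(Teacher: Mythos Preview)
The paper does not prove this theorem; it is stated with attribution to Bernstein--Deligne \cite{BD}, Th\'eor\`eme~2.13, and used as a black box. There is therefore no proof in the paper to compare your proposal against.

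That said, your sketch is a reasonable outline of the standard Bernstein--Deligne argument: build a projective generator of the block by parabolically inducing a progenerator of the cuspidal block on $M$, compute its endomorphism ring via the geometric lemma, and read off the center. One small correction: you identify the triviality of the $2$-cocycle on $W_M(\tpi)$ as ``essentially the technical heart of the proof,'' but for the purpose of computing the \emph{center} this is irrelevant. If $B$ is a crossed product of a commutative algebra $A$ by a finite group $W$ with respect to any $2$-cocycle, and the $W$-action on $\Spec A$ is generically free (as it is here, on a torus), then a central element $\sum_w a_w J_w$ must satisfy $a'a_w = a_w\, w(a')$ for all $a' \in A$, forcing $a_w = 0$ for $w \neq 1$; hence $Z(B) = A^W$ regardless of the cocycle. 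The cocycle question matters for finer structural results (e.g.\ Morita equivalences), but not for the statement at hand.
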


In particular, $\tZ$ acts on two irreducible representations $\Pi$, $\Pi'$ of $G$ via the same
map $\tZ \rightarrow \overline{\CK}$ if, and only if, $\Pi$ and $\Pi'$ have the same supercuspidal support.
This defines, for each $(M,\tpi)$, a bijection between the $\overline{\CK}$-points of $\Spec \tZ_{(M,\tpi)}$
and supercuspidal supports in the inertial equivalence class of $(M,\tpi)$; that is, unramified twists
of $\tpi$ considered up to $W_M(\tpi)$-conjugacy.

Now let $L$ be a Levi in $\GL_n$; then $L$ factors as a product of $L_i$ isomorphic to $\GL_{n_i}(F)$.
For each $i$, let $M_i$ be a Levi in $L_i$, and $\tpi_i$ an irreducible supercuspidal $\overline{\CK}$-representation
of $M_i$.  We then have isomorphisms:
$$\tZ_{M_i,\tpi_i} \cong \left(\overline{\CK}[(M_i/(M_i)_0)]^{H_i}\right)^{W_{M_i}(\tpi_i)}.$$
Let $M$ be the product of the $M_i$; we may regard it as a Levi of $L$ and hence as a Levi of $\GL_n(F)$.
Let $\tpi$ be the tensor product of the $\tpi_i$.  The quotient $M/M_0$ factors naturally as a product
of $M_i/(M_i)_0$, and this induces a map:
$$\left(\overline{\CK}[(M/M_0)]^H\right)^{W_M(\tpi)} \rightarrow \bigotimes_i \left(\overline{\CK}[(M_i/(M_i)_0)]^{H_i}\right)^{W_{M_i}(\tpi_i)}$$
and hence a map 
$$\Ind_{\{(M_i,\tpi_i)\}}: \tZ_{(M,\tpi)} \rightarrow \bigotimes_i \tZ_{(M_i,\tpi_i)}.$$
On $\overline{\CK}$-points
this takes the $\CK$-point of the tensor product that corresponds to the collection of supercuspidal supports
$\{(M_i,\tpi_i \otimes \chi_i)\}$ to the point of $\Spec \tZ_{(M,\tpi)}$ corresponding to the supercuspidal support
$(M,\otimes_i (\tpi_i \otimes \chi_i))$.

%%%%We will have need of the following, nearly trivial, observation about this map:
%%%%\begin{lemma} \label{lemma:char 0 saturation}
%%%%The map $\tZ_{(M,\tpi)} \rightarrow \bigotimes_i \tZ_{(M_i,\tpi_i)}$ is $\overline{\CK}$-saturated.
%%%%\end{lemma}
%%%%\begin{proof}
%%%%Embed both $\tZ_{(M,\tpi)}$ and $\bigotimes_i \tZ_{(M_i,\tpi_i)}$ in $\bigotimes_i \overline{\CK}[M_i/(M_i)_0]$
%%%%as above.  The target has an action of $W_M(\tpi)$, and hence also an action of the subgroup $\prod_i W_{M_i}(\tpi_i)$,
%%%%and $\tZ_{(M,\tpi)}$ and $\bigotimes_i \tZ_{(M_i,\tpi_i)}$ are identified with those elements invariant
%%%%under $W_M(\tpi)$ or this subgroup, respectively.  But one can check whether an element in $\bigotimes_i \overline{\CK}[M_i/(M_i)_0]$
%%%%is invariant simply by looking at its values on $\overline{\CK}$-points, so the result is immediate.
%%%%\end{proof}

We now turn to the study of $\Rep_{W(k)}(G)$; let $Z$ denote the center of this category.
In this setting there is an analogue of the Bernstein-Deligne characterization of the primitive idempotents
of $Z$.  By~\cite{bernstein1}, Theorem 11.8, such idempotents are parameterized by inertial equivalence classes of pairs $(L,\pi)$, where
$\pi$ is now an irreducible supercuspidal representation of $L$ over $k$.

If we let $e_{[L,\pi]}$ denote the idempotent of $Z$ corresponding to $(L,\pi)$, $\Rep_{W(k)}(G)_{[L,\pi]}$ the corresponding
block, and $Z_{[L,\pi]}$ the
corresponding factor of the Bernstein center, then one has the following basic structure results:

\begin{theorem}[\cite{bernstein1}, Theorem 12.8]
The ring $Z_{[L,\pi]}$ is a finitely generated, reduced, flat $W(k)$-algebra.
\end{theorem}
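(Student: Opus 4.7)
The plan is to realize the block $\Rep_{W(k)}(G)_{[L,\pi]}$ as a module category over a single Hecke algebra $\CH$ of manageable shape, and to read off the three properties of $Z_{[L,\pi]} \cong Z(\CH)$ from the structure of $\CH$ together with the Bernstein--Deligne description of the generic fibre.

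First I would construct, by means of $W(k)$-integral type theory for $\GL_n(F)$ (a lift to $W(k)$ of Bushnell--Kutzko semisimple types, carried out in detail in \cite{bernstein1}), a pair $(J,\lambda)$ with $J \subseteq G$ compact open and $\lambda$ a smooth $W(k)$-free representation of $J$, whose compact induction $P := \cInd_J^G \lambda$ is a finitely generated projective progenerator of the block. The functor $\Hom_{W(k)[G]}(P,-)$ then gives a Morita-type equivalence between $\Rep_{W(k)}(G)_{[L,\pi]}$ and the category of $\CH$-modules, where $\CH := \End_{W(k)[G]}(P)$, and under this equivalence $Z_{[L,\pi]}$ is canonically identified with the center $Z(\CH)$.

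The double-coset/Mackey description exhibits $\CH$ as a free $W(k)$-module (with a basis indexed by the intertwining $(J,\lambda)$-double cosets), so its center $Z(\CH)$ is $W(k)$-torsion free and hence $W(k)$-flat. For reducedness, the generic fibre $Z_{[L,\pi]} \otimes_{W(k)} \CK$ decomposes canonically as a finite product of those Bernstein--Deligne blocks $\tZ_{(M,\tpi)}$ indexed by characteristic-zero inertial classes lifting $[L,\pi]$; each $\tZ_{(M,\tpi)}$ is normal (in particular reduced) by the Bernstein--Deligne theorem, so the generic fibre is reduced, and flatness (which gives an embedding $Z_{[L,\pi]} \hookrightarrow Z_{[L,\pi]}[\tfrac{1}{\ell}]$) then propagates reducedness back to $Z_{[L,\pi]}$ itself.

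The main obstacle is finite generation of $Z(\CH)$ as a $W(k)$-algebra; unlike flatness and reducedness it does not follow from soft considerations. The strategy I would take is to exhibit $\CH$ explicitly via type theory: after suitably refining $(J,\lambda)$, one obtains an isomorphism of $\CH$ with a (twisted) tensor product of affine Hecke algebras of type $A$ whose parameters are units in $W(k)$. For such affine Hecke algebras the Bernstein presentation identifies the center with the Weyl-group invariants of a Laurent polynomial subring over $W(k)$, and this invariant ring is manifestly a finitely generated $W(k)$-algebra. Transporting finite generation across the type-theoretic isomorphism gives the conclusion for $Z_{[L,\pi]}$.
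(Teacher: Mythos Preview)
This theorem is cited without proof from \cite{bernstein1}; your proposal should be measured against the argument there, partial details of which are recalled in Section~\ref{sec:bernstein} of the present paper. Your overall architecture---produce a compactly induced progenerator $P$, set $\CH=\End_G(P)$, identify $Z_{[L,\pi]}\cong Z(\CH)$, and read off the three properties---matches \cite{bernstein1}, and your arguments for flatness and reducedness are correct.

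The gap is in the finite-generation step. Your assertion that, after refining $(J,\lambda)$, the algebra $\CH$ is a (twisted) tensor product of affine Hecke algebras of type $A$ with unit parameters holds over $\overline{\CK}$ by Bushnell--Kutzko, but fails over $W(k)$. A single $W(k)$-block $[L,\pi]$ absorbs several distinct $\overline{\CK}$-inertial classes $(M,\tpi)$ (Theorem~\ref{thm:bernstein char 0}); in particular it can contain $\overline{\CK}$-supercuspidals not seen by any characteristic-zero type for a proper Levi, so no compactly induced module whose endomorphism ring is a type-$A$ affine Hecke algebra can be a progenerator integrally. As recalled after Theorem~\ref{thm:bernstein ind}, the progenerators $\CP_m$ actually used in \cite{bernstein1} are compact inductions of $\tkappa_m\otimes\CP_{\sigma_m}$ where $\CP_{\sigma_m}$ is the \emph{projective envelope} of a cuspidal $k$-representation of a finite general linear group, not an irreducible; the resulting $E_m=\End(\CP_m)$ contains the Gelfand--Graev endomorphism ring $\overline{A}_{q^{f'},m,1}$ of Section~\ref{sec:finite} and is not governed by a Bernstein presentation. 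Finite generation in \cite{bernstein1} is instead obtained by computing $E_m$ directly for $m\in\{1,e_{q^{f'}},\ell e_{q^{f'}},\dots\}$ (Corollary~9.2 there), identifying $Z_m\cong E_m$ for such $m$, and treating general $m$ through the saturated embedding $Z_m\hookrightarrow\bigotimes_i E_{\nu^{\max}_i}$ of Theorem~\ref{thm:bernstein ind}.
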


It is important to note that, in contrast to the situation over $\overline{\CK}$, the ring $Z_{[L,\pi]}$
is in general very far from being normal.

We also have a description of $Z_{[L,\pi]} \otimes \overline{\CK}$
in terms of $\tZ$.  This can be made precise as follows: if $(M,\tpi)$ is a pair over $\overline{\CK}$,
and $\Pi$ is an irreducible integral representation of $G$ over $\overline{\CK}$ with supercuspidal support
in the inertial equivalence class of $(M,\tpi)$, then there exists a (possibly proper) Levi subgroup
$L$ of $M$, and an irreducible supercuspidal representation $\pi$ of $L$, such that every irreducible
subquotient of the mod $\ell$ reduction of $\Pi$ has supercuspidal support $(L,\pi)$.  Moreover, the inertial
equivalence class of $(L,\pi)$ depends only on that of $(M,\tpi)$, and not on the particular choice of $\pi$.
We say that $(M,\tpi)$ {\em reduces modulo $\ell$} to $(L,\pi)$; this defines a finite-to-one map from
inertial equivalence classes over $\overline{\CK}$ to inertial equivalence classes over $k$.  One then has:

\begin{theorem}[\cite{bernstein1}, Proposition 12.1] \label{thm:bernstein char 0}
The natural map $Z \otimes \overline{\CK} \rightarrow \tZ$ induces an isomorphism:
$$Z_{[L,\pi]} \otimes \overline{\CK} \cong \prod_{(M,\tpi)} \tZ_{(M,\tpi)},$$
where the product is over all pairs $(M,\tpi)$, up to inertial equivalence, that reduce modulo $\ell$
to the pair $(L,\pi)$.
\end{theorem}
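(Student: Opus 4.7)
The plan is to construct the natural map $Z \otimes_{W(k)} \overline{\CK} \to \tZ$, identify how it behaves on primitive idempotents, and then check that the resulting restriction is an isomorphism via faithfulness and a bijection on $\overline{\CK}$-points. A central element $z \in Z$ gives a natural endomorphism of every smooth $W(k)[G]$-module, and hence---via restriction of scalars along $W(k) \hookrightarrow \overline{\CK}$---of every smooth $\overline{\CK}[G]$-module. These endomorphisms commute with all morphisms, yielding a ring homomorphism $Z \to \tZ$ and by base change $Z \otimes \overline{\CK} \to \tZ$. To pin down the image on idempotents, note that $e_{[L,\pi]}$ acts as the identity on a $W(k)[G]$-module $V$ precisely when every irreducible subquotient of $V \otimes_{W(k)} k$ has supercuspidal support inertially equivalent to $(L,\pi)$. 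For an integral irreducible $\overline{\CK}[G]$-representation $\Pi$ with inertial class $(M,\tpi)$, this condition becomes precisely the reduction mod $\ell$ condition $(M,\tpi) \to (L,\pi)$, so in $\tZ$ we have the finite-sum identity
$$e_{[L,\pi]} \;=\; \sum_{(M,\tpi) \to (L,\pi)} e_{(M,\tpi)},$$
the sum being finite since reduction mod $\ell$ is finite-to-one. The natural map therefore restricts to
$$Z_{[L,\pi]} \otimes \overline{\CK} \;\longrightarrow\; \prod_{(M,\tpi) \to (L,\pi)} \tZ_{(M,\tpi)}.$$

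For injectivity, I would use that $Z_{[L,\pi]}$ is reduced and $W(k)$-flat, so $Z_{[L,\pi]} \otimes \overline{\CK}$ is reduced; any kernel element acts as zero on every irreducible $\overline{\CK}[G]$-representation in the block, and the product of characters of $Z_{[L,\pi]} \otimes \overline{\CK}$ indexed by such irreducibles separates points. Moreover, both sides have the same $\overline{\CK}$-points: for the right-hand side, a $\overline{\CK}$-point corresponds to a pair consisting of an inertial class $(M,\tpi)$ reducing to $(L,\pi)$ together with a supercuspidal support in that class, i.e. a single $\overline{\CK}$-inertial class reducing to $(L,\pi)$; for the left-hand side, characters of $Z_{[L,\pi]} \otimes \overline{\CK}$ correspond by Bernstein--Deligne to the action on irreducible $\overline{\CK}[G]$-representations in the block, which are exactly the integral irreducibles whose mod $\ell$ reduction has supercuspidal support $(L,\pi)$. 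This gives a bijection on $\overline{\CK}$-points.

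The main obstacle is promoting this bijection on points to a surjection of $\overline{\CK}$-algebras. The strategy is to produce, for each target factor $\tZ_{(M,\tpi)}$, enough central elements in $Z_{[L,\pi]} \otimes \overline{\CK}$ whose images generate $\tZ_{(M,\tpi)}$. Bernstein's description of $\tZ_{(M,\tpi)}$ as the ring of $W_M(\tpi)$-invariants on $\Spec \overline{\CK}[M/M_0]^H$ is a convenient target: one constructs such elements by integrating over compact open subgroups and working with Hecke algebras of types together with their Bernstein presentations, and the finite generation and flatness of $Z_{[L,\pi]}$ over $W(k)$ allow one to transfer these constructions to the integral setting. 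Piecing these generators together across all $(M,\tpi) \to (L,\pi)$ yields surjectivity; combined with injectivity, this gives the desired isomorphism.
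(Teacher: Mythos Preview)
The paper does not prove this statement; it merely cites it from~\cite{bernstein1}, Proposition 12.1, and uses it as input. So there is no ``paper's own proof'' to compare against. That said, your sketch has real gaps that would not survive as a standalone argument.

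Your construction of the map and the identification of where $e_{[L,\pi]}$ lands are fine, and the injectivity argument via reducedness is correct in spirit. But your ``bijection on $\overline{\CK}$-points'' step is circular: you claim that characters of $Z_{[L,\pi]} \otimes \overline{\CK}$ are parametrized, via Bernstein--Deligne, by supercuspidal supports in the block. Bernstein--Deligne describes $\tZ$, not $Z_{[L,\pi]}\otimes\overline{\CK}$; the statement that every $\overline{\CK}$-point of $\Spec Z_{[L,\pi]}$ arises from an irreducible in the block, and that distinct supercuspidal supports give distinct points, is exactly Corollary~\ref{cor:bernstein points}, which the paper deduces \emph{from} the theorem you are trying to prove.

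More seriously, your surjectivity paragraph is not an argument but a list of keywords (``integrating over compact open subgroups'', ``Hecke algebras of types'', ``Bernstein presentations''). Even granting a bijection on $\overline{\CK}$-points, a bijection on closed points between reduced finite-type $\overline{\CK}$-algebras does not by itself force an isomorphism; you need an honest reason why the map hits generators of each $\tZ_{(M,\tpi)}$. The actual proof in~\cite{bernstein1} goes through the type-theoretic description of the block: one has an explicit progenerator whose endomorphism ring computes $Z_{[L,\pi]}$, and after base change to $\overline{\CK}$ this progenerator decomposes along the inertial classes $(M,\tpi)$ lying over $(L,\pi)$, matching the endomorphism ring with $\prod \tZ_{(M,\tpi)}$. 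That structural input is what your sketch is missing.
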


From this and the description of the $\overline{\CK}$-points of $\Spec \tZ_{(M,\tpi)}$ one immediately deduces:
\begin{corollary} \label{cor:bernstein points}
The $\overline{\CK}$-points of $\Spec Z_{[L,\pi]}$ are in bijection with the supercupidal
supports of irreducible smooth $\overline{\CK}$-representations in $\Rep_{W(k)}(G)_{[L,\pi]}$.
\end{corollary}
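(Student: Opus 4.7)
The plan is to deduce this directly from Theorem~\ref{thm:bernstein char 0} together with the known description of the $\overline{\CK}$-points of each $\Spec \tZ_{(M,\tpi)}$. Since $Z_{[L,\pi]}$ is a finitely generated, $\ell$-torsion free $W(k)$-algebra, its $\overline{\CK}$-points are in natural bijection with the $\overline{\CK}$-points of $\Spec(Z_{[L,\pi]} \otimes \overline{\CK})$.

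First I would apply Theorem~\ref{thm:bernstein char 0} to identify
$$\Spec(Z_{[L,\pi]} \otimes \overline{\CK}) \;=\; \coprod_{(M,\tpi)} \Spec \tZ_{(M,\tpi)},$$
where $(M,\tpi)$ ranges over inertial equivalence classes of cuspidal pairs over $\overline{\CK}$ reducing mod $\ell$ to $(L,\pi)$. Next, I would invoke the Bernstein-Deligne description (the theorem immediately after \cite{BD}, Th{\'e}or{\`e}me 2.13) to match $\overline{\CK}$-points of $\Spec \tZ_{(M,\tpi)}$ with $W_M(\tpi)$-orbits of unramified twists of $\tpi$, i.e. with supercuspidal supports lying in the inertial class $[M,\tpi]$.

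Combining these two identifications, the $\overline{\CK}$-points of $\Spec Z_{[L,\pi]}$ are in bijection with the set of supercuspidal supports $(M,\tpi \otimes \chi)$ whose inertial class reduces mod $\ell$ to $(L,\pi)$. To finish, I would observe that this is exactly the set of supercuspidal supports of irreducible smooth $\overline{\CK}$-representations in $\Rep_{W(k)}(G)_{[L,\pi]}$: by definition an irreducible $\overline{\CK}$-representation $\Pi$ lies in this block precisely when its supercuspidal support $(M,\tpi \otimes \chi)$ has inertial equivalence class reducing mod $\ell$ to $(L,\pi)$, which is the notion of reduction recalled in the paragraph preceding Theorem~\ref{thm:bernstein char 0}.

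There is no real obstacle here; the corollary is essentially a translation step. The only subtlety to keep in mind is that one must check the compatibility between the ``reduces mod $\ell$'' relation on inertial classes and the characterization of which block an irreducible $\overline{\CK}$-representation lies in, but both are defined in the same way via the supercuspidal supports appearing in the mod $\ell$ reduction, so this is immediate from the setup recalled just before Theorem~\ref{thm:bernstein char 0}.
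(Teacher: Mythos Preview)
Your proposal is correct and follows precisely the route the paper indicates: the corollary is stated as an immediate consequence of Theorem~\ref{thm:bernstein char 0} together with the Bernstein--Deligne description of the $\overline{\CK}$-points of each $\Spec \tZ_{(M,\tpi)}$, and your argument unpacks exactly that.
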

We now give a more precise description of $Z_{[L,\pi]}$.  We first reduce to a more easily studied
special case:

\begin{defn} A pair $(L,\pi)$ is {\em simple} if there exist $r,m$ such that $n = rm$,
$L$ is isomorphic to $\GL_m(F)^r$, and $\pi$, up to unramified twist, is of the form $(\pi')^{\otimes r}$
for an irreducible supercuspidal representation $\pi'$ of $\GL_m(F)$.
\end{defn}

Note that any pair $(L,\pi)$ factors uniquely as a product of simple pairs $(L^i,\pi^i)$, with
$\pi^i \cong (\pi'_i)^{\otimes r_i}$, such that no $\pi_i'$ is an unramified twist of any other.
One then has:

\begin{theorem}[\cite{bernstein1}, Theorem 12.4]
Let $\{(L^i,\pi^i)\}$ be the natural decomposition of $(L,\pi)$ as a product of simple pairs.
Then there is a natural isomorphism:
$$Z_{[L,\pi]} \cong \bigotimes_i Z_{[L^i,\pi^i]}$$
such that, for any sequence $\{(M^i,\tpi^i)\}$ reducing modulo $\ell$ to $\{(L^i,\tpi^i)\}$,
the diagram:
$$
\begin{array}{ccc}
Z_{[L,\pi]} \otimes \overline{\CK} & \rightarrow & [\bigotimes_i Z_{[L^i,\pi^i]}] \otimes \overline{\CK}\\
\downarrow & & \downarrow\\
\tZ_{(M,\tpi)} & \rightarrow & \bigotimes_i \tZ_{(M^i,\tpi^i)}
\end{array}
$$
commutes, where $(M,\tpi)$ is the product of the $(M_i,\tpi_i)$, and the bottom horizontal map is the
map $\Ind_{\{(M^i,\tpi^i)\}}$ described above.
\end{theorem}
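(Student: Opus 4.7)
The plan has two parts: establish the statement on the generic fibre using the explicit Bernstein--Deligne description of $\tZ$, then lift to an integral isomorphism using the flatness of both sides together with a concrete construction via parabolic induction.

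On the generic fibre, Theorem~\ref{thm:bernstein char 0} gives
$$Z_{[L,\pi]} \otimes \overline{\CK} \cong \prod_{(M,\tpi)} \tZ_{(M,\tpi)}, \qquad \Bigl[\bigotimes_i Z_{[L^i,\pi^i]}\Bigr] \otimes \overline{\CK} \cong \prod_{\{(M^i,\tpi^i)\}} \bigotimes_i \tZ_{(M^i,\tpi^i)},$$
where $(M,\tpi)$ runs over inertial classes reducing to $(L,\pi)$ and $\{(M^i,\tpi^i)\}$ over compatible tuples reducing to $\{(L^i,\pi^i)\}$. Since $L = \prod_i L^i \subseteq M$, any such $M$ factors as $\prod_i M^i$ with $L^i \subseteq M^i$ and $\tpi = \otimes_i \tpi^i$, yielding a canonical bijection between the two indexing sets. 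Applying the Bernstein--Deligne formula $\tZ_{(M,\tpi)} \cong (\overline{\CK}[M/M_0]^H)^{W_M(\tpi)}$ factor by factor, the decomposition $M/M_0 = \prod_i M^i/(M^i)_0$ forces $H = \prod_i H^i$, and the simplicity hypothesis (no $\pi'_j$ is an unramified twist of $\pi'_k$ for $j \neq k$) prevents any Weyl element from permuting the distinct simple blocks, so $W_M(\tpi) = \prod_i W_{M^i}(\tpi^i)$. Taking invariants factor-by-factor recovers exactly the map $\Ind_{\{(M^i,\tpi^i)\}}$, now recognised as an isomorphism; assembling these across all $(M,\tpi)$ yields the desired generic-fibre isomorphism.

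For the integral lift, the map is constructed directly. The block $\Rep_{W(k)}(L)_{[L,\pi]}$ factors as the exterior tensor product $\prod_i \Rep_{W(k)}(L^i)_{[L^i,\pi^i]}$, so its centre is $\bigotimes_i Z_{[L^i,\pi^i]}$. Parabolic induction $i_{L \subseteq P}^G$ is exact, sends the $L$-block into the $G$-block, and via standard functoriality of Bernstein centres with respect to Levi inclusions produces a natural ring homomorphism $Z_{[L,\pi]} \to \bigotimes_i Z_{[L^i,\pi^i]}$. Checking on $\overline{\CK}$-points, where $\tZ$ acts on irreducibles by scalars determined by the supercuspidal support, this coincides with the generic-fibre map constructed above, so the diagram commutes. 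Since both rings are $\ell$-flat, injectivity of the integral map is immediate from the generic-fibre isomorphism.

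The main obstacle is integral surjectivity. Given $y \in \bigotimes_i Z_{[L^i,\pi^i]}$, one must construct a preimage in $Z_{[L,\pi]}$. The approach is to define the candidate preimage via its action on a suitably chosen integral progenerator of the $G$-block (produced by parabolically inducing a progenerator of the $L$-block from $P$ and taking a Weyl-equivariant direct summand if necessary), and then to verify that this endomorphism commutes with every morphism in the block, hence defines a central element. Equivalently, this is an integral ``$\overline{\CK}$-saturation'' statement in the spirit of Lemma~\ref{lemma:char 0 saturation}: the compatibility conditions satisfied by $y$ across all tuples $\{(M^i,\tpi^i)\}$ encode exactly the naturality required of a central element of the $G$-block. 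Producing this saturation at the integral level, rather than merely on the generic fibre where it is formal, is the genuine content of the theorem and the step that relies most crucially on the concrete structure of the integral blocks.
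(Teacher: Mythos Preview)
This theorem is not proved in the present paper: it is quoted from \cite{bernstein1}, Theorem 12.4, and serves here purely as input. There is therefore no proof in this paper to compare your proposal against.

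That said, your outline is broadly in the right spirit for how such a result is actually established in \cite{bernstein1}. The generic-fibre analysis you give is essentially correct and straightforward. You are also right to identify integral surjectivity as the substantive step, and honest that your sketch does not resolve it. The actual argument in \cite{bernstein1} does not proceed via an abstract saturation principle of the kind you gesture at; rather, it works with explicit progenerators built from type theory (the objects $\CP_{K,\tau}$ and their endomorphism rings), and the tensor factorization is obtained concretely at the level of these Hecke-algebra-like endomorphism rings. Your suggestion to ``take a Weyl-equivariant direct summand if necessary'' and then verify centrality by hand is not how the argument goes and would be difficult to carry out directly; the structural input from type theory is what makes the integral statement tractable.
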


We thus focus our attention on the case where $(L,\pi)$ is simple.  Fix an integer $n_1$ and an irreducible supercuspidal
representation $\pi'$ of $\GL_{n_1}(F)$ over $k$.  For each $m > 0$, let $L_m$ be a Levi of $\GL_{n_1m}(F)$
isomorphic to $\GL_{n_1}(F)^m$, and let $\pi_m$ be the representation $(\pi')^{\otimes m}$ of $L_m$.
We can then consider the family of rings $Z_m := Z_{[L_m,\pi_m]}$ as $n$ varies.

Section 13 of~\cite{bernstein1} contains detailed information about the structure of the family $Z_m$.  In
particular this structure theory is closely related to the endomorphism rings of certain projective
objects $\CP_{K_m,\tau_m}$ for particular $m$.  More precisely, consider 
the group of unramified characters $\chi$ of $\GL_{n_1}(F)$ such that $\pi' \otimes \chi$ is isomorphic to $\pi'$.
This is a finite group; denote its order by $f'$.  Then attached to the system of pairs $(L_m,\pi_m)$ we have
a system of projective objects $\CP_{K_m,\tau_m}$, where $m$ lies in the set 
$\{1, e_{q^{f'}}, \ell e_{q^{f'}}, \ell^2 e_{q^{f'}}, \dots \}$.  (We refer the reader to Sections 7 and 9 of~\cite{bernstein1}
for a construction and structure theory of these objects.)  For brevity, denote the representation
$\CP_{K_m,\tau_m}$ by $\CP_m$.

For such $m$, let $E_m$ denote the endomorphism ring of $\CP_m$.  Then, by Corollary 9.2 of~\cite{bernstein1},
$E_m$ is a reduced, finite type,
$\ell$-torsion free $W(k)$-algebra.  Moreover, we have a map $Z_m \rightarrow E_m$ that gives the action of
$Z_m$ on the object $\CP_m$ of $\Rep_{W(k)}(\GL_{n_1m}(F))_{[L_m,\pi_m]}$.

If $m$ is arbitrary, the relationship between the rings $Z_m$ and $E_m$ is more complicated.
For a partition $\nu$ of $m$, we will say that $\nu$ is $q$-relevant if each $\nu_i$ belongs to the set
$\{1, e_q, \ell e_q, \ell^2 e_q, \dots \}$,
where $e_q$ is the multiplicative order of $q$ modulo $\ell$
(relevant partitions were called admissible in~\cite{bernstein1}).
Let $\nu$ be the maximal $q^{f'}$-relevant partition of $m$. 
Let $M_{\nu}$ and $P_{\nu}$ be the standard Levi and (upper triangular) parabolic subgroups of $\GL_{n_1m}$ attached to $n_1\nu$,
so that $M_{\nu}$ is a product of $\GL_{n_1\nu_i}(F)$, and consider the representation $\bigotimes_i \CP_{\nu_i}$ of
$M_{\nu}$.  Then $Z_m$ acts on the parabolic induction $i_{P_{\nu}}^{\GL_{n_1m}(F)} \bigotimes_i \CP_{\nu_i}$,
and we have:

\begin{theorem}[\cite{bernstein1}, Theorem 13.7] \label{thm:bernstein ind}
The action of $Z_m$ on
$i_{P_{\nu}}^{\GL_{n_1m}(F)} \bigotimes_i \CP_{\nu_i}$ factors through
the action of $\bigotimes_i E_{\nu_i}$ on $\bigotimes_i \CP_{\nu_i}$.  Moreover, the resulting
map:
$$Z_m \rightarrow \bigotimes_i E_{\nu_i}$$
is injective with saturated image, and is an isomorphism if $m$ lies in
$\{1,e_{q^{f'}},\ell e_{q^{f'}}, \dots \}$.  (Note that in this case $\nu$ is the one-element partition
$\{m\}$ of $m$.)
\end{theorem}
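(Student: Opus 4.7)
The plan is to treat the three assertions separately: the factorization through $\bigotimes_i E_{\nu_i}$, the injectivity plus saturation, and the isomorphism in the special case $m\in\{1,e_{q^{f'}},\ell e_{q^{f'}},\dots\}$.

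First, for the factorization, I would use the general compatibility of the integral Bernstein center with parabolic induction. Namely, the tensor decomposition $M_{\nu}\cong\prod_i\GL_{n_1\nu_i}(F)$ gives a block decomposition of $\Rep_{W(k)}(M_{\nu})$ containing the block corresponding to $\bigotimes_i(L_{\nu_i},\pi_{\nu_i})$, whose center is naturally $\bigotimes_i Z_{\nu_i}$. There is a canonical morphism $Z_m\to\bigotimes_i Z_{\nu_i}$ induced from the action on parabolic inductions (the ``constant term'' map on centers), and the action of $Z_m$ on $i_{\overline P_{\nu}}^{\GL_{n_1m}(F)}\bigotimes_i\CP_{\nu_i}$ factors through this morphism and then through $\bigotimes_i E_{\nu_i}$ via the tautological action of each $Z_{\nu_i}$ on $\CP_{\nu_i}$. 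So the question reduces to understanding this composite $Z_m\to\bigotimes_i E_{\nu_i}$.

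Second, for injectivity and saturation, the key input is that after inverting $\ell$, the situation is governed by Bernstein--Deligne theory. On $\overline{\CK}$-points, Theorem~\ref{thm:bernstein char 0} identifies $Z_m\otimes\overline{\CK}$ as $\prod_{(M,\tpi)}\tZ_{(M,\tpi)}$, and each $\tZ_{(M,\tpi)}$ maps into the corresponding factor of $\bigotimes_i\tZ_{(M^i,\tpi^i)}$ via the map $\Ind_{\{(M^i,\tpi^i)\}}$ of the preceding paragraph; Lemma~\ref{lemma:char 0 saturation} tells us each such map is $\overline{\CK}$-saturated. Injectivity then follows because every supercuspidal support in the block is obtained (via parabolic induction) from supercuspidal supports on $M_{\nu}$, and the irreducible subquotients of the $i_{\overline P_{\nu}}^G\bigotimes_i\CP_{\nu_i}$ exhaust all such supports. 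For saturation over $W(k)$, once one knows $\bigotimes_i E_{\nu_i}$ is $\ell$-torsion free and the map is an isomorphism after inverting $\ell$ onto the $W_{M}(\tpi)$-invariants in the product decomposition, one lifts the characteristic-zero saturation by checking that if $\ell x\in Z_m$ then the $W_M(\tpi)$-invariance forces $x\in Z_m$ as well; here one uses that $\CP_{\nu_i}$ is a projective generator of its block and that $Z_{\nu_i}$ is by construction the integral avatar of the relevant invariants, not merely the obvious invariants on the character side.

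Finally, for the isomorphism in the case $m\in\{1,e_{q^{f'}},\ell e_{q^{f'}},\dots\}$, the partition $\nu$ is just $\{m\}$ and there is no parabolic induction to perform; the statement becomes $Z_m\cong E_m$. Here $\CP_m$ is a projective generator of the block $\Rep_{W(k)}(\GL_{n_1m}(F))_{[L_m,\pi_m]}$ by the construction in Sections 7 and 9 of \cite{bernstein1}, so the block is Morita equivalent to the category of right $E_m$-modules and hence $Z_m$ is identified with the center of $E_m$. One then needs the crucial point that for these specific $m$ the algebra $E_m$ is commutative, which reflects the fact that the pair $(K_m,\tau_m)$ is chosen so that $\CP_m$ is a projective generator with endomorphism ring given by a type-theoretic Hecke algebra that, at these values of $m$, is abelian. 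Granted this commutativity, $E_m=Z(E_m)=Z_m$.

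The main obstacle is the saturation statement: the map $Z_m\to\bigotimes_i E_{\nu_i}$ becomes an isomorphism after $\otimes\overline{\CK}$ essentially by Bernstein--Deligne, but controlling the integral structure requires genuinely new input about how the integral Hecke algebras $E_{\nu_i}$ encode the invariants $\left(\overline{\CK}[(M/M_0)]^H\right)^{W_M(\tpi)}$ compatibly with reduction modulo $\ell$. That compatibility is exactly what Sections 12 and 13 of \cite{bernstein1} are devoted to establishing.
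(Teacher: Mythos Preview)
The paper does not prove this theorem at all: it is quoted verbatim from \cite{bernstein1}, Theorem 13.7, and used as a black box. There is therefore no ``paper's own proof'' to compare your proposal against; in this paper the statement functions purely as an input from the companion paper.

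That said, a few remarks on your sketch as a putative outline of the argument in \cite{bernstein1}. Your treatment of the factorization step is fine. For the isomorphism when $m\in\{1,e_{q^{f'}},\ell e_{q^{f'}},\dots\}$, you assert that $\CP_m$ is a projective generator of the block and that $E_m$ is commutative; both are true but are themselves nontrivial results of \cite{bernstein1} (the commutativity of $E_m$ comes from an explicit Hecke-algebra presentation), so you are really just relocating the content of the theorem rather than proving it. The most serious gap is in your saturation argument: you claim that after inverting $\ell$ the map $Z_m\to\bigotimes_i E_{\nu_i}$ becomes an isomorphism onto the $W_M(\tpi)$-invariants, and then that $\overline{\CK}$-saturation lifts to $W(k)$-saturation by a ``$W_M(\tpi)$-invariance forces $x\in Z_m$'' step. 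Neither of these is justified. The integral saturation is genuinely delicate and is the heart of the matter in \cite{bernstein1}; it does not follow formally from the characteristic-zero picture plus $\ell$-torsion-freeness, and your last paragraph essentially concedes this by deferring to Sections 12--13 of \cite{bernstein1}.
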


For $m$ in $\{1, e_{q^{f'}}, \ell e_{q^{f'}}, \dots\}$ we thus have a natural
identification of $Z_m$ with $E_m$.  For arbitrary $m$, we can regard the map
$Z_m \rightarrow \bigotimes_i E_{\nu_i}$ as a map $Z_m \rightarrow \bigotimes_i Z_{\nu_i}$.  Denote this map
by $\Ind_{\nu}$.  It is injective with saturated image. 

For $m$ in $\{1, e_{q^{f'}}, \ell e_{q^{f'}}, \dots \}$, the results of Sections 7 and 9 of~\cite{bernstein1} give very
precise information about $E_m$, and hence $Z_m$.  In particular there is an integer $f$ dividing $f'$, 
and a
a cuspidal $k$-representation $\sigma_m$ of $\GL_{\frac{mf'}{f}}(\FF_{q^f})$ (attached to an $\ell$-regular conjugacy
class $(s'_1)^m$ with $s'_1$ irreducible of degree $f'$ over $\FF_{q^f}$),
such that the projective $\CP_m$ is a compact induction
$\cInd_{K_m}^{\GL_{n_1m}(F)} \tkappa_m \otimes \CP_{\sigma_m}$, where $\tkappa_m$ comes from type theory
and $\CP_{\sigma_m}$ is the projective envelope of $\sigma_m$, inflated to a representation of $K_m$ via a natural map
$K_m \rightarrow \GL_{\frac{mf'}{f}}(\FF_{q^f})$.

Section 5 of~\cite{bernstein1} shows that $\CP_{\sigma_m}$ is the projection of the Gelfand-Graev representation
of $\GL_{\frac{mf'}{f}}(\FF_{q^f})$ to the block containing $\sigma_m$.  In particular, the results of
section~\ref{sec:finite} identify the endomorphisms of $\CP_{\sigma_m}$ with $\overline{E}_{q^{f},md,s}$, where we have written $s = (s_1)^m$
and $d = \frac{f'}{f}$.  By Proposition~\ref{prop:E jordan} we may identify $\overline{E}_{q^f,md,s}$ with $\overline{E}_{q^{f'},m,1}$.

We thus obtain an embedding of $\overline{E}_{q^{f'},m,1}$ in $E_m$ for such $m$.  Furthermore, section 9 of~\cite{bernstein1}
constructs an invertible element $\Theta_{m,m}$ of $E_m$.  We thus obtain a map
$$\overline{E}_{q^{f'},m,1}[T,T^{-1}] \rightarrow E_m$$
taking $T$ to $\Theta_{m,m}$.  It follows easily from the description of $E_m$ as a Hecke algebra in section 9 of~\cite{bernstein1}
that the image of this map consists of the elements of $E_m$ supported on double cosets of the form $K_m z_{m,m}^r K_m$ for various $r$.
(In particular, this image is saturated in $E_m$.)

The image of $\overline{E}_{q^{f'},m,1}$ in $Z_m$ is easy to describe.  Indeed, we have:

\begin{prop} \label{prop:finite bernstein image}
Let $m$ lie in $\{1,e_{q^{f'}}, \ell e_{q^{f'}}, \dots \}$, and let $x$ be an element of $\overline{E}_{q^{f'},m,1}$, where the
latter is considered as a subalgebra of $Z_m$.  Then for any irreducible $\overline{\CK}$-representations
$\Pi,\Pi'$ of $\GL_{n_1m}(F)$ in the same block of $\Rep_{\overline{\CK}}(\GL_{n_1m}(F))$, the action of $x$
on $\Pi$ and $\Pi'$ is via the same scalar.  Conversely, any element of $Z_m$ with this property lies in $\overline{E}_{q^{f'},m,1}$.
\end{prop}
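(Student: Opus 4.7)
The plan is to reduce the claim to an assertion about the action of $x$ on certain projective generators, using the identification $Z_m = E_m = \End_G(\CP_m)$ furnished by Theorem~\ref{thm:bernstein ind} (which applies since $m \in \{1, e_{q^{f'}}, \dots\}$). Since $Z_m$ is reduced and $\ell$-torsion free (Theorem~12.8 of~\cite{bernstein1}), it suffices to show that the image of $x$ under each projection $Z_m \otimes \overline{\CK} \to \tZ_{(M,\tpi)}$ provided by Theorem~\ref{thm:bernstein char 0} lies in $\overline{\CK}\cdot 1 \subset \tZ_{(M,\tpi)}$; equivalently, $x$ must act by the same scalar on every irreducible $\overline{\CK}$-representation in each fixed $\overline{\CK}$-block.

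Setting $\overline{G}_f = \GL_{mf'/f}(\FF_{q^f})$, the $W(k)[\overline{G}_f]$-projective $\CP_{\sigma_m}$ decomposes after base change as $\CP_{\sigma_m} \otimes \overline{\CK} \cong \bigoplus_{\sigma'} \sigma'$, with $\sigma'$ running over the irreducible $\overline{\CK}$-representations of $\overline{G}_f$ that reduce modulo $\ell$ to a constituent of $\sigma_m$. Applying $\cInd_{K_m}^G(\tkappa_m \otimes -)$ yields
$$\CP_m \otimes \overline{\CK} \;\cong\; \bigoplus_{\sigma'} \cInd_{K_m}^G(\tkappa_m \otimes \sigma').$$
Viewing $x$ inside $\overline{A}_{q^{f'},m,1} \otimes \overline{\CK} = \End_{\overline{G}_f}(\CP_{\sigma_m} \otimes \overline{\CK})$, it acts on each simple summand $\sigma'$ by a scalar $\lambda_{\sigma'} \in \overline{\CK}$, and by functoriality of compact induction it acts on each summand $\cInd_{K_m}^G(\tkappa_m \otimes \sigma')$ by the same scalar $\lambda_{\sigma'}$.

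The conclusion will follow once we establish the following type-theoretic input: each $\cInd_{K_m}^G(\tkappa_m \otimes \sigma')$ is a projective generator of precisely one $\overline{\CK}$-block $\Rep_{\overline{\CK}}(G)_{(M(\sigma'),\tpi(\sigma'))}$, and the assignment $\sigma' \mapsto (M(\sigma'),\tpi(\sigma'))$ is a bijection between the irreducible constituents of $\CP_{\sigma_m} \otimes \overline{\CK}$ and the inertial classes over $\overline{\CK}$ reducing modulo $\ell$ to $(L_m,\pi_m)$. Granting this, every irreducible $\Pi$ in the block $(M(\sigma'),\tpi(\sigma'))$ is a quotient of a direct sum of copies of $\cInd_{K_m}^G(\tkappa_m \otimes \sigma')$, and the naturality of the Bernstein center action forces $x$ to act on $\Pi$ by $\lambda_{\sigma'}$, yielding the desired scalar. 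The main obstacle is thus this matching between the characteristic-zero block decomposition of $\CP_m$ and the simple decomposition of $\CP_{\sigma_m}\otimes\overline{\CK}$; it should be extracted from the classical Bushnell-Kutzko theory of types combined with the mod-$\ell$ type constructions in Section~7 of~\cite{bernstein1}.
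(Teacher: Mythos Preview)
Your approach is essentially the same as the paper's: both decompose $\CP_m \otimes \overline{\CK}$ via the decomposition of $\CP_{\sigma_m} \otimes \overline{\CK}$ into simple summands, observe that $x$ acts by a scalar on each such summand, and conclude by matching these summands with the $\overline{\CK}$-blocks. The ``main obstacle'' you flag is not actually an obstacle: the simple summands of $\CP_{\sigma_m} \otimes \overline{\CK}$ are precisely the representations $\St_s$ (indexed by semisimple classes $s$ with the fixed $\ell$-regular part), and the fact that $\cInd_{K_m}^G(\tkappa_m \otimes \St_s)$ is a progenerator for the single block $(M_s,\pi_s)$ is exactly the content of the type-theoretic results in Section~9 of~\cite{bernstein1} (in particular Theorem~9.4 there), which the paper's proof simply invokes.
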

\begin{proof}
The ring $Z_m$ annihilates both $\Pi$ and $\Pi'$ unless $\Pi$ and $\Pi'$ belong to a block of the form $\Rep_{\overline{\CK}}(\GL_{n_1m}(F))_{(M_s,\pi_s)}$
for a suitable $s$, in the notation of~\cite{bernstein1}, section 9.  In this case the action of $Z_m$ on $\Pi$ and $\Pi'$ factors
through the action of $Z_m$ on the summand $\cInd_{K_m}^{\GL_{n_1m}(F)} \tkappa_m \otimes \St_s$ of 
$\cInd_{K_m}^{\GL_{n_1m}(F)} \tkappa_m \otimes \CP_{\sigma_m} \otimes \overline{\CK}$.  In particular the action of $x$ on $\Pi$ and $\Pi'$
factors through the action of $x$ on $\St_s$, which is by a scalar.

Since $\overline{E}_{q^{f'},m,1}$ is saturated in $Z_m$, it suffices to prove the converse over $\overline{\CK}$.  But it follows easily from our factorization
of $Z_m$ in characteristic zero that every idempotent of $Z_m \otimes \overline{\CK}$ is contained in $\overline{E}_{q^{f'},m,1}$; since these idempotents
correspond to the blocks of $\Rep_{\overline{\CK}}(\GL_{n_1m}(F))_{M,\pi}$ the claim follows.
\end{proof}

We also make the following observation about the action of $\Theta_{m,m} \in Z_m$:
\begin{prop} \label{prop:theta action}
Let $P$ be a parabolic subgroup of $\GL_{n_1m}(F)$, with Levi subgroup $M$,
and let $\pi$ be an irreducible cuspidal $\overline{\CK}$-representation of $M$ such that
$i_P^G \pi$ lies in the block corresponding to $L_m,\pi_m$.  Suppose that
$M$ decomposes as a product of groups $M_i = \GL_{n_1m_i}(F)$, and let $\chi$ be an unramified character of $M$,
of the form $\otimes_i (\chi_i \circ \det)$, where we regard $(\chi_i \circ \det)$ as a character of $M_i$.

Let $x \in \overline{\CK}^{\times}$ be the scalar by which $\Theta_{m,m}$ acts on
$i_P^G \pi$.  Then $\Theta_{m,m}$ acts on $i_P^G \pi \otimes \chi$ via $x \prod_i \chi_i^{f'}(\unif_F)$.
\end{prop}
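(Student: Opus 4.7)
The plan is to identify the eigenvalue of $\Theta_{m,m}$ on $i_P^G\pi$ as a regular function of the supercuspidal support of $\pi$, via the Bernstein--Deligne description of the $\overline{\CK}$-Bernstein center, and then evaluate this function at the twisted supercuspidal support $(M,\pi\otimes\chi)$.

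First, I would note that $\Theta_{m,m}$ acts on $i_P^G\pi$ by a scalar. Indeed, by Theorem~\ref{thm:bernstein char 0}, $Z_m\otimes\overline{\CK}$ decomposes as $\prod_{(M',\tpi')}\tZ_{(M',\tpi')}$ over pairs reducing modulo $\ell$ to $(L_m,\pi_m)$, and each factor acts on every irreducible in its Bernstein block via the scalar obtained by evaluating at the supercuspidal support. All irreducible subquotients of $i_P^G\pi$ (respectively of $i_P^G(\pi\otimes\chi)$) share a common supercuspidal support $(M,\pi)$ (respectively $(M,\pi\otimes\chi)$), so $\Theta_{m,m}$ acts by a scalar on each. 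The task thus reduces to comparing these two scalars, which amounts to evaluating the image of $\Theta_{m,m}$ in $\tZ_{(M,\pi)}$ at two distinct supercuspidal supports differing by the unramified twist $\chi$.

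Next, I would identify the image of $\Theta_{m,m}$ in $\tZ_{(M,\pi)} \cong (\overline{\CK}[M/M_0]^H)^{W_M(\pi)}$ explicitly. Using the construction of $\Theta_{m,m}$ from Section 9 of~\cite{bernstein1} as a Hecke operator on $\CP_m = \cInd_{K_m}^{\GL_{n_1m}(F)}\tkappa_m\otimes\CP_{\sigma_m}$ supported on a double coset $K_m z_{m,m} K_m$ with $z_{m,m}$ a central element of $\GL_{n_1m}(F)$ related to a uniformizer, together with the inductive structure of Theorem~\ref{thm:bernstein ind} that embeds $Z_m$ in $\bigotimes_i Z_{\nu_i}$, one shows that this image is the $(H,W_M(\pi))$-invariant Laurent monomial whose value at an unramified twist of $\pi$ by $\chi = \otimes_i(\chi_i\circ\det)$ differs from the value at $\pi$ itself by exactly $\prod_i \chi_i^{f'}(\unif_F)$. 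The appearance of the power $f'$ reflects the requirement of invariance under the unramified-twist stabilizer attached to each $\pi_i$, which by construction contains a copy of the order-$f'$ stabilizer $H$ of $\pi'$ itself.

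The main obstacle is the explicit identification of the image of $\Theta_{m,m}$ in Bernstein--Deligne coordinates in the second step: this requires tracing its Hecke-algebra definition through the type-theoretic description of $\CP_m$ and through the canonical identification of $\tZ_{(M,\pi)}$ with torus invariants, in particular pinning down that the uniformizer element $z_{m,m}$ gives rise to an $f'$-th power rather than some other power. Once that identification is in hand, the twist computation is a direct substitution: twisting $\pi_i$ by $\chi_i\circ\det$ translates the unramified-twist coordinate of $\pi_i$ in $M/M_0$ by $\chi_i(\unif_F)$, and the $f'$-th power then produces the claimed factor $\prod_i \chi_i^{f'}(\unif_F)$.
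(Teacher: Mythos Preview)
Your proposal is essentially correct and follows the same approach as the paper: both reduce the claim to the explicit description of the action of $\Theta_{m,m}$ given in Section~9 of~\cite{bernstein1}. The paper's proof is simply more direct in its citation---it invokes Theorem~9.4 of~\cite{bernstein1}, which identifies the action of $\Theta_{m,m}$ on $i_P^G\pi$ with that of a specific element $\theta_{m,s}\in\tZ_{(M_s,\pi_s)}$, and then notes that the twist behavior is immediate from the very definition of $\theta_{m,s}$ in that section. What you describe as ``the main obstacle''---pinning down the $f'$-th power in Bernstein--Deligne coordinates---is precisely what is packaged into that definition, so your outline and the paper's proof are the same argument at different levels of unpacking. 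Your detour through Theorem~\ref{thm:bernstein ind} is unnecessary: the identification of $\Theta_{m,m}$ with $\theta_{m,s}$ already lives at the level of the single block $\tZ_{(M_s,\pi_s)}$ and does not require the embedding into a tensor product.
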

\begin{proof}
For some $s$, the pair $(M,\pi)$ is conjugate to an unramified twist of one of the pairs $(M_s,\pi_s)$ described in section 9 of~\cite{bernstein1}.
Thus, by Theorem 9.4 of~\cite{bernstein1},
 the action of $\Theta_{m,m}$ on $\pi$ is via the element $\theta_{m,s}$ of $Z_{M_s,\pi_s}$ defined in section 9 of~\cite{bernstein1},
and the claim is immediate from the definition of $\theta_{m,s}$ in that section.
\end{proof}

Finally, let $m'$ and $m$ be two consecutive elements of $\{1,e_{q^{f'}}, \ell e_{q^{f'}}, \dots\}$, and
set $j = \frac{m}{m'}$.  Theorem 13.5 of~\cite{bernstein1} then provides a map:
$$\Ind_{m',m}: Z_m \rightarrow Z_{m'}^{\otimes j},$$
that is compatible with parabolic induction, in the sense that the action of $x$ in $Z_m$ on
$i_P^{\GL_{n_1m}(F)} \pi$ (where $P = MU$ is a parabolic such that $M$ is isomorphic to
$\GL_{n_1m'}(F)^j$) is induced by the action of $\Ind_{m',m}(x)$ on $\pi$.   The image of this map is not saturated
but we have:

\begin{theorem}[\cite{bernstein1}, Theorem 13.6] \label{thm:near saturation}
Let $y$ be an element of $Z_{m'}^{\otimes j}$ such that, for some $a$, $\ell^a y$ lies in
the image of $\Ind_{m',m}$.  Then there exists an element $\tilde y$ of $Z_m$, an element $x$
of $\overline{E}_{q^{f'},m,1}[T^{\pm 1}]$, and an integer $b > 0$ such that
$\Ind_{m',m}(x) = \ell^b(y - \Ind_{m',m}(\tilde y))$.
\end{theorem}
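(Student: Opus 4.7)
The plan is to exploit the explicit realization of $Z_m = E_m$ as a Hecke algebra of the compactly induced projective $\CP_m = \cInd_{K_m}^{\GL_{n_1m}(F)}(\tkappa_m \otimes \CP_{\sigma_m})$ from Sections~7 and~9 of \cite{bernstein1}, combined with a grading argument that reduces the theorem to a statement purely inside the finite-group ring $\overline{A}_{q^{f'},m,1}$. The guiding observation is that the ``error term'' $x$ one expects to find should live in $\overline{A}_{q^{f'},m,1}[T^{\pm 1}]$ precisely because this subalgebra is distinguished inside $Z_m$ as the span of $\Theta_{m,m}$-translates of the finite-group subring.

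I would first equip $Z_m$ with the $\ZZ$-grading analogous to the grading on $A_{F,n,1}$ at the end of Section~\ref{sec:affine}: a homogeneous element of degree $d$ is characterized by the property that twisting by an unramified character $\xi_\alpha$ multiplies its action on each block-constituent by $\alpha^d$. By Proposition~\ref{prop:theta action}, $\Theta_{m,m}$ is a homogeneous unit of positive degree while $\overline{A}_{q^{f'},m,1}$ sits entirely in degree zero, so that $\overline{A}_{q^{f'},m,1}[T^{\pm 1}]$ picks out exactly the $\Theta_{m,m}$-translates of the degree-zero subring. Equipping $Z_{m'}^{\otimes j}$ with the tensor-product multi-grading makes $\Ind_{m',m}$ degree-preserving, so one may assume $y$ is homogeneous. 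Multiplication by an appropriate product of powers of the $\Theta_{m',m'}$ in each tensor factor then shifts $y$ into degree zero; since $\Ind_{m',m}(\Theta_{m,m})$ differs from $\bigotimes_i \Theta_{m',m'}^{m/m'}$ only by a degree-zero element of $\overline{A}_{q^{f'},m,1}[T^{\pm 1}]$ (again by Proposition~\ref{prop:theta action}), this shift modifies $x$ only by an element of $\overline{A}_{q^{f'},m,1}[T^{\pm 1}]$, reducing to the case where every $d_i = 0$.

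In degree zero, the realization of $\CP_{\sigma_m}$ as the projection of a Gelfand--Graev representation of $\GL_{n_1m/f}(\FF_{q^f})$ to the block containing $\sigma_m$, recalled in Section~\ref{sec:bernstein} from Section~5 of~\cite{bernstein1}, identifies the degree-zero parts of $Z_m$ and $Z_{m'}^{\otimes j}$ with direct factors of $\overline{A}_{q^{f'},m,1}$ and $\overline{A}_{q^{f'},m',1}^{\otimes j}$ respectively, with $\Ind_{m',m}$ restricted to degree zero matching the finite-group Harish--Chandra induction of Section~\ref{sec:finite}. The residual claim becomes: if $\ell^a y$ lies in the image of this finite-group map, then $y$ agrees, modulo the image and up to a bounded power of $\ell$, with an element of the ``cuspidal factor'' of $\overline{A}_{q^{f'},m,1}$ (the direct factor cut out by $\ell$-regular semisimple classes with irreducible characteristic polynomial). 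Over $\overline{\CK}$ this follows at once from Lemma~\ref{lemma:char 0 saturation}. The main obstacle is the integral refinement: one must show that the $\ell$-primary obstruction to lifting from $\overline{\CK}$ to $W(k)$ is concentrated on precisely the cuspidal idempotents, so that it can be absorbed into an element $x \in \overline{A}_{q^{f'},m,1}[T^{\pm 1}]$. This is handled via the block-theoretic decomposition of $\overline{A}_{q^{f'},m,1}$ into $\ell$-regular semisimple idempotents from Section~\ref{sec:finite}, together with a careful matching, block by block, of the cokernel of finite-group Harish--Chandra induction with the cuspidal direct factor; the isomorphism of $\overline{A}_{q,n,s}$ with $\overline{A}_{q^d,n/d,1}$ at the end of Section~\ref{sec:finite} is the crucial input that lets one identify these cuspidal pieces explicitly.
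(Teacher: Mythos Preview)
This statement is not proved in the present paper at all: it is quoted as \cite{bernstein1}, Theorem~13.6, with no argument supplied here. There is thus no ``paper's own proof'' to compare your proposal against; the actual proof lives in Sections~7--13 of \cite{bernstein1} and uses the detailed Hecke-algebra structure of $E_m$ (support filtrations, the explicit relations among the elements $\Theta_{i,j}$, and the description of the $\CP_m$ as compact inductions).

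On the merits of your proposal: the grading reduction is a reasonable opening move, in the spirit of the grading argument at the end of Section~\ref{sec:affine}. But two steps are not justified. First, you assert that the degree-zero part of $Z_m$ is identified with (a direct factor of) $\overline{A}_{q^{f'},m,1}$. The paper only tells you that the image of $\overline{A}_{q^{f'},m,1}[T^{\pm 1}]$ in $E_m$ consists of functions supported on the double cosets $K_m z_{m,m}^r K_m$; it does not say that every degree-zero element of the Hecke algebra is supported on the identity double coset. There are in general other double cosets of trivial ``valuation'', and the corresponding Hecke operators sit in degree zero without lying in $\overline{A}_{q^{f'},m,1}$. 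So the reduction to a purely finite-group statement is not complete.

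Second, and more seriously, your final paragraph is not a proof but a placeholder for one. The sentence ``This is handled via the block-theoretic decomposition \ldots\ together with a careful matching, block by block, of the cokernel of finite-group Harish--Chandra induction with the cuspidal direct factor'' does not exhibit the element $x$, does not bound the exponent $b$, and does not explain why the $\ell$-primary obstruction is concentrated where you say it is. That integral control is exactly the content of the theorem, and it is what the machinery of \cite{bernstein1} is built to supply. Invoking the isomorphism $\overline{A}_{q,n,s} \cong \overline{A}_{q^d,n/d,1}$ from Section~\ref{sec:finite} does not by itself give you control of the cokernel of $\Ind_{m',m}$ modulo powers of $\ell$.
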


The map $\Ind_{m',m}$ is not injective, but its kernel has a rather simple structure:

\begin{proposition} \label{prop:kernel}
There exists an ideal $I_{m',m}$ of $\overline{E}_{q^{f'},m,1}$ such that the kernel of $\Ind_{m',m}$ is
equal to $I_{m',m}[\Theta_{m,m}^{\pm 1}]$.
\end{proposition}
\begin{proof}
Since $\overline{E}_{q^{f'},m,1}[\Theta_{m,m}^{\pm 1}]$ is saturated in $Z_m$ we can prove this after tensoring with $\overline{\CK}$.  We have
a decomposition 
$$Z_m \otimes \overline{\CK} \cong \prod_i \tZ_{(M_i,\tpi_i)}$$
where $(M_i,\tpi_i)$ run over the $\overline{\CK}$-inertial equivalence classes in the block corresponding to $[L_m,\pi_m]$.  In particular the partitions
corresponding to the $M_i$ are all $q^{f'}$-relevant.  Fix a factor in this product corresponding to a pair $(M_i,\tpi_i)$.  On this factor,
we can descrive the map $\Ind_{m',m}$ in the following way: let $(M_{ij},\tpi_{ij})$ run over the set of $M_{\nu}$-intertial equivalence classes of pairs
that are $\GL_{n_1m}$-inertially equivalent to $(M_i,\tpi_i)$, where $\nu$ is the partition $(m',\dots,m')$ of $m$ and $M_{\nu}$ is the corresponding Levi of $\GL_{n_1m}$.
Since $M_{ij}$ is a Levi contained in $M_{\nu}$ the pair $(M_{ij},\tpi_{ij})$ breaks up as a product of $\frac{m}{m'}$ pairs $(M_{ijk},\tpi_{ijk})$ in $\GL_{n_1m'}$.
On the factor $\tZ_{(M_i,\tpi_i)}$ of $Z_m \otimes \overline{\CK}$, $\Ind_{m,n}$ is the sum of the maps:
$$\Ind_{(M_{ij},\tpi_{ij})}: \tZ_{(M_i,\tpi_i)} \rightarrow \bigotimes_k \tZ_{(M_{ijk},\tpi_{ijk})}.$$
In particular $\Ind_{m',m}$ is injective on the factor $\tZ_{(M_i,\tpi_i)}$ if $M_i$ is a proper Levi subgroup and zero otherwise.  When $M_i$ is not a proper Levi then the pair
$(M_i,\tpi_i)$ gives a cuspidal inertial equivalence class, so $\tZ_{(M_i,\tpi_i)}$ is isomorphic to $\overline{\CK}[\Theta_{m,m}^{\pm 1}]$.  Thus the kernel of
$\Ind_{m',m} \otimes \overline{\CK}$ is equal to ${\tilde I}_{m',m}[\Theta_{m,m}^{\pm 1}]$, where ${\tilde I}_{m,m}$ is the ideal of $\overline{E}_{q^{f'},m,1} \otimes \overline{\CK}$
generated by the idempotents of the latter that correspond to cuspidal inertial equivalence classes $(M_i,\tpi_i)$.
\end{proof}

\section{The ring $R_{q,n}$} \label{sec:tame}

We now turn to the second principal object of study of this paper, which is a moduli space of representations of $W_F$.
We begin by studying spaces of tame representations.
Let $X_{q,n}$ be the affine $W(k)$-scheme
paramaterizing pairs of invertible $n$ by $n$ matrices $(\Fr,\sigma)$ such that $\Fr \sigma \Fr^{-1} = \sigma^q$, and let
$X_{q,n}^{0}$ be the connected component of $X_{q,n}$ containing the $k$-point $\Fr = \sigma = \Id_n$.  Let
$S_{q,n}$ (resp. $R_{q,n}$) be the ring of functions on $X_{q,n}$ (resp. $X_{q,n}^{0}$), so that $X_{q,n} = \Spec S_{q,n}$
and $X_{q,n}^{0} = \Spec R_{q,n}$.

\begin{lemma} \label{lemma:ell torsion}
Let $L$ be an algebraically closed field that is a $W(k)$-algebra and $x$ be an $L$-point of $X_{q,n}$
corresponding to a pair $(\Fr_x,\sigma_x)$
of elements of $\GL_n(L)$.  Then $x$ lies in
$X_{q,n}^0$ if, and only if, the eigenvalues of $\sigma_x$ are $\ell$-power roots of unity.
\end{lemma}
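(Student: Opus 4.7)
The plan is to construct a discrete invariant on $X_{q,n}$—the characteristic polynomial of the prime-to-$\ell$ semisimple part of $\sigma$—show it is locally constant, and identify its value on $X_{q,n}^0$ with $(t-1)^n$. To set up, note that the defining relation forces $\sigma_x$ to be conjugate to $\sigma_x^q$ at any geometric point, so iterating shows every eigenvalue $\lambda$ of $\sigma_x$ satisfies $\lambda^{q^{n!}-1} = 1$. Hence the eigenvalues are roots of unity of order prime to $p$, uniformly bounded by $q^{n!}-1$; write each as $\lambda = \zeta\mu$ with $\zeta$ of $\ell$-power order and $\mu$ of prime-to-$\ell$ order.

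For the forward implication, I will fix $N$ large enough that $\lambda^{\ell^N} = \mu$ for every eigenvalue $\lambda$ occurring in $X_{q,n}$ (uniformly possible by the bound on orders). Then $\chi_{\sigma^{\ell^N}}(t) \in R_{q,n}[t]$ is the invariant: at each geometric point it becomes $\prod_i(t - \mu_i)$, a product of cyclotomic polynomials $\Phi_d$ with $d \mid q^{n!}-1$ and $d$ coprime to $\ell$, taking only finitely many values. The crucial point is that distinct such values have distinct reductions modulo $\ell$, because prime-to-$\ell$ roots of unity inject from $W(\bk)$ into $\bk^\times$; thus for each coefficient $c$ of $\chi_{\sigma^{\ell^N}}(t)$ with possible values $\{a_1, \ldots, a_r\}$ on geometric points, the relation $\prod_j (c - a_j) = 0$ holds in $R_{q,n}^{\mathrm{red}}$ with pairwise comaximal factors (as $a_i - a_j \in W(k)^\times$), and CRT decomposes $\Spec R_{q,n}^{\mathrm{red}}$ into open-and-closed pieces on which the invariant is constant. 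Since the value at $(\Id, \Id)$ is $(t-1)^n$, the invariant equals $(t-1)^n$ on all of $X_{q,n}^0$, forcing every eigenvalue of $\sigma$ there to be an $\ell$-power root of unity.

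For the reverse implication, I would let $Z_{q,n} \subseteq X_{q,n}$ be the closed subscheme cut out by $\chi_{\sigma^{\ell^N}}(t) = (t-1)^n$, and reduce to showing $Z_{q,n}$ is connected—combined with the forward direction and $(\Id, \Id) \in Z_{q,n}$, this forces $Z_{q,n} = X_{q,n}^0$. Every geometric point over $\bar{\CK}$ specializes to one over $\bk$ (via integral closure in $\bar{\CK}$), so every irreducible component of $Z_{q,n}$ meets the special fiber, and connectedness of $Z_{q,n}$ reduces to that of $Z_{q,n} \otimes \bk$. Over $\bk$, the $\ell$-power condition forces $\sigma$ unipotent; setting $M = \sigma - I$, I consider the projection $\pi \colon Z_{q,n} \otimes \bk \to \mathfrak{N}_n$ to the nilpotent cone of $\mathfrak{gl}_n$. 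A direct expansion $(I+M)^q - I = M \cdot P(M)$ with $P(M) = q + \binom{q}{2}M + \cdots$ having invertible constant term $q \in \bk^\times$ shows $P(M)$ is a unit and $M$ is always conjugate to $(I+M)^q - I$, so $\pi$ is surjective and its fiber over $M$ is a torsor under the centralizer $C(M) \subseteq \GL_n$, which is connected as a nonempty open subset of the irreducible affine scheme of matrices commuting with $M$.

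The main obstacle will be the final connectedness deduction: because the fiber dimension of $\pi$ varies with the Jordan type of $M$, $\pi$ is not globally flat, so the standard "irreducible base plus connected fibers implies connected total space" principle needs care. I would proceed by stratifying $\mathfrak{N}_n$ by Jordan type—each stratum is a single $\GL_n$-orbit with irreducible (hence connected) preimage in $Z_{q,n} \otimes \bk$, and the closure of every stratum contains $\{0\}$, whose preimage $\GL_n \times \{I\}$ is connected and contains $(\Id, \Id)$; this forces all pieces of $Z_{q,n} \otimes \bk$ into a single connected component.
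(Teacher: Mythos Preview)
Your forward direction is essentially right, but one detail is off: the claim that distinct values $a_i, a_j$ of a \emph{single} coefficient of $\chi_{\sigma^{\ell^N}}$ satisfy $a_i - a_j \in W(k)^\times$ fails in general. For instance, with $\ell = 7$, $q = 13$, $n = 2$, the $q$-stable multisets $\{1,\zeta_3\}$ and $\{\zeta_3^2,-\zeta_3\}$ of prime-to-$7$ roots of unity have unequal traces that are congruent modulo $7$. What \emph{is} true is that distinct tuples of coefficients---equivalently, distinct multisets of prime-to-$\ell$ roots of unity---remain distinct modulo $\ell$, since Teichm\"uller reduction is injective on such roots; working with the full characteristic polynomial rather than coefficient-by-coefficient fixes this, and is essentially what the paper does via the map $X_{q,n}\to\AA^n_{W(k)}$.

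The reverse direction has two more substantial gaps. First, the assertion that every $\overline{\CK}$-point of $Z_{q,n}$ specializes to a $k$-point is simply false: the scheme is not proper over $W(k)$, and a point whose $\Fr_x$ has non-integral entries admits no such specialization. What you actually need is that every connected component of $Z_{q,n}$ meets the special fiber, and this requires an argument---the paper obtains it by using the $\GL_n$-action together with the observation that, for fixed $\sigma$ in Jordan form, the compatible $\Fr$'s form the invertible locus of a linear space, so one can move to an integral representative within the same component and then reduce. Second, in your stratification argument, the inclusion $0 \in \overline{O_\lambda}$ inside $\mathfrak N_n$ does \emph{not} by itself give $\overline{\pi^{-1}(O_\lambda)} \cap \pi^{-1}(\{0\}) \neq \emptyset$: closure does not commute with preimage under the non-proper map $\pi$, and here every stratum preimage has the same dimension $n^2$, so none contains another. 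To actually link the strata you must exhibit a degeneration inside $Z_{q,n}\otimes k$ from a point with $\sigma - I \in O_\lambda$ to one with $\sigma = I$; this can be done by first moving $\Fr$ within its (connected) linear fiber to a block-triangular representative and then conjugating by a diagonal one-parameter subgroup, but some explicit step of this kind is unavoidable and your sketch omits it.
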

\begin{proof}
Consider the map $X_{q,n} \rightarrow \AA^n_{W(k)}$ that takes a point $x$ to the coefficients
of the characteristic polynomial of $\sigma_x$.  Let $Y$ be the image of this map.  
For all $L$ and $x$, $\sigma_x$ is an element of $\GL_n(L)$ conjugate to its $q$th power, so its image in $Y(L)$
is a polynomial of degree $n$ whose roots, counted with multiplicities, are stable under the $q$th power map.  That is, every point
of $Y(L)$ corresponds to the characteristic polynomial of a diagonal matrix that is conjugate to its $q$th power.  Conversely,
given such a matrix $\sigma$ it is easy to construct an $L$-point $x$ of $X_{q,n}$ with $\sigma_x = \sigma$.

Let $\tY \subset \AA^n_{W(k)}$
be the space of diagonal matrices that are conjugate to their $q$th powers; we then have a map $\tY \rightarrow \AA^n_{W(k)}$
that sends such a matrix to the coefficients of its characteristic polynomial.  The argument of the previous paragraph shows that 
the (set-theoretic) image of $\tY$ is equal to $Y$.  On the other hand, $\tY(\overline{\CK})$ is a finite collection of points; indeed,
the entries of any diagonal matrix that is conjugate to its $q$th power are roots of unity of order bounded in terms of $q$ and $n$.  Thus the ``coordinates''
of each $\overline{\CK}$-point of $\tY$ are integral over $W(k)$, and every point of
$\tY(k)$ is in the closure of some point of $\tY(\overline{\CK})$.  It follows that the same is true for $Y$; in particular $Y$ is the
closure of a finite set of $\overline{\CK}$-points, and the closure of any $\overline{\CK}$-point of $Y$ meets the special fiber of $Y$.
Therefore, the connected component $Y^0$ of $Y$ containing the image of $X_{q,n}^0$ is the closure of the set of
$\overline{\CK}$-points of $Y$ that ``specialize'' mod $\ell$ to the characteristic polynomial $(X-1)^n$ of the identity matrix.
The only $k$-point of this component arises from the characteristic polynomial of the identity matrix, and the $\overline{\CK}$-points
of this component correspond to characteristic polynomials of elements of $\tY(\overline{\CK})$ whose roots reduce to $1$ modulo $\ell$.
The roots of such a polynomial are $\ell$-power roots of unity.  Therefore, for $x$ in $X_{q,n}^0(L)$ the roots of the characteristic
polynomial of $\sigma_x$ are $\ell$-power roots of unity, as required.

Conversely, let $x$ be an $L$-point of $X_{q,n}$, and suppose that the eigenvalues of $\sigma_x$ are $\ell$-power
roots of unity.  Note that $\GL_n(L)$ acts on $X_{q,n}(L)$, by conjugation on both $F$ and $\sigma$,
and this action preserves the connected components.  We may thus assume $\sigma_x$ is in Jordan normal form; in particular
its entries lie in $k$ or an integral extension $\OO$ of $W(k)$.  Moreover, for a fixed $\sigma_x$,
the set of $\Fr_x$ such that $\Fr_x \sigma_x = \sigma_x^q \Fr_x$
is a linear space; there is thus an invertible $\Fr'_x$ whose entries lie in $k$ or $W(k)$, such that $\Fr'_x \sigma_x = \sigma^q_x \Fr'_x$
and $(\Fr'_x,\sigma_x)$ lies on the same connected component as $x$.

If $L$ has characteristic $\ell$, the above construction yields a $k$-point of $X_{q,n}$ in the same connected component
as $x$.  If $L$ has characteristic zero, the closure of the point $(\Fr',\sigma)$ constructed above contains a $k$-point
$(\Fr'',\sigma')$ of $X_{q,n}$ in the same connected component as $x$.  Moreover, $\sigma'$ is unipotent and in Jordan normal form.
Thus in the closure of orbit of $(\Fr'',\sigma')$ under conjugation by
diagonal matrices there is a point where $\sigma$ is the identity.  It is clear that such a point lies in the connected
component of the $k$-point $x$ where $\Fr_x = \sigma_x = \Id_n$.
\end{proof}

The ring $R_{q,n}$ is rather well-behaved from an algebraic standpoint.  In particular, one has:
\begin{proposition}
The ring $R_{q,n}$ is reduced and locally a complete intersection.  Moreover, $R_{q,n}$
is flat as a $W(k)$-algebra.
\end{proposition}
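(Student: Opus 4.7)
The plan is to realize $X_{q,n}^0$ as a local complete intersection of the expected relative dimension $n^2$ inside the smooth $W(k)$-scheme $\GL_n \times \GL_n$, and then to deduce the three claims by standard commutative algebra.

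The scheme $X_{q,n}$ is cut out in $\GL_n \times \GL_n$ (of relative dimension $2n^2$ over $W(k)$) by the $n^2$ scalar equations obtained from the matrix identity $\Fr\sigma - \sigma^q \Fr = 0$. Hence every irreducible component of $X_{q,n}^0$ has relative dimension at least $n^2$, with equality iff the defining equations form a regular sequence along that component. To prove equality, I would stratify geometrically via the projection $\pi \colon X_{q,n}^0 \to \GL_n$, $(\Fr,\sigma)\mapsto\sigma$. For any geometric point $\sigma$, the fiber $\pi^{-1}(\sigma)$ is either empty or a torsor under the centralizer $Z_{\GL_n}(\sigma)$ (as the affine-linear set of solutions to $\Fr\sigma = \sigma^q \Fr$), of dimension $\dim Z_{\GL_n}(\sigma)$. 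On each irreducible component of $X_{q,n}^0$, Lemma~\ref{lemma:ell torsion} forces $\sigma$ to have $\ell$-power-root-of-unity eigenvalues, so its characteristic polynomial takes values in a geometrically discrete subset of $\AA^n$; irreducibility then forces this polynomial to be constant on the component, fixing the conjugacy class of the semisimple part $\sigma_s$. Stratifying further by the unipotent part $\sigma_u \in Z_{\GL_n}(\sigma_s)$, each stratum fibers over a locally closed subset of $\GL_n$ of dimension $n^2 - \dim Z_{\GL_n}(\sigma)$ with fibers of dimension $\dim Z_{\GL_n}(\sigma)$, for total dimension exactly $n^2$. This establishes the LCI (hence Cohen-Macaulay) property.

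Flatness over $W(k)$ then follows by miracle flatness: $R_{q,n}$ is Cohen-Macaulay, $W(k)$ is regular of dimension one, and the above dimension computation applies in either characteristic to give fibers of constant dimension $n^2$. For reducedness, since Cohen-Macaulay rings have no embedded primes it suffices to check generic reducedness, which I would verify by a direct tangent-space computation at a point $(\Fr,\sigma)$ with $\sigma$ regular semisimple and eigenvalues forming a single orbit of distinct $\ell$-power roots of unity under $x \mapsto x^q$: there the defining equations are transverse and $X_{q,n}^0$ is smooth over $W(k)$ of relative dimension $n^2$, and this smooth locus is dense in every irreducible component.

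The main obstacle I expect is the dimension estimate on the special fiber, where in characteristic $\ell$ one must carefully track the Jordan structure of $\sigma$ and verify that no stratum contributes unexpectedly large dimension. The uniform identity $(n^2 - \dim Z(\sigma)) + \dim Z(\sigma) = n^2$ is the crux, but checking that the strata glue correctly across $\Spec W(k)$ (and that the discrete invariants such as characteristic polynomial and Jordan type behave well under specialization) requires some care. Once this uniform dimension bound is in hand, all three claims follow as outlined.
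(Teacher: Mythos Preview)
Your approach is essentially the paper's: cut $X_{q,n}$ by $n^2$ equations in a smooth ambient space of relative dimension $2n^2$, bound the fiber dimension at $n^2$ by stratifying according to the conjugacy class of $\sigma$ and using that the nonempty fibers of $(\Fr,\sigma)\mapsto\sigma$ are torsors under $Z_{\GL_n}(\sigma)$, conclude LCI, and then use that Cohen--Macaulay rings have no embedded primes to reduce reducedness to generic reducedness. Your flatness argument via miracle flatness is a mild repackaging of the paper's observation that no irreducible component lies in the special fiber; both are fine.

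There is one genuine gap. For generic reducedness you propose to exhibit a smooth point where $\sigma$ is regular semisimple with eigenvalues forming a single $q$-power orbit of distinct $\ell$-power roots of unity, and then assert that ``this smooth locus is dense in every irreducible component.'' But such points do not meet every irreducible component. In characteristic zero the characteristic polynomial of $\sigma$ is locally constant (as you yourself note), so the components of $X_{q,n}^0\otimes\overline{\CK}$ are indexed by the finitely many admissible characteristic polynomials; in particular there is a component on which the characteristic polynomial is $(x-1)^n$, i.e.\ $\sigma$ is unipotent, and no regular semisimple $\sigma$ appears there. Your tangent-space calculation therefore says nothing about that component (or any component with a repeated eigenvalue). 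To close the gap you must produce, for each admissible characteristic polynomial, a smooth point on the corresponding component---for instance by allowing $\sigma$ to be regular but not semisimple and redoing the Jacobian computation---or argue uniformly that the Jacobian has full rank on a dense open of each component. The paper does not carry this out either: it cites Choi's thesis (\cite{choithesis}, Theorem~3.0.13) for the statement that $(\Spec R_{q,n})_{\mathfrak m}[\tfrac{1}{\ell}]$ is generically smooth for every maximal ideal $\mathfrak m$.
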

\begin{proof}
This argument is a slight elaboration of an argument due to Choi~\cite{choithesis}.
We give a sketch here.

First note that $X_{q,n}$ is given by $n^2$ relations in a space of dimension $2n^2 + 1$.
Consider the map $X_{q,n} \rightarrow \AA^{n^2}_{W(k)}$ that sends a point $x$ to
the matrix $\sigma_x$.
Let $L$ be an algebraically closed field that is a $W(k)$-algebra, and let $x$ be an $L$-point
of $X_{q,n}$. 

The group $\GL_n(L)$ acts on the set of $L$-points of $X_{q,n}$ by conjugation.
Consider the locally closed subset $U_{\sigma_x}$ of $\Spec \AA^{n^2}_L$ consisting of those
$\sigma'$ conjugate to $\sigma_x$.  For any $L$-point $\sigma'$ of $U_{\sigma_x}$, the fiber
of $X_{q,n} \times_{W(k)} L$
over $\sigma'$ consists of pairs $(\Fr' h, \sigma')$, where $\Fr'$ is a fixed element of $\GL_n$ such
that $\Fr' \sigma' (\Fr')^{-1} = (\sigma')^q$ and $h$ commutes with $\sigma'$

In particular, the dimension
of the preimage of $U_{\sigma}$ in $X_{q,n} \times_{W(k)} L$ is equal to
the dimension of $U_{\sigma}$ plus the dimension of the stabilizer of $\sigma$ under conjugation;
this is clearly $n^2$.   As $\sigma$ varies over a finite list of conjugacy classes, the
preimages of the $U_{\sigma}$ cover $X_{q,n} \times_{W(k)} L$; thus $X_{q,n} \times_{W(k)} L$
is equidimensional of dimension $n^2$.  On the other hand the dimension of $X_{q,n}$ is at least
$n^2 + 1$.  It follows that the Zariski closures of the preimages of sets $U_{\sigma}$ are irreducible components of
$X_{q,n}$, and that no irreducible component of $X_{q,n}$ is contained in the special fiber (as it
would then be a component of $X_{q,n} \times_{W(k)} k$ of dimension at most $n^2$).  It also
follows that every irreducible component of $X_{q,n}$ has dimension $n^2 + 1$, because if we had
a component of larger dimension then its base change to $\overline{\CK}$ would have dimension
greater than $n^2$.  In particular $X_{q,n}$ is a complete intersection.  It follows that $R_{q,n}$
is a local complete intersection.

An argument of Choi (\cite{choithesis}, Theorem 3.0.13) shows that for any maximal ideal
$m$ of $R_{q,n}$, $(\Spec R_{q,n})_m[\frac{1}{\ell}]$ is
generically smooth; in particular $X^0_{q,n}$ is generically reduced.
By the unmixedness theorem the local complete intersection $X^0_{q,n}$ has no embedded points, so
$R_{q,n}$ is reduced.  As the generic points of $\Spec R_{q,n}$ all have characteristic zero,
we may conclude that $R_{q,n}$ is flat over $W(k)$.
\end{proof}

We have a universal pair of matrices $(\Fr,\sigma)$ in $\GL_n(R_{q,n})$.  The above result immediately implies:

\begin{corollary} \label{cor:continuity}
There exists a power $\ell^a$ of $\ell$ such that
$\sigma^{\ell^a}$ is unipotent in $\GL_n(R_{q,n})$.
\end{corollary}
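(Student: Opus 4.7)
The plan is to combine Lemma~\ref{lemma:ell torsion} with the structural information about the image of $X_{q,n}^0$ obtained in its proof to produce a \emph{uniform} exponent, and then to deduce the statement by a Jacobson-radical argument. First I would extract from the proof of Lemma~\ref{lemma:ell torsion} the fact that the image of $X_{q,n}^0$ under the map $x \mapsto (\text{coefficients of the characteristic polynomial of } \sigma_x)$ lands in the component $Y^0 \subseteq \AA^n_{W(k)}$ whose $\overline{\CK}$-points form a \emph{finite} set, each corresponding to a polynomial whose roots are $\ell$-power roots of unity. Finiteness of this set immediately yields a single integer $a$ such that every root occurring at every $\overline{\CK}$-point of $X_{q,n}^0$ is an $\ell^a$-th root of unity.

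Next I would show that the characteristic polynomial $p(X) \in R_{q,n}[X]$ of $\sigma^{\ell^a}$ reduces to $(X-1)^n$ modulo every maximal ideal of $R_{q,n}$. For maximal ideals whose residue field is a finite extension of $\CK$ this follows from the preceding paragraph, since raising each eigenvalue to the $\ell^a$-th power yields $1$. For maximal ideals with residue characteristic $\ell$ (the $k$-points), Lemma~\ref{lemma:ell torsion} says the eigenvalues of $\sigma_x$ are $\ell$-power roots of unity, but in characteristic $\ell$ the only such root is $1$, so $\sigma_x$ (and a fortiori $\sigma_x^{\ell^a}$) is already unipotent.

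Finally I would invoke the fact that $R_{q,n}$ is a finitely generated $W(k)$-algebra, hence Jacobson, and reduced; therefore the intersection of its maximal ideals equals its nilradical, which is zero. Thus the coefficients of $p(X) - (X-1)^n$, each vanishing modulo every maximal ideal, are zero in $R_{q,n}$, so $p(X) = (X-1)^n$ in $R_{q,n}[X]$. Cayley--Hamilton applied in $\operatorname{Mat}_n(R_{q,n})$ then gives $(\sigma^{\ell^a} - \Id_n)^n = 0$, which is the required unipotence of $\sigma^{\ell^a}$.

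The only substantive obstacle is pinning down the uniformity of $a$; this is secured essentially for free by the finiteness of $Y^0(\overline{\CK})$ established en route to Lemma~\ref{lemma:ell torsion}. The passage from a pointwise identity to an identity in $R_{q,n}[X]$ is then standard, relying only on the fact that a reduced finitely generated $W(k)$-algebra has trivial Jacobson radical.
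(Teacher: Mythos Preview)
Your argument has a genuine gap in the final step. The inference ``$R_{q,n}$ is a finitely generated $W(k)$-algebra, hence Jacobson'' is false: $W(k)$ is a discrete valuation ring and therefore \emph{not} Jacobson (the zero ideal is prime but the only maximal ideal is $(\ell)$, so $(0)$ is not an intersection of maximal ideals), and the general Nullstellensatz only propagates the Jacobson property from a base ring that already has it. For a concrete counterexample to the conclusion you draw, note that $W(k)\times W(k)\cong W(k)[x]/(x^2-x)$ is a reduced, $\ell$-torsion free, finitely generated $W(k)$-algebra whose Jacobson radical is $(\ell)\times(\ell)\neq 0$. So vanishing of the coefficients of $p(X)-(X-1)^n$ at every closed point does not force them to be zero in $R_{q,n}$.

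The repair is immediate once you trade maximal ideals for \emph{minimal} primes, and this is precisely the paper's route. Since $R_{q,n}$ is reduced, the intersection of its minimal primes is zero, so $R_{q,n}$ embeds in the product of the residue fields at its finitely many generic points. Flatness over $W(k)$ means $\ell$ is a nonzerodivisor, hence lies in no minimal prime, so every generic point has residue characteristic zero and Lemma~\ref{lemma:ell torsion} applies (after passing to an algebraic closure of the residue field). Your Step~1, extracting a uniform $a$ from the finiteness of $Y^0(\overline{\CK})$, is correct and in fact slightly more explicit than what the paper writes; alternatively one simply takes the maximum over the finitely many generic points. Either way, one gets a single $a$ with $\sigma^{\ell^a}$ having characteristic polynomial $(X-1)^n$ at every generic point, and the embedding then forces the identity in $R_{q,n}[X]$, whence Cayley--Hamilton finishes as you wrote.
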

\begin{proof}
Since $R_{q,n}$ is reduced and flat over $W(k)$, it suffices to check that $\sigma^{\ell^a}$
is unipotent for some $a$ at each of the generic points of $\Spec R_{q,n}$, all of which lie in characteristic zero.
This is an immediate consequence of Lemma~\ref{lemma:ell torsion}.
\end{proof}

Let $L$ be a finite extension of $\CK$.  We call an $L$-point of $X^0_{q,n}$ {\em integral}
if the corresponding map $R_{q,n} \rightarrow L$ factors through the ring of integers $\OO_L$.

\begin{lemma} \label{lemma:integral}
Let $x$ be an $L$-point of $X^0_{q,n}$, and suppose that the eigenvalues of
$\Fr_x$ lie in $\OO_{L'}^{\times}$ for some finite extension $L'$ of $L$.  
Then there is an integral point of $X^0_{q,n}$ in the $\GL_n$-orbit of $x$.
\end{lemma}
\begin{proof}
Extending $L$ if necessary, we may assume that the eigenvalues of $\sigma_x$ are in $L$, and hence
$\OO_L$.  Then (for instance, by putting $\sigma_x$ in Jordan normal form) we can find an $\OO_L$-sublattice
$M$ of $L^n$ preserved by $\sigma_x$.  Using $\Fr_x \sigma_x \Fr_x^{-1} = \sigma_x^q$, we find that
$\Fr_x M$, $\Fr_x^2 M$, etc. are also preserved by $\sigma_x$.  Consider the lattice $M'$ given by
$M + \Fr_x M + \dots + \Fr_x^{n-1} M$; it is clearly preserved by $\sigma_x$.  On the other hand, since
$\Fr_x$ is annihilated by a polynomial with integral coefficients, $\Fr_x^n M$ is contained in $M'$,
and hence $\Fr_x M'$ is contained in $M'$.  Since $\Fr_x$ has unit determinant we must have $\Fr_x M' = M'$.
Thus $M'$ is stable under both $\Fr_x$ and $\sigma_x$.  Choosing a
basis for $M'$, we find an integral point of $X^0_{q,n}$ in the same $\GL_n$-orbit as $x$.
\end{proof}

\begin{lemma}
For any positive integer $m$, and any element $\lambda$ of $\OO_L^{\times}$, there is an element $g_{m,\lambda}$ of $\GL_m(L)$, with unit eigenvalues, such
that $g_{m,\lambda} J_{m,\lambda^q} g_{m,\lambda}^{-1} = J_{m,\lambda}^q$, where $J_{m,\lambda}$ is the unipotent Jordan block of size $m$.
\end{lemma}
\begin{proof}
The matrices $J_{m,\lambda^q}$ and $J_{m,\lambda}^q$ are regular with the same eigenvalues, hence conjugate by some $g' \in \GL_m(L)$.
Since $J_{m,\lambda^q}$ is contained in a unique Borel subgroup of $\GL_m$ (namely, the standard one), the same is true of $J_{m,\lambda^q}$.
Thus $g'$ normalizes the standard Borel, so $g'$ is upper triangular.  The eigenvalues of $g'$ are thus given by its diagonal entries $g'_1,\dots,g'_m$.
Comparing the $(i,i+1)$ entries of $J_{m,\lambda}^q$ and $J_{m,\lambda^q}$ we find that that $\frac{g'_{i+1}}{g'_i} = \lambda^{q-1} q$.  In
particular, multiplying $g'$ by a suitable scalar we may assume $g'$ has integral eigenvalues, as desired.
\end{proof}

\begin{prop} The images of the integral points of $X^0_{q,n}$ are dense in $X^0_{q,n}$.
\end{prop}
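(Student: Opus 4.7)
The plan is to combine flatness of $R_{q,n}$ over $W(k)$, which reduces the question to Zariski density inside the generic fiber, with the orbit-theoretic description of the components of $X^0_{q,n} \otimes \overline{\CK}$ from the preceding proposition, which reduces it to producing one integral point per component and then spreading it out via the conjugation action of $\GL_n(\OO)$, where $\OO := \OO_{\overline{\CK}}$.

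By flatness, and the fact (shown in the preceding proposition) that no irreducible component of $X^0_{q,n}$ lies in the special fiber, the generic fiber $X^0_{q,n} \otimes_{W(k)} \overline{\CK}$ is Zariski dense in $X^0_{q,n}$, so it suffices to prove density there. By that proposition's proof together with Lemma~\ref{lemma:ell torsion}, the irreducible components of $X^0_{q,n} \otimes \overline{\CK}$ are indexed by the finitely many $\GL_n$-conjugacy classes of matrices $\sigma$ whose eigenvalues are $\ell$-power roots of unity, the component attached to $[\sigma]$ being the Zariski closure of the $\GL_n(\overline{\CK})$-orbit of the preimage of that class. Fixing such a class, I take $\sigma_0$ in Jordan normal form, so that $\sigma_0 \in M_n(\OO)$. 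Since $q$ is prime to $\ell$, the map $\zeta \mapsto \zeta^q$ permutes $\ell$-power roots of unity and hence permutes the eigenspaces of $\sigma_0$ into $q$-orbits; nonemptiness of the component forces the eigenspaces in any one $q$-orbit to share a common dimension. It is then straightforward to write down an invertible $\Fr_0$ with entries in $\{0,1\} \subset \OO$ (a block permutation matrix cycling through the eigenspaces in each $q$-orbit) satisfying $\Fr_0 \sigma_0 \Fr_0^{-1} = \sigma_0^q$. The pair $(\Fr_0, \sigma_0)$ is then an integral point on the component indexed by $[\sigma_0]$.

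Conjugation by any $g \in \GL_n(\OO)$ preserves integrality, so $\GL_n(\OO) \cdot (\Fr_0, \sigma_0)$ consists of integral points; since the orbit morphism $\GL_n \to X^0_{q,n}$ sends Zariski-dense subsets to Zariski-dense subsets of its image, it is enough to check that $\GL_n(\OO)$ is Zariski dense in $\GL_n(\overline{\CK})$. The Zariski closure $H \subseteq \GL_n(\overline{\CK})$ of $\GL_n(\OO)$ is a closed subgroup containing $\GL_n(W(k))$. Since $W(k)$ and $W(k)^\times$ are infinite, $H$ contains the Zariski closures of $U^\pm(W(k))$ and of $T(W(k)^\times)$, which are (respectively) all of $U^\pm(\overline{\CK})$ (because each $U^\pm$ is an affine space) and all of $T(\overline{\CK})$. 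Their product is the open dense big Bruhat cell in $\GL_n(\overline{\CK})$, forcing $H = \GL_n(\overline{\CK})$. The only genuinely nontrivial step is the explicit construction of $\Fr_0$; everything else is formal.
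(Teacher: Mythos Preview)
There is a genuine gap in the final step. You produce one integral point $(\Fr_0,\sigma_0)$ on each irreducible component and then spread it by the $\GL_n(\OO)$-action, arguing that the image of $\GL_n(\OO)$ under the orbit map is Zariski dense in the image of $\GL_n(\overline{\CK})$. That is true, but the image of $\GL_n(\overline{\CK})$ is only the single $\GL_n$-orbit of $(\Fr_0,\sigma_0)$, not the whole component. These never coincide: the stabilizer of any pair $(\Fr,\sigma)$ contains the scalars, so every $\GL_n$-orbit has dimension at most $n^2-1$, while (as the preceding proposition shows) each component of $X^0_{q,n}\otimes\overline{\CK}$ has dimension exactly $n^2$. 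Concretely, on the component with $\sigma=\Id$ the fiber is all of $\GL_n$, and the orbit of your chosen $(\Fr_0,\Id)$ is just the conjugacy class of $\Fr_0$. So your argument only proves density of integral points inside a proper closed subvariety of each component.

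The paper's proof sidesteps this by using the preceding lemma rather than an explicit construction: since $\GL_n(\OO)$ is Zariski dense in $\GL_n$, the closure of the integral locus is $\GL_n$-stable, hence a union of orbits; by that lemma it therefore contains every point $x$ whose $\Fr_x$ has integral eigenvalues. One then checks that such points are dense fiber-by-fiber over $\sigma$, using that each fiber is an open subset of an affine space (so its $\OO$-points, which automatically give integral eigenvalues for $\Fr$, are dense). Your argument can be repaired along the same lines: instead of a single $(\Fr_0,\sigma_0)$, note that for your fixed integral $\sigma_0$ the entire fiber $\{\Fr:\Fr\sigma_0\Fr^{-1}=\sigma_0^q\}$ is an open in an affine space defined over $\OO$, so its $\OO$-points are already Zariski dense in it; then spread by $\GL_n(\OO)$ as you do.
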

\begin{proof}

Fix a point $(\Fr_x,\sigma_x)$ of $X^0_{q,n}$.  
After conjugating $\sigma_x$ appropriately we may assume that $\sigma_x$ is in Jordan normal form (and thus in particular
has integral entries, since we have shown that the eigenvalues of $\sigma_x$ are roots of unity).  Moreover, since $\sigma_x$
is conjugate to its $q$th power, for any eigenvalue $\lambda$ of $\sigma$ there is a size-preserving bijection between
the Jordan blocks of $\sigma_x$ of eigenvalue $\lambda$ and those of eigenvalue $\lambda^q$.  Let $(m_i,\lambda_i)$ denote the
size and eigenvalue of the $i$th Jordan block of $\sigma_x$.  Then we can find a permutation matrix
$w$ such that $w \sigma_x w^{-1}$ is also in Jordan normal form, but where the $i$th Jordan block is of size $m_i$ with eigenvalue
$\lambda_i^q$.  Let $g$ be the block diagonal matrix whose $i$th block is the matrix $g_{m_i,\lambda_i}$ from the above lemma.
Then $g w \sigma_x (gw)^{-1} = \sigma_x^q$.  Moreover $gw$ has unit eigenvalues, as some power of $gw$ is block diagonal with
blocks given by powers of the matrices $g_{m_i,\lambda_i}$.  Thus by Lemma~\ref{lemma:integral} we can find an integral point $(\Fr'_x,\sigma'_x)$
of $X^0_{q,n}$ in the $\GL_n$-orbit of the point $(gw,\sigma_x)$.

Now consider the condition $g' \sigma'_x = \sigma'_x g'$, for arbitrary matrices $g'$.  This is a linear condition on $g'$ with
coefficients in $\OO_L$.  The scheme parameterizing such $g'$ is not quite a vector space scheme over $\OO_L$ (it need not be flat over $\OO_L$),
but the closure of its general fiber is such a scheme.  Let $U$ be the open subscheme of this closure consisting of
invertible $g'$.  Then $U$ contains the identity in particular, so its special fiber is nonempty.  However, in an open subset
of a vector space scheme over $\OO_L$ whose special fiber is nonempty, the $\OO_L$-points form a dense subset.  Thus integral
points are dense in $U$.

On the other hand, the points $(\Fr'_x u,\sigma'_x)$, as $u$ runs over
the integral points of $U$, are all integral points of $X^0_{q,n}$, and (since integral points of $U$ are dense in $U$)
their closure is the set of all points
$(\Fr'_y,\sigma'_x)$ in $X^0_{q,n}$.  Conjugating by integral points of $\GL_n$, which are clearly dense in $\GL_n$, we find that
the closure of the integral points contains the entire locus of points $(\Fr''_x, \sigma''_x)$ with $\sigma''_x$ conjugate to
$\sigma_x$.  Since $\sigma_x$ was chosen arbitrarily the result follows.
\end{proof}

\begin{cor} The ring $R_{q,n}$ is $\ell$-adically separated; that is, the intersection
of the ideals $\ell^i R_{q,n}$ is zero.
\end{cor}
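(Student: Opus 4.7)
The plan is to use the density of integral points (the previous proposition) together with the fact that $R_{q,n}$ is reduced. The strategy runs as follows.

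First I would take an arbitrary element $f \in \bigcap_{i \geq 1} \ell^i R_{q,n}$ and evaluate it at an integral point. By definition, an integral $L$-point of $X^0_{q,n}$ corresponds to a homomorphism $x: R_{q,n} \to \OO_L$, where $\OO_L$ is the ring of integers of some finite extension $L$ of $\overline{\CK}$. Since $W(k)$ is a complete discrete valuation ring with uniformizer $\ell$, and $\OO_L$ is a discrete valuation ring in which $\ell$ has positive valuation, one has $\bigcap_{i \geq 1} \ell^i \OO_L = 0$. Thus $x(f) = 0$, so $f$ lies in the prime ideal $\ker(x) \subset R_{q,n}$.

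Next I would exploit density. The previous proposition says the images of the integral points are dense in $X^0_{q,n} = \Spec R_{q,n}$, so the intersection of the prime ideals $\ker(x)$ coming from integral points $x$ is contained in the nilradical of $R_{q,n}$. Since $R_{q,n}$ is reduced (established above in the paper), this nilradical is zero. Combining with the previous step, $f \in \bigcap_x \ker(x) = 0$, as desired.

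There is no real obstacle here; the only small subtlety is to be sure that ``integral point'' does yield a map landing in a genuine DVR where $\ell$ is a non-unit, so that the $\ell$-adic intersection on the target vanishes. This is immediate from the definition of integral point given just before the previous proposition, together with the fact that $\OO_L$ is a DVR since $k$ has characteristic $\ell$ and $\overline{\CK}$ has characteristic zero with residue characteristic $\ell$. So the corollary is essentially a clean consequence of (i) $R_{q,n}$ reduced, (ii) density of integral points, and (iii) $\ell$-adic separatedness of the complete DVR targets.
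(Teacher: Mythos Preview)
Your argument is correct and is essentially identical to the paper's own proof: take $f$ divisible by every power of $\ell$, observe that its image under any integral point $x: R_{q,n} \to \OO_L$ lies in $\bigcap_i \ell^i \OO_L = 0$, and then use density of integral points plus reducedness to conclude $f=0$. The only quibble is that $\OO_L$ need not literally be a DVR (the paper's phrasing ``finite extension of $\overline{\CK}$'' is a bit odd), but all you need is that $\ell$ has positive finite valuation in $\OO_L$, which is clear.
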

\begin{proof}
Let $f$ be an element of $R_{q,n}$ that is divisible by $\ell^i$ for all $i$.  Then, for any integral point
$x: R_{q,n} \rightarrow \OO_L$, the image $x(f)$ is divisible by $\ell^i$ for all $i$ and is therefore zero.
In other words, $f$ vanishes on a dense subset of $X_{q,n}^0$.  Since $X_{q,n}^0$ is reduced, $f$ is zero.
\end{proof}

Now fix a Frobenius element $\tFr$ in $W_F$, and a topological generator $\tsigma$
of the quotient $I_F/I_F^{(\ell)}$.  Let $t_{\ell}$ be the isomorphism of $I_F/I_F^{(\ell)}$ with the additive
group of ${\mathbb Z}_{\ell}$ that takes $\tsigma$ to $1$.  By 
Corollary~\ref{cor:continuity}, for some positive integer $a$ the matrix $\sigma^{\ell^a}$ in $\GL_n(R_{q,n})$ 
is unipotent; that is, its characteristic polynomial is $(X - 1)^n$. The following lemma allows us to make sense
of $(\sigma^{\ell^a})^b$ for any $b \in \ZZ_{\ell}$:

\begin{lemma} Let $R$ be a flat, $\ell$-adically separated $\ZZ_{\ell}$-algebra, and $M \in \GL_n(R)$ such that $(M - 1)^n = 0$.  Then
there exists a unique $\ell$-adically continuous homomorphism $\phi_M: \ZZ_{\ell} \rightarrow \GL_n(R)$
such that for all $b \in \ZZ$, $\phi_M(b) = M^b$.
\end{lemma}
\begin{proof}
Consider the power series $\exp t \log (1 + X)$ in $\QQ[t][[X]]$, and let $p_i(t)$ be the coefficient of $X^i$ in
this power series.  For any $i$, and any integer $b$, Let $N_i$ be the $(i + 1)$ by $(i+1)$ Jordan block with eigenvalue zero; then 
$p_i(b)$ is the upper right entry of $(1 + N_i)^b$, and is thus an integer.  In particular each $p_i$ is a $\ZZ_{\ell}$-valued function
on $\ZZ_{\ell}$.  Given $M$ as above, and $t \in \ZZ_{\ell}$, we may thus define $\phi_M$ by 
$$\phi_M(t) = 1 + p_1(t) (M - 1) + \dots + p_{n-1}(t) (M - 1)^{n-1},$$
and it is clear that this has the claimed properties.
\end{proof}

(Recall that for an $\ell$-adically separated ring $A$, and a locally profinite group $H$, a representation $\rho: H \rightarrow \GL_n(A)$
is $\ell$-adically continuous if, for all positive integers $i$, the preimage of the subgroup $\Id + \ell^i M_n(A)$ of $\GL_n(A)$
is open in $H$.)

We will henceforth write $(\sigma^{\ell^a})^b$ for $\phi_{(\sigma^{\ell^a})}(b)$, given $b \in \ZZ_{\ell}$.

We thus have an $\ell$-adically continous representation
$\rho_{F,n}: W_F \rightarrow \GL_n(R_{q,n})$ defined by
$$\rho_{F,n}(\tFr^i w) = \Fr^i \sigma^j (\sigma^{\ell^a})^b,$$
for any $w \in I_F$ and any $j \in \ZZ$, $b \in \ZZ_{\ell}$ such that $j + \ell^a b = t_{\ell}(w).$
Note that, by the above lemma, this is the {\em unique} $\ell$-adically continuous representation
that takes $\tFr$ to $\Fr$ and $\tsigma$ to $\sigma$.

The pair $(R_{q,n}, \rho_{F,n})$ has the following universal property, which is easily seen to characterize
the pair up to isomorphism:

\begin{prop} \label{prop:tame universal}
For any finitely generated, $\ell$-adically separated $W(k)$-algebra $A$, and any framed, $\ell$-adically continuous
representation $\rho: W_F/I_F^{(\ell)} \rightarrow \GL_n(A)$, there is a unique map: $R_{q,n} \rightarrow A$
such that $\rho$ is the base change of $\rho_{F,n}$.
\end{prop}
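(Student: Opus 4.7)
The proof has two parts: uniqueness and existence. Uniqueness is immediate because $X_{q,n}$ is a closed subscheme of $\GL_n \times \GL_n$ over $W(k)$ cut out by the relations $\Fr\sigma = \sigma^q\Fr$, so its coordinate ring—and hence its connected-component quotient $R_{q,n}$—is generated as a $W(k)$-algebra by the matrix entries of the universal $\Fr$ and $\sigma$ together with $\det(\Fr)^{-1}$ and $\det(\sigma)^{-1}$. Any map $R_{q,n}\to A$ whose base change of $\rho_{F,n}$ is $\rho$ is forced on these generators.

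For existence, set $\Fr_A := \rho(\tilde\Fr)$ and $\sigma_A := \rho(\tsigma)$. In $W_F/I_F^{(\ell)}$ only the pro-$\ell$ part of inertia survives (wild inertia, being pro-$p$ with $p\neq\ell$, is killed, and the prime-to-$\ell$ part of tame inertia is killed by definition), so Frobenius acts on $\tsigma$ by the cyclotomic character and the relation $\tilde\Fr\tsigma\tilde\Fr^{-1}=\tsigma^q$ holds. Hence $(\Fr_A,\sigma_A)$ is an $A$-point of $X_{q,n}$, giving a map $\phi\colon\widetilde R_{q,n}\to A$, where $\widetilde R_{q,n}$ is the coordinate ring of all of $X_{q,n}$. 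Since $X_{q,n}$ is of finite type over $W(k)$ it has finitely many connected components, so $\widetilde R_{q,n}\cong R_{q,n}\times R'$; writing $e\in\widetilde R_{q,n}$ for the idempotent $(1,0)$, it remains to prove $\phi(e)=1$.

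The strategy is to show $\phi(e)\equiv 1\pmod{\ell^i}$ for every $i$, whence $\ell$-adic separatedness of $A$ yields $\phi(e)=1$. Fix $i$, set $A_i:=A/\ell^i A$, and let $\phi_i\colon\widetilde R_{q,n}\to A_i$ be the reduction of $\phi$. By $\ell$-adic continuity applied to the open set $\Id+\ell^i M_n(A)\subseteq\GL_n(A)$, there exists $N$ with $\rho(\tsigma)^{\ell^N}\in\Id+\ell^i M_n(A)$, so $\sigma_{A_i}^{\ell^N}=\Id$ in $\GL_n(A_i)$. For any prime $\mathfrak{p}$ of $A_i$ the residue field $\kappa(\mathfrak{p})$ has characteristic $\ell$ (as $\ell^i=0$ in $A_i$), and there $\sigma^{\ell^N}=\Id$ forces $\sigma$ to be unipotent; Lemma~\ref{lemma:ell torsion} then shows that the image of $\Spec\kappa(\mathfrak{p})\to X_{q,n}$ lies on $X_{q,n}^0$, so $1-\phi_i(e)\in\mathfrak{p}$. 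Therefore $1-\phi_i(e)$ lies in the nilradical of $A_i$; being an idempotent, it must vanish, giving $\phi_i(e)=1$.

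I do not expect a serious obstacle; the only items requiring care are that the idempotent $e$ is available (which uses finite-typeness of $X_{q,n}$ over $W(k)$ to bound the number of components) and that $\ell$-adic continuity interacts well with the pro-$\ell$ structure of $I_F/I_F^{(\ell)}$ (so that an open neighborhood of $\Id$ in $\GL_n(A)$ pulls back to a subgroup containing $\tsigma^{\ell^N}$ for some $N$). The remainder combines Lemma~\ref{lemma:ell torsion} with two elementary observations: in characteristic $\ell$, any solution of $x^{\ell^N}=1$ is unipotent, and a nilpotent idempotent in any ring is zero.
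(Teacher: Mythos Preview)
Your proof is correct and follows the same line as the paper's: from the pair $(\rho(\tilde\Fr),\rho(\tsigma))$ obtain a map from the coordinate ring of $X_{q,n}$ to $A$, then use $\ell$-adic continuity together with Lemma~\ref{lemma:ell torsion} to see that this map factors through the component $X_{q,n}^0$. The paper compresses the second step into the sentence ``the eigenvalues of $\rho(\tsigma)$ are $\ell$-power roots of unity, thus the map factors through $R_{q,n}$''; your idempotent argument modulo $\ell^i$, combined with $\ell$-adic separatedness, is exactly the honest way to make that sentence precise for a general $A$.
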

\begin{proof}
Given $\rho$, we have a pair of matrices $(\rho(\tFr),\rho(\tsigma))$ in $\GL_n(A)$, satisfying
$$\rho(\tFr)\rho(\tsigma)\rho(\tFr)^{-1} = \rho(\tsigma)^q,$$ 
and hence a map $S_{q,n} \rightarrow A$.
Moreover, since the restriction of $\rho$ to $I_F$ factors through $I_F/I_F^{(\ell)}$ and is $\ell$-adically
continuous, the eigenvalues of $\rho(\tsigma)$ are $\ell$-power roots of unity.  Thus the map
$S_{q,n} \rightarrow A$ factors through $R_{q,n}$ and the result follows.
\end{proof}

If we regard the $\overline{\CK}$-points of $X_{q,n}^0$ as framed representations of $W_F/I_F^{(\ell)}$,
then one can show:

\begin{proposition} \label{prop:orbits}
Let $x$ be a $\overline{\CK}$-point of $X_{q,n}^0$.  Then there is a point $y$ in the closure of the $\GL_n$-orbit of $x$
such that the representation $\rho_y$ is semisimple.
\end{proposition}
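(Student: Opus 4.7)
The plan is to adapt the classical argument that the semisimplification of a finite-dimensional representation lies in the Zariski closure of its $\GL_n$-orbit under conjugation. Let $x$ correspond to the pair $(\Fr_x, \sigma_x) \in \GL_n(\overline{\CK})^2$ with $\Fr_x \sigma_x \Fr_x^{-1} = \sigma_x^q$, and regard $\rho_x$ as a representation of $W_F/I_F^{(\ell)}$ on $V = \overline{\CK}^n$. First I would choose a composition series $0 = V_0 \subset V_1 \subset \cdots \subset V_r = V$ for $\rho_x$, set $n_i = \dim V_i/V_{i-1}$, and pick a basis of $V$ compatible with this filtration. In this basis, $\Fr_x$ and $\sigma_x$ are simultaneously block upper triangular, and the block-diagonal pair $(\Fr_x^{\sss}, \sigma_x^{\sss})$ obtained by zeroing out the strictly upper-triangular blocks realizes the semisimplification $\rho_x^{\sss}$.

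The next step is to exhibit an explicit one-parameter family of conjugates of $x$ whose limit is this block-diagonal pair. Consider
$$g(t) = \operatorname{diag}(t^{r-1} \Id_{n_1},\, t^{r-2} \Id_{n_2},\, \ldots,\, t^0 \Id_{n_r}) \in \GL_n(\overline{\CK}(t)),$$
and set $x(t) = (g(t) \Fr_x g(t)^{-1},\, g(t) \sigma_x g(t)^{-1})$. Conjugation by $g(t)$ multiplies the $(i,j)$ block of a block upper triangular matrix by $t^{j-i}$, so each off-diagonal block ($i<j$) is multiplied by a strictly positive power of $t$ while every diagonal block is preserved. Consequently every matrix entry of $x(t)$ is a polynomial in $t$, the assignment $t \mapsto x(t)$ extends to a morphism $\varphi: \AA^1_{\overline{\CK}} \to \GL_n \times \GL_n$, and the constant terms assemble to $\varphi(0) = (\Fr_x^{\sss}, \sigma_x^{\sss})$.

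Finally I would verify that $\varphi$ lands in $X_{q,n}^0$ and that $\varphi(0)$ lies in the Zariski closure of the orbit. The relation $\Fr \sigma \Fr^{-1} = \sigma^q$ cuts out a closed subscheme of $\GL_n \times \GL_n$; it holds on $\varphi(\GG_m)$ because that set consists of $\GL_n$-conjugates of $x$, and by closure it therefore holds on $\varphi(\AA^1)$, so $\varphi$ factors through $X_{q,n}$. Since $\AA^1_{\overline{\CK}}$ is connected and $\varphi(1) = x \in X_{q,n}^0$, the image of $\varphi$ is contained in the single connected component $X_{q,n}^0$. Setting $y = \varphi(0)$, the representation $\rho_y$ is isomorphic to $\rho_x^{\sss}$ and hence semisimple, and $y$ lies in the Zariski closure of $\GL_n \cdot x$ because it is the limit of the one-parameter subfamily $\varphi(\GG_m) \subseteq \GL_n \cdot x$.

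There does not appear to be a serious obstacle: the construction of $g(t)$ and the extension to $t=0$ are explicit, and membership in the correct connected component is forced by the connectedness of $\AA^1$. The only conceptual point is the choice of a basis adapted to a composition series together with the weights $r-i$ of the one-parameter subgroup, which together guarantee that off-diagonal blocks are killed in the limit while diagonal blocks are preserved.
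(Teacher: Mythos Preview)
Your argument is correct and follows essentially the same approach as the paper: pass to a basis adapted to a composition series so that $\Fr_x$ and $\sigma_x$ are block upper triangular, then conjugate by a one-parameter diagonal subgroup weighted by block index and take the limit $t\to 0$ to obtain the block-diagonal semisimplification. Your write-up is in fact more careful than the paper's, explicitly checking that the limit lands in the correct connected component via the connectedness of $\AA^1$.
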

\begin{proof}
Replacing $x$ with a point in the same $\GL_n$-orbit, we may assume that the framing on $\rho_x$ is such that $\rho_x$
is block upper triangular, with block sizes $n_1, \dots n_r$, and that for $1 \leq i \leq r$, the restriction $\rho_i$
of $\rho_x$ to the $i$th diagonal block is irreducible.  Let $M$ be the block diagonal matrix whose $i$th block is
given by $t^i$ times the $n_i$ by $n_i$ identity matrix, for some parameter $t$.  Then the limit, as $t$ approaches zero,
of $M \rho_x M^{-1}$ exists and is semisimple.
\end{proof}

We will later need the following observation about the representation $\rho_{F,n}$.

\begin{proposition} \label{prop:tame constant inertia}
As $x$ varies over the $\overline{\CK}$-points of $X_{q,n}^0$, the restriction of $\rho_x^{\sss}$ to $I_F$ is constant
on connected components of $X_{q,n}^0 \times_{W(k)} \overline{\CK}$.
\end{proposition}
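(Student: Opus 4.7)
The plan is to reduce the statement to the claim that the characteristic polynomial of $\sigma_x$ is locally constant on $X_{q,n}^0 \times_{W(k)} \overline{\CK}$. First I would observe that since $\rho_{F,n}$ factors through $W_F/I_F^{(\ell)}$, the restriction $\rho_x|_{I_F}$ factors through the pro-cyclic group $I_F/I_F^{(\ell)}$, which is topologically generated by $\tsigma$. Consequently $\rho_x^{\sss}|_{I_F}$ is, up to isomorphism, a semisimple representation of this pro-cyclic group, entirely determined by the conjugacy class of the semisimple part of $\sigma_x = \rho_x(\tsigma)$; equivalently, it is determined by the characteristic polynomial of $\sigma_x$.

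Next I would invoke Corollary~\ref{cor:continuity}, which furnishes an integer $a$ such that $\sigma^{\ell^a}$ is unipotent in $\GL_n(R_{q,n})$. Specializing the identity $(\sigma^{\ell^a} - \Id)^n = 0$ at any $\overline{\CK}$-point $x$ of $X_{q,n}^0$ shows that $\sigma_x^{\ell^a}$ is unipotent, so every eigenvalue of $\sigma_x$ is an $\ell^a$-th root of unity. It follows that the characteristic polynomial of $\sigma_x$ lies in the finite set $S$ of monic degree-$n$ polynomials in $\overline{\CK}[X]$ all of whose roots are $\ell^a$-th roots of unity.

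With this in hand, consider the morphism
$$\phi\colon X_{q,n}^0 \times_{W(k)} \overline{\CK} \longrightarrow \AA^n_{\overline{\CK}}$$
assigning to a point the coefficients of the characteristic polynomial of $\sigma$. The set-theoretic image of $\phi$ is contained in the finite set $S$; since $S$ consists of closed points of $\AA^n_{\overline{\CK}}$, each of its points is both closed and (as the complement of the other finitely many closed points) open in the subspace topology, so $S$ is discrete. As $\phi$ is continuous, the image of any connected component of $X_{q,n}^0 \times_{W(k)} \overline{\CK}$ is a connected subset of $S$, hence a single point. By the first paragraph, this establishes that $\rho_x^{\sss}|_{I_F}$ is constant on connected components.

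There is no serious obstacle in this argument; the essential input is Corollary~\ref{cor:continuity}, which is where $\ell$-adic continuity is translated into an algebraic integrality statement about $\sigma$. Once the eigenvalues of $\sigma_x$ are known to be $\ell^a$-torsion, everything else is a formal consequence of the fact that a morphism from a scheme to a finite discrete subset of $\AA^n$ must be constant on connected components.
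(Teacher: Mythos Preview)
Your proof is correct and follows essentially the same approach as the paper's: reduce to the characteristic polynomial of $\sigma_x$, observe that its eigenvalues are $\ell$-power roots of unity of bounded order, and conclude there are only finitely many possible characteristic polynomials. The paper's proof is terser (two sentences), but your explicit invocation of Corollary~\ref{cor:continuity} and the continuity argument via the map to $\AA^n_{\overline{\CK}}$ spell out exactly the details the paper leaves implicit.
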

\begin{proof}
The restriction of $\rho_x^{\sss}$ to $I_F$ is determined by the characteristic polyomial of $\sigma_x$;
since the eigenvalues of $\sigma_x$ have bounded $\ell$-power order there are only finitely possible characteristic polynomials of $\sigma_x$.
\end{proof}

\section{The inertial subalgebra of $S_{q,n}$} \label{sec:inertial}

Our next goal is to study the finite rank $W(k)$-subalgebra of $S_{q,n}$ generated by the coefficients of the characteristic polynomial
of $\sigma$.  Consider the map: 
$$W(k)[r_1,\dots,r_n,r_n^{-1}] \rightarrow S_{q,n}$$ 
that takes $r_i$ to the coefficient of $X^{n-i}$ in this characteristic polynomial.  

By the theory of symmetric functions, for $1 \leq i \leq n$ there are unique polynomials $P_{i,q}$ in the variables $r_1,\dots,r_n$
with the following property: for all $t_1,\dots,t_n \in \overline{\CK}$, define $r_1,\dots,r_n \in \overline{\CK}$ by the identity:
$$(X - t_1)\cdots(X - t_n) = X^n + r_1 X^{n-1} + r_2 X^{n-2} + \dots + r_n.$$
Then the $P_{i,q}$ are the unique polynomials satisfying:
$$(X - t_1^q)\cdots(X - t_n^q) = X^n + P_{1,q}(r_1,\dots,r_n) X^{n-1} + \dots + P_{n,q}(r_1,\dots,r_n).$$

Since $\sigma$ is conjugate to its $q$th power, for $1 \leq i \leq n$ the element
$P_{i,q}(r_1,\dots,r_n) - r_i$ lies in the kernel of the map $W(k)[r_1,\dots,r_n,r_n^{-1}] \rightarrow S_{q,n}$.  Let $I_{q,n}$
denote the ideal of $W(k)[r_1,\dots,r_n,r_n^{-1}]$ generated by the $P_{i,q}(r_1,\dots,r_n) - r_i$, and let $B_{q,n}$ denote
the quotient $W(k)[r_1,\dots,r_n,r_n^{-1}]/I_{q,n}$.  We will show that in fact the map $B_{q,n} \rightarrow S_{q,n}$ is injective, and
that moreover its image in $S_{q,n}$ is saturated.

We will now realize $B_{q,n}$ as a quotient of $S_{q,n}$ in a natural way.  We are grateful to Jack Shotton for making us aware
of the following construction, which is adapted from Proposition 7.10 in~\cite{shotton}.  (Shotton uses a slightly different form for the
matrix $\sigma$, that is less convenient for our purposes, but the arguments are otherwise exactly analogous.)

%We will now realize ${\tilde B}_{q,n}$ as a quotient of $S_{q,n}$ in a natural way.  We are grateful to Jack Shotton for making us aware
%of the following construction, which is adapted from Proposition 7.10 in~\cite{shotton}.

Let $Y \subseteq \Spec S_{q,n}$ denote the locus on which $\sigma$ has the form:
$$
\begin{pmatrix}
0 & 0 & 0 & \dots & 0 & -r_n\\
1 & 0 & 0 & \dots & 0 & -r_{n-1}\\
0 & 1 & 0 & \dots & 0 & 0\\
& \vdots & & & \vdots \\
0 & 0 & 0 & \dots & 1 & -r_1\\
\end{pmatrix}.
$$
(that is, on which $\sigma$ is the ``companion matrix'' of the polynomial $X^n + r_1 X^{n-1} + \dots + r_n$.)
We may embed $Y$ as an open subscheme of the scheme $Y'$ parameterizing pairs of matrices $(\Fr,\sigma)$ such that $\sigma$ is invertible of the above form, the characteristic
polynomial of $\sigma$ is equal to that of $\sigma^q$, and $\Fr \sigma = \sigma^q \Fr$.  Then $Y$ is simply the open
subscheme of $Y'$ on which $\Fr$ is invertible.  The scheme $Y'$ then maps to $\Spec B_{q,n}$ via the map that takes $(\Fr,\sigma)$ to the tuple $(r_1,\dots,r_n)$.

We have a map: $Y' \rightarrow \Spec B_{q,n} \times_{W(k)} \AA^n_{W(k)}$ that takes $(\Fr,\sigma)$ to the point $(r_1,\dots,r_n, \Fr(e_1))$,
where $e_1,\dots,e_n$ is the standard basis for $W(k)^n$.  In fact, one then has:

\begin{proposition} The map $Y' \rightarrow \Spec B_{q,n} \times_{W(k)} \AA^n_{W(k)}$ is an isomorphism.
\end{proposition}
\begin{proof}
We describe an inverse map.  Given $(r_1,\dots,r_n,v)$ in $\Spec B_{q,n} \times_{W(k)} \AA^n_{W(k)}$ we
associate the pair $(\Fr,\sigma)$, where $\sigma$ has the above form with $-r_n,\dots,-r_1$ in the right column, and $\Fr$ is defined by:
$\Fr e_i = \sigma^{(i-1)q}v$ for $1 \leq i \leq n$.  
One verifies easily that for $1 \leq i \leq n-1$, we have $\Fr\sigma(e_i) = \sigma^q\Fr(e_i)$.  On the other hand, we have:
$$\sigma^q \Fr e_n - \Fr \sigma e_n = (\sigma^{q})^n + r_1 (\sigma^q)^{n-1} + \dots + r_n)v  = P_{\sigma}(\sigma^q)v,$$
where $P_{\sigma}$ is the characteristic polynomial of $\sigma$.  The relations on ${\tilde B}_{q,n}$ guarantee that $P_{\sigma} = P_{\sigma^q}$,
so $P_{\sigma}(\sigma^q)v = 0$ by Cayley-Hamilton.

We thus have a well-defined map that is clearly a right inverse to the map constructed above.  To see that it is also a left inverse, note that if
$\Fr \sigma = \sigma^q \Fr$, and $\Fr(e_1) = v$, then we must have
$$\Fr e_i = \Fr\sigma(e_{i-1})= \sigma^q \Fr e_{i-1}$$
so by induction $\Fr$ is determined by $\Fr(e_1)$.
\end{proof}

\begin{lemma}
Let $B$ be a finite rank $W(k)$-algebra, and $V$ an open subset of $\Spec B \times_{W(k)} \AA^n_{W(k)}$ such that the projection $V \rightarrow \Spec B$
is surjective.  Then the map from $B$ to $\OO_V$ induced by the projection of $V$ onto $\Spec B$ is injective.  If moreover, $B$ is flat over $W(k)$,
then the image of $B$ in $\OO_V$ is saturated.
\end{lemma}
\begin{proof}
For each closed point $x$ of $\Spec B$, there exists an element $\overline{a}_x$ of $k^n$ such that $(x,\overline{a}_x)$ lies in $V$.  Lift
$\overline{a}_x$ to a $W(k)$-point $a_x$ of $\AA^n_{W(k)}$, and let $V_x = V \cap (\Spec B \times_{W(k)} a_x)$.  Then the projection
of $V$ to $\Spec V$ identifies $V_x$ with an open subset of $\Spec B$, and as $x$ varies, the $V_x$ cover $\Spec B$.  If $b$ is an element of $B$ that
maps to zero in $\OO_V$, then it vanishes in particular on each $V_x$ and hence on $\Spec B$, so injectivity is clear.

Now consider an element $b$ of $B/\ell B$, and suppose $B$ maps to zero in $\OO_V/\ell \OO_V$.  Then $b$ maps to zero in $\OO_{V_x}/\ell \OO_{V_x}$
for all $x$, but since the $V_x$ are an open cover of $\Spec B$ this means $b$ is zero in $B/\ell B$.
\end{proof}

We can now show:
\begin{proposition}  \label{prop:inertial saturation}
The map $B_{q,n} \rightarrow S_{q,n}$ is injective with saturated image.
\end{proposition}
\begin{proof}
We first show that the projection map from $Y$ to $\Spec B$ is surjective.  Indeed, for any algebraically closed field $L$ that is a $W(k)$-algebra,
and any $L$-point $(r_1,\dots,r_n)$ of $\Spec B$, the corresponding $\sigma$ is a regular element of $L$ whose characteristic polynomial is equal to that of $\sigma^q$.
In particular the eigenvalues of $\sigma$ are roots of unity of order prime to $q$.  It is then clear, by considering the Jordan normal form of $\sigma$, that $\sigma^q$
is also regular.  Over $L$ any two regular matrices with the same characteristic polynomial are conjugate, so there exists an element $\Fr$ of $\GL_n(L)$ that conjugates
$\sigma$ to $\sigma^q$.  Then $(\Fr,\sigma)$ is an $L$-point of $T$ mapping to $(r_1,\dots,r_n)$.

The lemma now shows that the map from $B_{q,n}$ to ${\mathcal O}_Y$ is injective; since this map factors through $S_{q,n}$ we see that $B_{q,n}$ embeds in $S_{q,n}$.
Thus $B_{q,n}$ is flat over $W(k)$, and the lemma then shows that its image in ${\mathcal O}_Y$ is saturated.  Once again using that the map from $B_{q,n}$ to ${\mathcal O}_Y$
factors through $S_{q,n}$ we see that $B_{q,n}$ is also saturated in $S_{q,n}$.
\end{proof}

The map $B_{q,n} \rightarrow S_{q,n}$ induces a map $B_{q,n,1} \rightarrow R_{q,n}$, where $B_{q,n,1}$ is the direct factor of $B_{q,n}$ whose $\overline{\CK}$-points
correspond to conjugacy classes whose reduction modulo $\ell$ is the identity.  Proposition~\ref{prop:tame constant inertia}, together with Proposition~\ref{prop:inertial saturation},
shows that $B_{q,n,1}$ is precisely the subalgebra of $R_{q,n}$ consisting of elements whose value at a $\overline{\CK}$-point $x$ of $\Spec R_{q,n}$ depends only on
the semisimplification of the restriction of $\rho_x$ to $I_F$.

\section{The symmetrizing form on $B_{q,n}$} \label{sec:E-B}

We now relate $B_{q,n}$ with the endomorphism ring $\overline{E}_{q,n}$ of the Gelfand-Graev representation.  We first work over $\overline{\CK}$;
since both $B_{q,n}$ and $\overline{E}_{q,n}$ are reduced, constructing an isomorphism of $B_{q,n} \otimes \overline{\CK}$ with $\overline{E}_{q,n} \otimes \overline{\CK}$
amounts to constructing a bijection on their $\overline{\CK}$-points.

Recall that the $\overline{\CK}$-points of $\Spec \overline{E}_{q,n}$ are in bijection with the isomorphism classes of irreducible generic representations of $\overline{G}$
and therefore (via Deligne-Lusztig restriction) with the equivalence classes of pairs $(w,\varphi)$ where $w$ is an element of the Weyl group of $\overline{G}$
and $\varphi: \overline{T}_w \rightarrow \overline{\CK}^{\times}$ is a character.  On the other hand, a $\overline{\CK}$-point of $\Spec B_{q,n}$ is represented
by an invertible diagonal matrix, with entries in $\overline{\CK}$, that is conjugate to its $q$-th power; that is, it is an invertible diagonal matrix $t$
such that there exists a permutation matrix $w$ with $t^w = t^q$.

In order to construct a natural bijection between these two sets we must fix some choices.  First, we
identify $\GL_n(\overline{\CK})$ with the Langlands dual group ${\hat G}$ of $G$, with (diagonal) maximal torus ${\hat T}$.
Second, we choose a topological generator $\tsigma$ of the tame inertia group $I_F/P_F$ of $F$.  Local class field theory gives an isomorphism:
$$I_F/P_F \cong \varprojlim \FF_{q^n}^{\times} \cong 
\varprojlim \Hom(\frac{1}{q^n-1}\ZZ/\ZZ, \overline{\FF}_q^{\times}),$$
where the first limit is over the norm maps, and the transition maps in the second limit, for $m$ dividing $n$, are given by ``multiplication by $\frac{q^n -1}{q^m - 1}$''.

On the other hand we have a chain of natural isomorphisms:
$$\Hom((\QQ/\ZZ)^{(p)}, \overline{\FF}_q^{\times}) = \Hom(\varinjlim (\frac{1}{q^n-1}\ZZ/\ZZ), \overline{\FF}_q^{\times}) \cong I_F/P_F,$$
so our choice of $\tsigma$ gives us a natural map $(\QQ/\ZZ)^{(p)} \rightarrow \overline{\FF}_q^{\times}$ that is easily seen to be an isomorphism.

Now fix a $w$ in the Weyl group $W(\overline{G})$; we identify $W(\overline{G})$ with the group of permutation matrices in $\GL_n(\overline{\CK})$. 
Let $X$ be the character group of the torus $\CTbar_w$ of $\CGbar$; then $X$ is dual to the character group $X'$ of the group of diagonal matrices in $\GL_n(\overline{\CK})$.
We have an isomorphism $\CTbar_w(\overline{\FF}_q) \cong \Hom(X/(\Fr_q - 1)X, \overline{\FF}_q^{\times})$, where $\Fr_q$ is the $q$-power Frobenius.
If we denote by $\mu^{(p)}$ the prime-to-$p$ roots of unity in $\overline{\CK}^{\times}$, then we have an isomorphism:
$$\Hom(\overline{T}_w,\mu^{(p)}) \cong X/(\Fr_q - 1)X \otimes \Hom(\overline{\FF}_q^{\times}, \mu^{(p)}).$$
Noting that $\Fr_q$ acts on $X$ by $qw$, and
applying the duality isomorphism:
$$X/(qw - 1)X \cong \Hom(X'/(qw-1)X', (\QQ/\ZZ)^{(p)})$$
as well as our isomorphism of $(\QQ/\ZZ)^{(p)}$ with $\overline{\FF}_q^{\times}$ arising from our choice of $s$,
we see that $\Hom(\overline{T}_w,\mu^{(p)})$ is naturally isomorphic to $\Hom(X'/(qw-1)X', \mu^{(p)})$.  An element of the latter is precisely a diagonal
matrix $t$, with entries in $\overline{\CK}$, such that $(t^w)^q = t$.  We let $T_q^{w^{-1}}$ denote the set of such matrices.

This construction associates to every $w$, and every character $\varphi: \overline{T}_w \rightarrow \overline{\CK}^{\times}$, an element of $T_q^{w^{-1}}$.
One easily verifies that it sends equivalent pairs $(\CTbar_w,\varphi)$ and $(\CTbar_{w'},\varphi')$ to conjugate diagonal matrices, and further induces a bijection
between $\overline{\CK}$-points of $\Spec \overline{E}_{q,n}$ and those of $\Spec B_{q,n}$.  We thus obtain an isomorphism
of $\overline{E}_{q,n} \otimes \overline{\CK}$ with $B_{q,n} \otimes \overline{\CK}$.  This isomorphism is $\Gal(\overline{\CK}/\CK)$-equivariant and thus
descends to an isomorphism of $\overline{E}_{q,n}[\frac{1}{\ell}]$ with $B_{q,n}[\frac{1}{\ell}]$.

\begin{remark} \label{rem:Langlands} \rm
The choices made in defining the bijection above means that this bijection is compatible with local Langlands in the following sense: let $\pi$ be an irreducible
depth zero generic representation of $G$ over $\overline{\CK}$, and let $\rho$ be its Langlands parameter.  If $K_1$ denotes the kernel of the map 
${\mathcal G}(\OO_G) \rightarrow \overline{\mathcal G}(\FF_q)$, then $\pi^{K_1}$ is an irreducible generic $\overline{\CK}$-representation of $\overline{G}$,
and hence gives rise to a $\overline{\CK}$-point of $\Spec \overline{E}_{q,n}$.  On the other hand, the conjugacy class of the semisimplification of $\rho(\sigma)$
gives a $\overline{\CK}$-point of $\Spec B_{q,n}$.  The bijection constructed above identifies these two points for every choice of $\pi$ and $\rho$.
\end{remark}

Since $B_{q,n}$ and $\overline{E}_{q,n}$ are $\ell$-torsion free, we may regard them as $W(k)$-lattices in $B_{q,n}[\frac{1}{\ell}] \cong \overline{E}_{q,n}[\frac{1}{\ell}]$.
A priori it is not clear that either lattice is contained in the other.  We will show later that in fact these lattices coincide, but this is quite difficult-
it will emerge from the same inductive argument that proves both the weak and strong conjecture in section~\ref{sec:main}.  For the moment, it will suffice to prove something
much weaker.

Recall that one has a symmetrizing form $\theta: \overline{E}_{q,n} \rightarrow W(k)$; the inclusion $B_{q,n} \rightarrow \overline{E}_{q,n}[\frac{1}{\ell}]$ allows us
to regard $\theta$ as a map from $B_{q,n}$ to $\overline{\CK}$.  The goal of the remainder of this section is to prove:

\begin{theorem} \label{thm:B-symmetrizing}
The map $\theta: B_{q,n} \rightarrow \overline{\CK}$ takes values in $W(k)$.
\end{theorem}

As a corollary, we immediately deduce
\begin{corollary} \label{cor:E-B}
Suppose that the isomorphism $B_{q,n}[\frac{1}{\ell}] \cong \overline{E}_{q,n}[\frac{1}{\ell}]$ identifies $\overline{E}_{q,n}$ with a subring of $B_{q,n}$.  Then this isomorphism
identifies $\overline{E}_{q,n}$ with $B_{q,n}$.
\end{corollary}
\begin{proof}
(c.f. Lemma 3.8 of~\cite{gelfand-graev})
If $\overline{E}_{q,n}$ is contained in $B_{q,n}$, then $\theta(be)$ lies in $W(k)$ for all $b \in B_{q,n}$, $e \in \overline{E}_{q,n}$; thus $B_{q,n}$ is contained in the dual lattice
to $\overline{E}_{q,n}$ with respect to $\theta$.  But since $\theta$ is a symmetrizing form on $\overline{E}_{q,n}$, this dual lattice is $\overline{E}_{q,n}$.  Thus $B_{q,n}$ 
and $\overline{E}_{q,n}$ must coincide inside $\overline{E}_{q,n}[\frac{1}{\ell}]$.
\end{proof}

In order to prove Theorem~\ref{thm:B-symmetrizing} we compute the values of $\theta$ on a $W(k)$-spanning set for $B_{q,n}$.  By definitition we have a surjection:
$$W(k)[X']^{S_n} = W(k)[r_1,\dots,r_n,r_n^{-1}]^{S_n} \rightarrow B_{q,n}$$
with kernel $I_{q,n}$.
For each character $\lambda \in X'$, let $N_{\lambda}$ denote the subgroup of $S_n$ normalizing $\lambda$.  Then the elements
$$r_{\lambda} = \frac{1}{\# N_{\lambda}} \sum_{w \in S_n} \lambda^w$$
form a $W(k)$-basis of $W(k)[X']^{S_n}$, as $\lambda$ runs over the elements of $X'$, so their images in $B_{q,n}$ 
(which we also, slightly abusively, denote by $r_{\lambda}$) span $B_{q,n}$ over $W(k)$.

\begin{lemma} 
For $\lambda \in X'$, let $M_{\lambda}$ denote the number of $w \in S_n$ such that the restriction of $\lambda$ to the subgroup $T_q^w$ of $\Hom(X',\overline{\CK}^{\times})$
is trivial.  Then we have:
$$\theta(r_{\lambda}) = \frac{M_{\lambda}}{\# N_{\lambda}}.$$
\end{lemma}

\begin{proof}
Let $x$ be a $\overline{\CK}$-point of $\Spec B_{q,n}$, and let $e_x$ denote the element of $B_{q,n} \otimes \overline{\CK}$ that takes the value $1$ at $x$ and zero at all other
$\overline{\CK}$-points of $\Spec B_{q,n}$.  Our construction of the isomorphism $\overline{E}_{q,n} \otimes \overline{\CK} \cong B_{q,n} \otimes \overline{\CK}$, together with
Proposition~\ref{prop:E-symmetrizing}, shows that 
$$\theta(e_x) = \frac{1}{n!} \sum_{w \in S_n} \frac{N(w^{-1},x)}{\# \overline{T}_w(\FF_q)} = \frac{1}{n!} \sum_{w \in S_n} \frac{N'(w,x)}{\# T_q^{w^{-1}}},$$
where $N'(w,x)$ denotes the number of elements of $T^w_q$ in the equivalence class corresponding to $x$.  It follows that we have:
$$\theta(r_{\lambda}) = \frac{1}{n!} \sum_x r_{\lambda}(x) \sum_{w \in S_n} \frac{N'(w,x)}{\# T_q^{w^{-1}}}.$$
Since $r_{\lambda}(t)$ depends only on the equivalence class of $t \in T^w_q$, we can rewrite this as:
$$\theta(r_{\lambda}) = \frac{1}{n!} \sum_{w \in S_n} \frac{1}{\# T_q^{w^{-1}}} \sum_{t \in T_q^{w^{-1}}} \frac{1}{\# N_{\lambda}} \sum_{v \in S_n} \lambda^v(t).$$
Changing the order of the summation, we obtain:
$$\theta(r_{\lambda}) = \frac{1}{n! \# N_{\lambda}} \sum_{v \in S_n} \sum_{w \in S_n} \frac{1}{\# T_q^{w^{-1}}} \sum_{t \in T_q^{w^{-1}}} \lambda^v(t),$$
and the innermost sum is equal to $0$ if $\lambda^v$ is nontrivial on $T_q^{w^{-1}}$ and equal to $\# T_q^{w^{-1}}$ otherwise.  Thus the sum over $w$ is equal to $M_{\lambda^v}$
which is equal to $M_{\lambda}$.  We thus have $\theta(r_{\lambda}) = \frac{M_{\lambda}}{\# N_{\lambda}}$ as claimed.
\end{proof}

In light of this result, the proof of Theorem~\ref{thm:B-symmetrizing} is reduced to the following result:
\begin{lemma} \label{lemma:count}
For any $\lambda \in X'$, the order of $N_{\lambda}$ divides $M_{\lambda}$.
\end{lemma}

It is clear that the set of $w$ such that $\lambda$ is trivial on $T_w^q$ is stable under conjugation by elements of $N_{\lambda}$, but of course this
action is not faithful, so the divisibility is not immediate.

We begin by observing that $N_{\lambda}$ is the Weyl group of the Levi subgroup of $\GL_n$ centralizing $\lambda$.  This Levi corresponds to a
partition of the $\{1,2,\dots,n\}$ into subsets, and $N_{\lambda}$ is then the subgroup of $S_n$ that preserves this partition.
In particular if $w$ lies in $N_{\lambda}$, then any cycle occuring in the
cycle decomposition of $w$ also lies in $N_{\lambda}$.

Now let $N_{\lambda,w}$ denote the centralizer
of $w$ in $N_{\lambda}$.  Let $O(w)$ be the partition of the set $\{1,\dots,n\}$ into orbits under the action of $w$; then conjugation by $N_{\lambda,w}$ permutes
the orbits of $w$, yielding a map $N_{\lambda,w} \rightarrow \Aut(O(w))$, where $\Aut(O(w))$ is the group of permutations of $O(w)$.

\begin{definition} We will say that $w$ is $N_{\lambda}$-minimal if the map $N_{\lambda,w} \rightarrow \Aut(O(w))$ is injective.
\end{definition}

Note that the property of being $N_{\lambda}$-minimal is stable under $N_{\lambda}$-conjugacy.
Given an arbitrary $N_{\lambda}$-conjugacy class in $S_n$, we will associate an $N_{\lambda}$-minimal conjugacy class in a natural way.
On the level of specific permutations $w$ this construction will depend not just on $w$ but on a particular choice of cycle representation
for $w$.  Here by a ``cycle representation'' of $w$ we mean an unordered collection of expressions of the form $(x_1 \dots x_r)$, with $x_1,\dots,x_r$ distinct
elements of $\{1,\dots,n\}$, that correspond to a disjoint set of cycles whose product is $w$.  To give a cycle representation of $w$ is equivalent to
specifying, for each orbit $x$ of $w$ on $\{1,\dots,n\}$, a distinguished element $x_1$ of the orbit $x$.

Now fix $w \in S_n$, along with a cycle representation of $w$, and let $K$ be the kernel of the map from $N_{\lambda,w}$ to $\Aut(O(w))$.
Then $K$ acts on each orbit $O(w)$; such an orbit $x$ comes from a cycle $(x_1 \dots x_r)$ in our chosen cycle representation of $w$.
Since $K$ centralizes
$w$, it must ``cyclically permute'' the elements of this orbit; that is, the action of $K$ factors through a map $K \rightarrow \ZZ/r\ZZ$, where
$s \in \ZZ/r\ZZ$ acts by sending each $x_i$ to $x_{i+s}$, and the indices are considered modulo $r$.  Let $m$ be the order of the image 
of the map $K \rightarrow \ZZ/r\ZZ$, and set $s = \frac{r}{m}$.
Let $x^{\min}$ be the permutation given by the product of the $m$ disjoint cycles $(x_1 \dots x_s) (x_{s+1} \dots x_{2s}) \dots (x_{r-s+1} \dots x_r)$.
We then define $w^{\min}$ to be the product, over all cycles $x \in O(w)$, of the permutations $x^{\min}$.  It is clear from the construction that if $w$ is $N_{\lambda}$-minimal
then $w^{\min} = w$.

This construction depends on our choice of cycle representation of $w$; in particular
if we represented the cycle $x = (x_1 \dots x_r)$ as $(x_{t+1} \dots x_{t+r})$ instead then we would obtain
the product of cycles 
$$(x_{t+1} \dots x_{t+s}) (x_{t + s +1} \dots x_{t+2s}) \dots (x_{r + t -s + 1} \dots x_{r+t})$$ 
instead of the product:
$$(x_{1} \dots x_{s}) (x_{s +1} \dots x_{2s}) \dots (x_{r -s + 1} \dots x_{r}).$$ Note that the former is $N_{\lambda}$-conjugate to
the latter, via the permutation that, for each $0 \leq a < \frac{r}{s}$ and each $1 \leq b \leq s$, takes $x_{as + b}$ to $x_{t + as + c}$,
where $c$ is the unique integer between $1$ and $s$ such that $as + b$ is congruent to $as + t + c$ modulo $s$.  However, the two permutations are of course
not equal.  Thus changing the cycle representation of $w$ conjugates $w^{\min}$ by an element of $N_{\lambda}$.  In particular the $N_{\lambda}$-conjugacy
class $[w^{\min}]$ depends only on $w$ and not its cycle representation.

On the other hand, if we fix a $v \in N_{\lambda}$,
and a cycle representation of $w$, then conjugating this cycle representation by $v$ gives a cycle representation of $vwv^{-1}$.  Then if we compute $w^{\min}$
and $(vwv^{-1})^{\min}$ using these cycle representations it is easy to see that $(vwv^{-1})^{\min} = v w^{\min} v^{-1}$.  In particular $[w^{\min}]$ depends only
on the $N_{\lambda}$-conjugacy class of $w$.

\begin{lemma} For any $w \in S_n$, $w^{\min}$ is $N_{\lambda}$-minimal.
\end{lemma}
\begin{proof}
Suppose for a contradiction that $w^{\min}$ is not $N_{\lambda}$-minimal, and
let $K$ be the kernel of the map $N_{\lambda,w^{\min}} \rightarrow \Aut(O(w^{\min}))$.  Choose an element $k$ of $K$ other than the identity.  By definition $k$ preserves
every orbit in $O(w)$ and acts nontrivially on at least one such orbit $x = (x_1 \dots x_r)$; we have an $s$ such that $k x_i = x_{i+s}$ for all $i$.  Let $k'$ denote the
permutation that sends $x_i$ to $x_{i+s}$ for all $i$ and fixes all other elements.  Then $k'$ lies in $N_{\lambda}$, since $k$ does and $k'$ is a product of cycles of $k$.
Moreover it is clear that $k'$ commutes with $w^{\min}$.

Our construction of $w^{\min}$ from $w$ implies that the $w^{\min}$-cycle $x$ is contained in a $w$-cycle $x'$ of the form $(x_1 \dots x_{r'})$ for some multiple $r'$ of $r$,
and that the cycles $(x_{r+1} \dots x_{2r})$, etc. are cycles of $w^{\min}$.  Let $k''$ be the permutation that takes $x_i$ to $x_{i+s}$ for all $1 \leq i \leq r'$; then it is clear
that $k''$ centralizes $w$.  We will show that in fact $k''$ lies in $N_{\lambda}$; this gives a contradiction as then we have an element of $N_{\lambda,w}$ that
acts by a shift of length $s$ on the cycle $x'$, meaning that in passing from $w$ to $w^{\min}$ the cycle $x'$ should decompose into cycles of length dividing $s$, and not cycles of length $r$
as we have supposed.

To show that $k''$ lies in $N_{\lambda}$ it suffices to show that for all $i$, $x_{i+s}$ and $x_i$ lie in the same $N_{\lambda}$-orbit.  For $1 \leq i \leq r-s$ this is clear since $k'$ lies in
$N_{\lambda}$.  On the other hand, since $x'$ decomposes into cycles of length $r$ in the cycle decomposition of $w^{\min}$, there is an element of $N_{\lambda}$ that carries $x_i$ to
$x_{i+r}$ for all $i$.  The claim follows.
\end{proof}

The association $w \mapsto [w^{\min}]$ defines an equivalence relation $\sim$ on $S_n$, such that $w \sim v$ if, and only if, $[w^{\min}] = [v^{\min}]$. It is clear
that each equivalence class for $\sim$ is a union of $N_{\lambda}$-orbits.  We will show that in fact each equivalence
class has cardinality equal to $\# N_{\lambda}$.  We begin by fixing an $N_{\lambda}$-minimal $w$.  Then we have an injection $N_{\lambda,w} \rightarrow \Aut(O(w))$.  We will say two
orbits $x,x'$ in $O(w)$ are $N_{\lambda,w}$-equivalent if there is an element of $N_{\lambda,w}$ that takes $x$ to $x'$.  We then have:

\begin{lemma}
Suppose $w$ is $N_{\lambda}$-minimal, and let
$v$ be a permutation of $O(w)$ such that for all $x \in O(w)$, $vx$ is $N_{\lambda,w}$-equivalent to $x$.  Then there is a unique element $\tilde v$ of $N_{\lambda,w}$ whose image in
$\Aut(O(w))$ is $v$.  In particular, $N_{\lambda,w}$ is a product of symmetric groups.
\end{lemma}
\begin{proof}
Uniqueness is clear from the definition of $N_{\lambda}$-minimality.  For existence, fix an orbit $x \in O(w)$.  Then there is an element $v'_x$ of $N_{\lambda,w}$
that takes $x$ to $vx$.  We can then define $\tilde v$ to be the bijection on $\{1,2,\dots,n\}$ that agrees with $v'_x$ on $x$ for all orbits $x$.  Note
that for all $1 \leq i \leq n$, we have ${\tilde v}(i) = v'_x(i)$ for $x$ the $w$-orbit containing $i$; since $v'_x$ is in $N_{\lambda}$
we have $\lambda_i = \lambda_{v'_x(i)} = \lambda_{{\tilde v}(i)}$, so $\tilde v$ lies in $N_{\lambda}$.
\end{proof}

We now fix a particular $N_{\lambda}$-minimal $w$, and a particular cycle representation of $w$.  Since $w$ is $N_{\lambda}$-minimal we may (and do) choose
this cycle representation so that it is preserved by the action of $N_{\lambda,w}$.  Then given any $v \in N_{\lambda,w}$, define ${\tilde w}(v)$
to be the permutation constructed as follows: for orbit of $v$ on $O(w)$, choose an $x$ representing that orbit.  The orbit $x$ then corresponds to a term
$(x_1 \dots x_r)$ in our chosen cycle representation of $w$.  
Let ${\tilde w}(v)_x$ be the permutation $(x_1 \dots x_r \, vx_1 \dots vx_r \dots v^{d-1}x_1 \dots v^{d-1}x_r)$, where $d$ is the order of the $v$-orbit of $x$.
Let ${\tilde w}(v)$ be the product, over a set of representatives $x$ for the orbits of $v$ on $O(w)$, of ${\tilde w}(v)_x$.  Note that as a permutation, ${\tilde w}(v)$
is independent of our choices of representatives $x$ but does depend on our choice of cycle representation of $w$.  On the other hand, our initial choice of cycle representation of $w$,
together with the choices of representatives $x$, gives rise to a cycle representation of ${\tilde w}(v)$.

\begin{lemma}
Let $u$ be an element of $N_{\lambda}$.  Then $u$ conjugates ${\tilde w}(v)$ to ${\tilde w}(v')$ if, and only if, $u$ normalizes $w$ and conjugates $v$
to $v'$.  Moreover, we have ${\tilde w}(v)^{\min} = w$.
\end{lemma}
\begin{proof}
First assume that $u$ normalizes $w$.  Then $u$ actually fixes our chosen cycle representation of $w$, since $w$ is $N_{\lambda}$-minimal.  It is then easy to see from the
construction that ${\tilde w}(uvu^{-1}) = u{\tilde w}(v)u^{-1}$.  .

Conversely, assume $u$ conjugates ${\tilde w}(v)$ to ${\tilde w}(v')$.  Let $x = (x_1 \dots x_r)$ be a cycle in our chosen representation of $w$, such
that the induced cycle of ${\tilde w}(v)$ is $(x_1 \dots x_r \, vx_1 \dots vx_r \dots v^{d-1}x_1 \dots v^{d-1}x_r)$.  Since $u$ conjugates ${\tilde w}(v)$ to ${\tilde w}(v')$the cycle
$(ux_1 \dots ux_r \, uvx_1 \dots uvx_r \dots uv^{d-1}x_1 \dots uv^{d-1}x_r)$ is a cycle of ${\tilde w}(v')$.  This cycle contains a cycle
$(y_1 \dots y_{r'})$ of our chosen representation of $w$.  Thus, by construction of ${\tilde w}(v')$, there is an $s \in \ZZ/dr\ZZ$ such that the sequence:
$$ux_1, \dots ux_r, uvx_1, \dots uvx_r, \dots uv^{d-1}x_1, \dots uv^{d-1}x_r$$ 
coincides with the cyclic shift by $s$ of the sequence:
$$y_1, \dots y_{r'}, vy_1, \dots vy_{r'}, \dots v^{d'-1}y_1, \dots v^{d'-1}y_{r'},$$
where $dr = d'r'$.

Since $u$ and $v$ both lie in $N_{\lambda}$, it follows that for all $1 \leq i \leq r$, and all integers $j$, $x_i$ lies in the same $N_{\lambda}$-orbit as $y_{i+s+jr'}$, where the indices are
taken modulo $r$.  Let $a = (r,r')$.  Then for all $i$, $x_i$ lies in the same $N_{\lambda}$-orbit as $x_{i+a}$.  Thus the permutation that takes $x_i$ to $x_{i+a}$ for all $i$ and fixes all other elements
lies in $N_{\lambda}$.  This permutation clearly normalizes $w$ and fixes all orbits of $w$, so must be the identity since $w$ is $N_{\lambda}$-minimal.  Thus $a = r$, so $r$ divides $r'$.  Similar reasoning shows that
$r'$ divides $r$, so in fact $r$ equals $r'$.

Now for all $1 \leq i \leq r$, $x_i$ is in the same $N_{\lambda}$-orbit as $y_{i+s}$; there is thus an element of $N_{\lambda,w}$ that carries the cycle $(x_1 \dots x_r)$ of $w$ to the cycle $(y_1 \dots y_r)$.
Since we chose our cycle representation of $w$ to be $N_{\lambda,w}$-stable, there is also an element of $N_{\lambda,w}$ that takes $x_i$ to $y_i$ for all $i$.  There is thus an element of $N_{\lambda,w}$
that takes $x_i$ to $x_{i+s}$ for all $i$, and fixes all other elements of $\{1, \dots, n\}$.  Since $w$ is minimal, this is impossible unless $r$ divides $s$.

We have thus established that $u$ takes the cycle $x = (x_1 \dots x_r)$ of $w$ to the cycle $(v^ey_1, \dots, v^ey_r)$ for some $e$, which is also a cycle of $w$.  Since $x$ was arbitrary, $u$ preserves the cycles of
$w$ and thus normalizes $w$.  But now we have ${\tilde w}(uvu^{-1}) = u{\tilde w}(v) u^{-1} = {\tilde w}(v')$, and it is easy to see that this implies that $uvu^{-1} = v'$.

For the final claim, let $x= (x_1 \dots x_r)$ be a cycle in our chosen representation of $w$, contained in the cycle $(x_1 \dots x_r vx_1 \dots vx_r \dots v^{d-1}x_1 \dots v^{d-1} x_r)$ of ${\tilde w}(v)$.
The subgroup of $N_{\lambda,w}$ preserving the latter cycle acts on it by cyclic shifts, and minimality of $w$ implies that $r$ divides the length of any of these shifts.  On the other hand it is clear that
the permutation that agrees with $v$ on the set $\{x_1,\dots,x_r,vx_1,\dots,vx_r,\dots,v^{d-1}x_1, \dots v^{d-1}x_r\}$ and is the identity elsewhere induces a shift of length $r$ on this cycle.  Our construction
of ${\tilde w}(v)^{\min}$ thus demands that we break this cycle of ${\tilde w}(v)$ into cycles of length $r$.  Doing this for all cycles of ${\tilde w}(v)$ recovers $w$.
\end{proof}

We now show:
\begin{lemma} Suppose $w$ is $N_{\lambda}$-minimal and $w' \sim w$.  Then there exists $v \in N_{\lambda}(w)$ such that $w'$ is $N_{\lambda}$-conjugate to ${\tilde w}(v)$.
\end{lemma}
\begin{proof}
We first construct a cycle representation of $w'$ such that the induced cycle representation of $(w')^{\min}$ is $N_{\lambda,(w')^{\min}}$-invariant.  To do this, first fix any orbit of $w'$
and choose a representation of the corresponding cycle; we then obtain representations of one or more cycles in $(w')^{\min}$, all of which are $N_{\lambda}$-conjugate.  We then proceed inductively: for each
orbit $x$ of $w'$, choose a cycle representation arbitrarily and consider the resulting cycles of $(w')^{\min}$.  If these cycles are not $N_{\lambda}$-conjugate to other cycles of $(w')^{\min}$ that have already been
constructed, there is nothing further to do and we may proceed to the next orbit of $w'$.  If they are conjugate to cycles we have already constructed, it need not be the case that the corresponding cycle 
{\em representations} are $N_{\lambda}$-conjugate to those already extant (they may differ by a cyclic shift).  However, adjusting our choice of cycle representation
of $x$ by a suitable shift we may arrange that this holds.  Proceeding inductively we arrive at a $(w')^{\min}$ and an $N_{\lambda,(w')^{\min}}$-invariant cycle representation of it.

Now for each cycle $x$ of $w'$, our chosen decompositions give $x = (x_1 \dots x_{rs})$ in $w'$, for some integers $r,s$ such that the corresponding cycles of $(w')^{\min}$ are
$(x_1 \dots x_r)$, $(x_{r+1} \dots x_{2r})$, etc.  Let $v'_x$ be the permutation that takes $x_i$ to $x_{i+r}$ for all $i$ (indices modulo $rs$); then $v'_x$ lies in $N_{\lambda}$.  Taking $v'$ to be the product
over the orbits $x$ of the $v'_x$ we obtain an element of $N_{\lambda,(w')^{\min}}$ such that $w' = \widetilde{(w')^{\min}}(v')$.  Now if $w' \sim w$ then there exists a $u \in N_{\lambda}$ such that
$u(w')^{\min} u^{-1} = w$; taking $v= uv'u^{-1}$ we find that $uw'u^{-1} = {\tilde w}(v)$.
\end{proof}

\begin{corollary}
Suppose $w$ is $N_{\lambda}$-minimal.  The number of $w'$ such that $w' \sim w$ is equal to the order of $N_{\lambda}$.
\end{corollary}
\begin{proof}
The previous lemmas show that the set of such $w'$ is the union of the $N_{\lambda}$-conjugacy classes of ${\tilde w}(v)$, as $v$ runs over a set of representatives for the conjugacy classes
in $N_{\lambda,w}$.  For each such $v$ the size of its $N_{\lambda}$-conjugacy class is equal to $\frac{\# N_{\lambda}}{\# N_{\lambda,v}}$.  For each $v$, the index of $N_{\lambda,w}$ in $N_{\lambda,v}$
is equal to the size of the $N_{\lambda,w}$-conjugacy class $C_v$ of $v$.  Thus the total number of such $w'$ is the sum:
$${\# N_{\lambda}} \sum_v \frac{\# C_v}{\# N_{\lambda, w}}$$
which is clearly equal to $\# N_{\lambda}$.
\end{proof}

We now relate the equivalence $\sim$ to $M_{\lambda}$.  Specifically, we observe:
\begin{proposition}
Suppose that $w \sim w'$.  Then $\lambda$ is trivial on $T^w_q$ if, and only if, $\lambda$ is trivial on $T^{w'}_q$.
\end{proposition}
\begin{proof}
It suffices to show this in the case where $w' = w^{\min}$ (for some chosen cycle representation of $w$), as we can deduce any other case from this one and $N_{\lambda}$-conjugacy.

Let $S_{\lambda}$ be the set of $N_{\lambda}$-orbits on $\{1,\dots,n\}$, and $f: \{1,\dots,n\} \rightarrow S_{\lambda}$ the map that sends an element to its $N_{\lambda}$-orbit.  There exists
a map $g: S_{\lambda} \rightarrow \ZZ$ such that on the diagonal matrix $t$ with entries $t_1,\dots, t_n$, we have $\lambda(t) = \prod_i t_i^{g(f(i))}$.

An element of $T^w_q$ is a diagonal matrix whose entries $t_i$ satisfy $t_{w(i)} = t_i^q$ for all $i$.  In particular, for each $i$, $t_i$ is a $q^{d_i} - 1$st root of unity, where $d_i$ is the
size of the $w$-orbit of $i$.  In particular, $\lambda$ is trivial on $T^w_q$ if, and only if, for all $i$ the sum:
$$\Sigma_i = \sum\limits_{j=0}^{d_i - 1} q^j g(f(w^j(i)))$$
is divisible by $q^{d_i} - 1$.

In $w^{\min}$ the $w$-orbit of $i$ breaks up as a union of $N_{\lambda}$-conjugate orbits, each of size $r$.  In particular for each $j$, the elements $w^j(i)$ and $w^{j+r}(i)$ lie in
the same $N_{\lambda}$-orbit, so $g(w^j(i)) = g(w^{j+r}(i))$.  This means that the sum $\Sigma_i$ can be rewritten as:
$$\Sigma_i = (1 + q^r + \dots + q^{d_i-r}) \sum\limits_{j=0}^{r-1} q^j g(f(w^j(i))).$$
In particular $\Sigma_i$ is divisible by $q^{d_i} - 1$ if, and only if, the sum:
$$\sum\limits_{j=0}^{r-1} q^j g(f(w^j(i)))$$
is divisible by $q^r - 1$.  But this is precisely the condition for $\lambda$ to be trivial on $w^{\min}$.
\end{proof}

From this it follows that the quotient $\frac{M_{\lambda}}{\# N_{\lambda}}$ counts the number of $N_{\lambda}$-minimal orbits of $w$ in $S_n$ such that $\lambda$ is trivial on $T_q^w$.  In particular
this quotient is an integer.  This completes the proof of Lemma~\ref{lemma:count} and hence of Theorem~\ref{thm:B-symmetrizing}.

\section{Deformation theory} \label{sec:deformations}

In this section we examine the local deformation theory of a representation $\rhobar: G_F \rightarrow \GL_n(k)$.
As in previous sections, let $I_F^{(\ell)}$ denote the prime to $\ell$ part of the inertia group of $F$, and
fix a topological generator $\tsigma$ of $I_F/I_F^{(\ell)}$ and a Frobenius element $\tFr$ in $W_F/I_F^{(\ell)}$.

We first recall some results of Clozel-Harris-Taylor:

\begin{proposition}[\cite{CHT}, Lemmas 2.4.11-2.4.13] \label{prop:CHT}
Let $\taubar$ be an irreducible representation of $I_F^{(\ell)}$ over $k$, and let $G_{\taubar}$ be the
subgroup of $G_F$ that preserves $\taubar$ under conjugation.  Then:
\begin{enumerate}
\item $\taubar$ lifts uniquely to a representation $\tau$ of $I_F^{(\ell)}$ over $W(k)$.
\item $\tau$ extends uniquely to a representation of $I_F \cap G_{\taubar}$ of determinant prime to $\ell$.
\item $\tau$ extends (non-uniquely) to a representation of $G_{\taubar}$.
\end{enumerate}
If we fix a representation $\tau$ of $G_{\taubar}$ as in part (3), we obtain an action
of $G_{\taubar}/I_F^{(\ell)}$ on $\Hom_{I_F^{(\ell)}}(\tau,\rho)$ for any $G_F$-module $\rho$.
Moreover, we have a direct sum decomposition of $G_F$-modules:
$$\rho \cong \bigoplus_{[\taubar]} \Ind_{G_{\taubar}}^{G_F} [\Hom_{I_F^{(\ell)}}(\tau,\rho) \otimes \tau],$$
where $\taubar$ runs over $G_F$-conjugacy classes of irreducible representations of $I_F^{(\ell)}$
over $k$.
\end{proposition}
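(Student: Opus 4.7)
The plan is to handle the three lifting/extension statements (1)--(3) in order and then derive the decomposition by Clifford theory.

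For (1), observe that $\taubar$, being continuous, factors through a finite quotient $Q$ of $I_F^{(\ell)}$, and $|Q|$ is prime to $\ell$ by the very definition of $I_F^{(\ell)}$. Hence $k[Q]$ is semisimple, its primitive idempotents lift uniquely to $W(k)[Q]$, and $W(k)[Q]$ is a product of matrix algebras over unramified extensions of $W(k)$. This gives a bijection between absolutely irreducible $k[Q]$-modules and absolutely irreducible $W(k)[Q]$-modules, and in particular the unique lift $\tau$ of $\taubar$. A useful by-product is that $n := \dim \taubar$ divides $|Q|$ and so is prime to $\ell$.

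For (2) and (3), the key structural fact is that $I_F/I_F^{(\ell)}$, being the $\ell$-part of tame inertia, is procyclic pro-$\ell$; hence so is the closed subgroup $(I_F \cap G_{\taubar})/I_F^{(\ell)}$. Choose a topological generator $\bar s$ and lift it to $s \in I_F \cap G_{\taubar}$. Since $s$ preserves $[\taubar]$, Schur's lemma supplies an intertwiner $A \colon \tau \risom \tau^s$, unique up to a scalar in $W(k)^{\times}$. Setting $\tau(s) = A$ and extending by continuity produces an extension of $\tau$ to $I_F \cap G_{\taubar}$; two such extensions differ by twisting by a continuous character $\chi$ of the pro-$\ell$ quotient, and their determinants differ by $\chi^n$. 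Because $n$ is prime to $\ell$, the $n$th-power map is a bijection on continuous pro-$\ell$ characters, so exactly one $\chi$ annihilates the pro-$\ell$ part of the determinant. This gives (2). For (3), the analogous construction applied to the quotient $G_{\taubar}/(I_F \cap G_{\taubar})$, a closed subgroup of the procyclic group $W_F/I_F$, still produces extensions, but this quotient is no longer pro-$\ell$ and the determinant-rigidification trick breaks down, explaining the failure of uniqueness.

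For the decomposition, apply Clifford theory directly. Since $|Q|$ is prime to $\ell$, the restriction $\rho|_{I_F^{(\ell)}}$ is semisimple and splits into isotypic components indexed by irreducible $k$-representations of $I_F^{(\ell)}$. Conjugation by $G_F$ permutes these components, and grouping them into $G_F$-orbits yields $\rho \cong \bigoplus_{[\taubar]} \Ind_{G_{\taubar}}^{G_F} \rho_{\taubar}$, where $\rho_{\taubar}$ is the $\taubar$-isotypic piece, stable under $G_{\taubar}$ by definition. Using the extension $\tau$ constructed in (3), the standard tensor factorization of Clifford theory gives $\rho_{\taubar} \cong \Hom_{I_F^{(\ell)}}(\tau, \rho) \otimes \tau$ as $G_{\taubar}$-modules. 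The main obstacle is (2): one has to check that the prime-to-$\ell$ condition on $n = \dim \taubar$ really does let the determinant condition rigidify the extension, which is precisely where the hypothesis $\ell \neq p$ genuinely enters (without it, $I_F/I_F^{(\ell)}$ would not be purely pro-$\ell$ and the uniqueness in (2) would fail).
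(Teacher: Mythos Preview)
The paper does not supply its own proof of this proposition; it is stated with a citation to Lemmas 2.4.11--2.4.13 of Clozel--Harris--Taylor and then used as a black box. Your sketch is essentially the argument one finds in CHT, so there is no substantive divergence to compare.

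One step of your outline does need more care. In part (2) you write that ``setting $\tau(s)=A$ and extending by continuity produces an extension of $\tau$ to $I_F\cap G_{\taubar}$,'' but this continuity is exactly the point that has to be checked: you need the assignment $n\mapsto A^{n}$ to extend from $\ZZ$ to a continuous map $\ZZ_\ell\to\GL_n(W(k))$, which is not automatic for an arbitrary intertwiner $A$. The fix is to note that conjugation by $s$ has finite $\ell$-power order on the finite image $\tau(I_F^{(\ell)})$, so $A^{\ell^{a}}$ is a scalar $c\in W(k)^{\times}$ for some $a$; writing $c=\zeta u$ with $\zeta$ a Teichm\"uller lift and $u\in 1+\ell W(k)$, one rescales $A$ by an $\ell^{a}$th root of $\zeta^{-1}$ (available since $k$ is algebraically closed) so that the new $c$ lies in $1+\ell W(k)$, whence $A^{\ell^{a+k}}\to 1$ and the extension by continuity is legitimate. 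Once existence is in hand, your determinant-rigidification argument for uniqueness goes through as you wrote it.

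Two minor remarks: in part (3) the ambient quotient should be $G_F/I_F\cong\hat{\ZZ}$ rather than $W_F/I_F$, since the proposition is stated for $G_F$; and in the final decomposition, ``$\rho|_{I_F^{(\ell)}}$ is semisimple'' is only literally correct over a field, though the isotypic decomposition you actually use holds over any $W(k)$-algebra because the relevant idempotents already live in $W(k)[Q]$.
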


Fix, for each $G_F$-conjugacy class of $\taubar$, a $\tau$ as in the proposition.  Suppose we are given a
representation $\rho_A: G_F \rightarrow \GL_n(A)$.  We then obtain a direct sum decomposition:
$$\rho_A = \bigoplus_{[\taubar]} \Ind_{G_{\taubar}}^{G_F} [\Hom_{I_F^{(\ell)}}(\tau,\rho_A) \otimes \tau].$$
It is clear that $\Hom_{I_F^{(\ell)}}(\tau,\rho_A)$ is a free $A$-module for all $\tau$, and that the collection
of $G_{\taubar}$-representations $\Hom_{I_F^{(\ell)}}(\tau,\rho)_A$ determines the representation $\rho_A$ up to isomorphism.

\begin{definition} A {\em pseudo-framing} of a continuous representation $\rho_A: G_F \rightarrow \GL_n(A)$ is a choice,
for each $\taubar$, of basis for each $\Hom_{I_F^{(\ell)}}(\tau,\rho_A)$.  A {\em pseudo-framed deformation}
of a continuous representation $\rhobar: G_F \rightarrow \GL_n(k)$ (together with a chosen pseudo-framing) is a lift
$\rho_A: G_F \rightarrow \GL_n(A)$ of $\rhobar$, together with a pseudo-framing of $\rho_A$ that lifts the chosen
pseudo-framing of $\rho$.
\end{definition}

Fix a $\rhobar$ and a pseudo-framing of $\rhobar$, and, for each $\taubar$, let $\rhobar_{\taubar}$ be the
$G_{\taubar}$-representation $\Hom_{I_F^{(\ell)}}(\tau,\rhobar)$.  Let $R_{\rhobar}^{\diamond}$ be the completed tensor product
$$\hat{\bigotimes}_{[\taubar]} R_{\rhobar_{\taubar}}^{\Box}$$
of the universal framed deformation rings of the $\rhobar_{\taubar}$.  Over each such ring we have the
universal framed deformation $\rho_{\taubar}^{\Box}$ of $\rhobar_{\taubar}$.

Using these, we construct a
representation:
$$\rho^{\diamond} := \bigoplus_{[\taubar]} \Ind_{G_{\taubar}}^{G_F} [\rho_{\taubar}^{\Box} \otimes \tau]$$
that has a natural pseudo-framing induced by the universal framings of the representations $\rho^{\Box}_{\taubar}$.
One easily verifies that the pair $R_{\rho}^{\diamond}, \rho^{\diamond}$ is a universal object for
pseudo-framed deformations of $\rho$.

For each $\taubar$, the formal group ${\mathcal G}_{\rhobar_{\taubar}}^{\Box}$ acts on $\Spf R_{\rho_{\taubar}}^{\Box}$
by ``change of frame''.  Let ${\mathcal G}^{\diamond}_{\rhobar}$ be the product of the ${\mathcal G}_{\rhobar_{\taubar}}^{\Box}$.
Then ${\mathcal G}^{\diamond}_{\rhobar}$ acts on $\Spf R_{\rhobar}^{\diamond}$ by ``change of pseudo-framing''.

For computational purposes it is often easier to work with $R_{\rhobar}^{\diamond}$ rather than $R_{\rhobar}^{\Box}$,
as $R_{\rhobar}^{\diamond}$ can be made quite explicit.  The two rings are related in a natural way: 
one has a ring $R_{\rhobar}^{\Box,\diamond}$ that is universal for triples consisting of
a deformation $\rho$ of $\rhobar$, a framing of $\rho$ lifting that of $\rhobar$, and a pseudo-framing of $\rho$
lifting that of $\rhobar$.  Then $\Spf R_{\rhobar}^{\Box,\diamond}$ is a (split) ${\mathcal G}_{\rhobar}^{\diamond}$-torsor
over $\Spf R_{\rhobar}^{\Box}$ and a (split) ${\mathcal G}_{\rhobar}^{\Box}$-torsor over $\Spf R_{\rhobar}^{\diamond}$.

We immediately deduce:
\begin{cor}
The ring $R_{\rhobar}^{\Box}$ is a reduced, $\ell$-torsion free local complete intersection.
\end{cor}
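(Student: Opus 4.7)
The plan is to propagate the three properties through the chain
$$R_{q,n} \;\leadsto\; R_{\rhobar_{\taubar}}^{\Box} \;\leadsto\; R_{\rhobar}^{\diamond} \;\leadsto\; R_{\rhobar}^{\Box,\diamond} \;\leadsto\; R_{\rhobar}^{\Box},$$
starting from the structural results on $R_{q,n}$ proved in Section~\ref{sec:tame} and ending at $R_{\rhobar}^{\Box}$ via the two (split) torsor relations described just before the corollary.

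First I would identify each factor $R_{\rhobar_{\taubar}}^{\Box}$ with the completion at a maximal ideal of one of the rings of Section~\ref{sec:tame}. The point is that $I_F^{(\ell)}$ acts trivially on $\rhobar_{\taubar}$ (the wild action has been absorbed into $\tau$), and since $\GL_{m_{\taubar}}(A)$ admits no nontrivial prime-to-$\ell$ pro-finite subgroup near the identity for any $\ell$-adically complete local $A$, every continuous framed lift of $\rhobar_{\taubar}$ factors through $G_{\taubar}/I_F^{(\ell)}$. Applying the universal property of Proposition~\ref{prop:tame universal} to the subextension of $\overline{F}/F$ fixed by $G_{\taubar}$ (whose residue field has order some $q^{f_{\taubar}}$) identifies $R_{\rhobar_{\taubar}}^{\Box}$ with the completion of $R_{q^{f_{\taubar}}, m_{\taubar}}$ at the maximal ideal cut out by $\rhobar_{\taubar}$. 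By the results of Section~\ref{sec:tame}, $R_{q^{f_{\taubar}}, m_{\taubar}}$ is reduced, $W(k)$-flat, and locally a complete intersection, and all three of these properties pass to completion at a maximal ideal.

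Next I would pass from the collection of factors to $R_{\rhobar}^{\diamond} = \widehat{\bigotimes}_{W(k),[\taubar]} R_{\rhobar_{\taubar}}^{\Box}$. Completed tensor products over $W(k)$ of $W(k)$-flat local complete intersections remain $W(k)$-flat local complete intersections, by tensoring the presentations as quotients of power series rings by regular sequences. Reducedness of $R_{\rhobar}^{\diamond}$ would follow by mimicking the argument of Section~\ref{sec:tame}: the generic fibers of each $R_{\rhobar_{\taubar}}^{\Box}$ are generically smooth by the Choi-style computation, the completed tensor product then has generically smooth (hence reduced) generic fiber, and combined with the local complete intersection property and the unmixedness theorem this forces $R_{\rhobar}^{\diamond}$ itself to be reduced.

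Finally I would transfer to $R_{\rhobar}^{\Box}$ through $R_{\rhobar}^{\Box,\diamond}$. Because $\Spf R_{\rhobar}^{\Box,\diamond}$ is a split $\mathcal{G}_{\rhobar}^{\Box}$-torsor over $\Spf R_{\rhobar}^{\diamond}$ and simultaneously a split $\mathcal{G}_{\rhobar}^{\diamond}$-torsor over $\Spf R_{\rhobar}^{\Box}$, with both formal groups smooth of finite dimension, the splittings yield isomorphisms
$$R_{\rhobar}^{\Box,\diamond} \;\cong\; R_{\rhobar}^{\diamond}[[x_1,\dots,x_N]] \;\cong\; R_{\rhobar}^{\Box}[[y_1,\dots,y_M]]$$
of complete local rings. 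Since formal power series extensions preserve and reflect each of the properties ``reduced'', ``$W(k)$-flat'', and ``local complete intersection'', each property lifts from $R_{\rhobar}^{\diamond}$ to $R_{\rhobar}^{\Box,\diamond}$ along the first isomorphism and then descends from $R_{\rhobar}^{\Box,\diamond}$ to $R_{\rhobar}^{\Box}$ along the second. The main obstacle I expect is not at the torsor step but at the middle step: preserving reducedness under the completed tensor product requires the Choi-style generic smoothness input, rather than any formal commutative algebra.
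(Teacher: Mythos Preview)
Your proposal is correct and follows essentially the same route as the paper: identify each $R_{\rhobar_{\taubar}}^{\Box}$ with a completion of some $R_{q',m}$, pass to the completed tensor product $R_{\rhobar}^{\diamond}$, and then transfer to $R_{\rhobar}^{\Box}$ via the split-torsor relations through $R_{\rhobar}^{\Box,\diamond}$. The paper's proof is terser and simply appeals to Section~\ref{sec:tame} at the last step, whereas you correctly flag that reducedness of the completed tensor product is the one place requiring the generic-smoothness input rather than pure commutative algebra.
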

\begin{proof}
The construction above shows that it suffices to prove the same claim with $R_{\rhobar}^{\Box}$
replaced by $R_{\rhobar}^{\diamond}$.  But the latter is a completed tensor product of rings of
the form $R_{\rhobar_{\taubar}}^{\Box}$, and each of these is isomorphic to the completion of a ring
of the form $R_{q,n}$ (with $q$ and $n$ depending on $\taubar$)
at a maximal ideal.  The result thus follows from the results of section~\ref{sec:tame}.
\end{proof}

Moreover, we may canonically identify both the ${\mathcal G}_{\rhobar}^{\Box}$-invariant elements of $R_{\rhobar}^{\Box}$ 
and the ${\mathcal G}_{\rhobar}^{\diamond}$-invariant elements of $R_{\rhobar}^{\diamond}$ with
the ${\mathcal G}_{\rhobar}^{\Box} \times {\mathcal G}_{\rhobar}^{\diamond}$-invariant elements of
$R_{\rhobar}^{\Box,\diamond}$.  In particular these spaces of invariants are naturally isomorphic.

Given a choice
of framing of $\rho^{\diamond}$, we get a map $R_{\rhobar}^{\Box} \rightarrow R_{\rhobar}^{\diamond}$.  When restricted to
${\mathcal G}_{\rhobar}^{\Box}$-invariants this map is the isomorphism of 
$\left(R_{\rhobar}^{\Box}\right)^{{\mathcal G}_{\rhobar}^{\Box}}$ with
$\left(R_{\rhobar}^{\diamond}\right)^{{\mathcal G}_{\rhobar}^{\diamond}}$ constructed above.  Summarizing, we have:

\begin{lemma} \label{lem:compare invariants}
For any choice of framing of $\rho^{\diamond}$, the induced map: $R_{\rhobar}^{\Box} \rightarrow R_{\rhobar}^{\diamond}$
identifies the ${\mathcal G}_{\rhobar}^{\Box}$-invariant elements of $R_{\rhobar}^{\Box}$ with the
${\mathcal G}_{\rhobar}^{\diamond}$-invariant elements of $R_{\rhobar}^{\diamond}$.  (In particular the image of this
set of invariant elements is saturated in $R_{\rhobar}^{\diamond}$.)
\end{lemma}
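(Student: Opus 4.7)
The plan is to factor the map $R_{\rhobar}^{\Box} \to R_{\rhobar}^{\diamond}$ induced by the choice of framing through the auxiliary ring $R_{\rhobar}^{\Box,\diamond}$, and then use the split torsor structures on both sides. First I would verify that a framing of $\rho^{\diamond}$ lifting the fixed framing of $\rhobar$ is equivalent to a section $s^\ast\colon R_{\rhobar}^{\Box,\diamond}\to R_{\rhobar}^{\diamond}$ of the natural embedding $i\colon R_{\rhobar}^{\diamond}\hookrightarrow R_{\rhobar}^{\Box,\diamond}$: applying the chosen framing to the universal pseudo-framed deformation over $R_{\rhobar}^{\diamond}$ produces a triple (deformation, framing, pseudo-framing), and the universal property of $R_{\rhobar}^{\Box,\diamond}$ then yields $s^\ast$ with $s^\ast \circ i = \mathrm{id}$. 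The map of the lemma is then the composition $R_{\rhobar}^{\Box}\xrightarrow{j}R_{\rhobar}^{\Box,\diamond}\xrightarrow{s^\ast}R_{\rhobar}^{\diamond}$, where $j$ is the natural embedding coming from forgetting the pseudo-framing.

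Next I would use the two split torsor descriptions noted just before the lemma to write
$$R_{\rhobar}^{\Box}=(R_{\rhobar}^{\Box,\diamond})^{{\mathcal G}_{\rhobar}^{\diamond}},\qquad R_{\rhobar}^{\diamond}=(R_{\rhobar}^{\Box,\diamond})^{{\mathcal G}_{\rhobar}^{\Box}}$$
via $j$ and $i$ respectively. Taking further invariants under the remaining group yields the canonical identifications
$$(R_{\rhobar}^{\Box})^{{\mathcal G}_{\rhobar}^{\Box}} = (R_{\rhobar}^{\Box,\diamond})^{{\mathcal G}_{\rhobar}^{\Box}\times{\mathcal G}_{\rhobar}^{\diamond}} = (R_{\rhobar}^{\diamond})^{{\mathcal G}_{\rhobar}^{\diamond}}$$
already mentioned preceding the lemma. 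For any ${\mathcal G}_{\rhobar}^{\Box}$-invariant $x\in R_{\rhobar}^{\Box}$, the element $j(x)$ lies in the joint-invariant subring in the middle, so there exists a unique $y \in (R_{\rhobar}^{\diamond})^{{\mathcal G}_{\rhobar}^{\diamond}}$ with $i(y) = j(x)$; the relation $s^\ast \circ i = \mathrm{id}$ then forces $s^\ast(j(x)) = y$. Hence the induced map $(R_{\rhobar}^{\Box})^{{\mathcal G}_{\rhobar}^{\Box}} \to (R_{\rhobar}^{\diamond})^{{\mathcal G}_{\rhobar}^{\diamond}}$ coincides with the canonical identification, independently of the choice of framing, and in particular is an isomorphism.

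Finally, the saturation clause is formal. The image on invariants equals $(R_{\rhobar}^{\diamond})^{{\mathcal G}_{\rhobar}^{\diamond}}$, and this subring is saturated in $R_{\rhobar}^{\diamond}$ because the latter is $\ell$-torsion free (by the corollary preceding the lemma): if $\ell r$ is ${\mathcal G}_{\rhobar}^{\diamond}$-invariant then $\ell(g\cdot r - r) = 0$ for every $g$, whence $g\cdot r = r$. The main point requiring genuine care in the argument is the translation of the datum of a framing of $\rho^{\diamond}$ into a splitting $s^\ast$ of the natural embedding $i$; once the moduli interpretations of $R_{\rhobar}^{\Box}$, $R_{\rhobar}^{\diamond}$, and $R_{\rhobar}^{\Box,\diamond}$ are kept straight and ``change of framing'' is not conflated with ``change of pseudo-framing,'' the remainder reduces to the bookkeeping above.
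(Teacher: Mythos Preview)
Your argument is correct and follows precisely the route the paper takes: both rings of invariants are identified with the joint $\mathcal{G}_{\rhobar}^{\Box}\times\mathcal{G}_{\rhobar}^{\diamond}$-invariants in $R_{\rhobar}^{\Box,\diamond}$ via the split torsor structures, and the map coming from a choice of framing restricts on invariants to this canonical identification. Your write-up in fact spells out more carefully than the paper does how the section $s^{\ast}$ arises from the framing and why the saturation clause follows from $\ell$-torsion freeness.
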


\section{The rings $R_{\nu}$} \label{sec:global galois}

Let $\rhobar: W_F/I_F^{(\ell)} \rightarrow \GL_n(k)$ be a representation.  Then we have a corresponding map
$x: R_{q,n} \rightarrow k$, with kernel ${\mathfrak m}$.  It follows easily from the universal property of
the pair $(R_{q,n}, \rho_{F,n})$ that the completion $(R_{q,n})_{\mathfrak m}$ is isomorphic to
$R_{\rhobar}^{\diamond}$, and that this isomorphism is induced by the base change of $\rho_{F,n}$ to
$(R_{q,n})_{\mathfrak m}$.  In other words, $R_{q,n}$ is a global object that interpolates the formal deformation
rings $R_{\rhobar}^{\diamond}$ for $\rhobar$ trivial on $I_F^{(\ell)}$.

We would like to construct similar objects for $\rhobar$ whose restriction to $I_F^{(\ell)}$ is nontrivial.
Let us define:

\begin{defn} An $\ell$-inertial type is a representation $\nu$ of $I_F^{(\ell)}$ over $k$ that
extends to a representation of $W_F$.
\end{defn}

Note that (as $I_F^{(\ell)}$ is a profinite group of pro-order prime to $\ell$), such a representation
lifts uniquely to a representation of $I_F^{(\ell)}$ over $W(k)$, and this lift also extends to a 
representation of $W_F$.  We will thus consider an $\ell$-inertial type $\nu$ as a representation over 
$W(k)$ rather than over $k$ whenever it is convenient to do so. 

Now fix an $\ell$-inertial type $\nu$, and for each irreducible representation
$\taubar$ of $I_F^{(\ell)}$ over $k$, let $n_{\taubar}$ be the multiplicity of $\taubar$ in $\nu$
(note that $n_{\taubar}$ depends only on the $W_F$-conjugacy class of $\taubar$.)  Let $W_{\taubar}$
be the subgroup of $W_F$ that fixes $\taubar$ under conjugation, let $F_{\taubar}$ be the fixed field
of $W_{\taubar}$, and let $q_{\taubar}$ denote the cardinality of the residue field of $F_{\taubar}$.

We define $R_{\nu}$ to be the tensor product:
$$R_{\nu} := \bigotimes_{\taubar} R_{q_{\taubar},n_{\taubar}}$$
where $\taubar$ runs over a set of representatives for the $W_F$-conjugacy classes of irreducible
representations appearing in $\nu$.  For each $\taubar$ we have a representation $\rho_{F_{\taubar},n_{\taubar}}$
over $R_{q_{\taubar},n_{\taubar}}$, which we regard as a representation over $R_{\nu}$ in the obvious way.

Define the representation $\rho_{\nu}: W_F \rightarrow \GL_n(R_{\nu})$ as follows:
$$\rho_{\nu} := \bigoplus_{\taubar} \Ind_{W_{\taubar}}^{W_F} \rho_{F_{\taubar},n_{\taubar}} \otimes \tau,$$
where $\taubar$ runs over a set of representative for the $W_F$-conjugacy classes of irreducible representations
appearing in $\nu$, and for each such $\taubar$, we have chosen an extension $\tau$ of $\taubar$
to a representation $W_F \rightarrow \GL_n(W(k))$ as in Proposition~\ref{prop:CHT}.  Note that
$\rho_{\nu}$ inherits a pseudo-framing from the natural framings of the $\rho_{F_{\taubar},n_{\taubar}}$,
and that the restriction of $\rho_{\nu}$ to $I_F^{(\ell)}$ is given by $\nu$.

For a map $x: R_{\nu} \rightarrow k$, the specialization $(\rho_{\nu})_x$ is a pseudo-framed
representation $W_F \rightarrow \GL_n(k)$, whose restriction to $I_F^{(\ell)}$ is given by $\nu$.
This defines a bijection between $k$-points of $\Spec R_{\nu}$ and such pseudo-framed representations.
Moreover, it follows directly from the constructions of $R_{\nu}$ and $R_{(\rho_{\nu})_x}^{\diamond}$
that the completion of $R_{\nu}$ at the maximal ideal corresponding to $x$ is naturally isomorphic
to $R_{(\rho_{\nu})_x}^{\diamond}$, in a manner compatible with the universal family on the latter.

Moreover, the universal property for each $R_{q_{\taubar},n_{\taubar}}$ immediately yields:
\begin{prop} \label{prop:universal}
For any finitely generated, $\ell$-adically separated $W(k)$-algebra $A$, and any pseudo-framed, $\ell$-adically continuous
representation $\rho: W_F \rightarrow \GL_n(A)$ whose restriction to $I_F^{(\ell)}$ is isomorphic to $\nu$,
there is a unique map: $R_{\nu} \rightarrow A$ such that $\rho$ is the base change of $\rho_{\nu}$.
\end{prop}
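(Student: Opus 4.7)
The plan is to reduce to Proposition~\ref{prop:tame universal} one $\taubar$ at a time, using the Clozel--Harris--Taylor isotypic decomposition to reassemble the pieces. Given $(A,\rho)$ as in the statement, with the prescribed pseudo-framing, the direct sum decomposition of Proposition~\ref{prop:CHT} (applied now with $G_F$ replaced by $W_F$, which is the same argument: $I_F^{(\ell)}$ is a normal closed subgroup of pro-order prime to $\ell$, so isotypic decomposition lifts uniquely from $k$ to $A$) produces, for each $W_F$-conjugacy class $[\taubar]$ appearing in $\nu$, a free $A$-module $M_{\taubar} := \Hom_{I_F^{(\ell)}}(\tau,\rho)$ of rank $n_{\taubar}$ carrying a natural $W_{\taubar}$-action. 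The pseudo-framing on $\rho$ by definition supplies a basis of each $M_{\taubar}$.

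Next I would identify $M_{\taubar}$ with a framed continuous representation of $W_{F_{\taubar}}/I_{F_{\taubar}}^{(\ell)}$ to which Proposition~\ref{prop:tame universal} can be applied. The $W_{\taubar} = W_{F_{\taubar}}$-action on $M_{\taubar}$ is automatically trivial on $I_F^{(\ell)}$ by construction (the two appearances of $\tau(w)$ for $w\in I_F^{(\ell)}$ cancel against the $I_F^{(\ell)}$-equivariance of elements of $M_{\taubar}$), and since $I_{F_{\taubar}}^{(\ell)}\subset I_F^{(\ell)}$ the representation $M_{\taubar}$ factors through $W_{F_{\taubar)}}/I_{F_{\taubar}}^{(\ell)}$. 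Continuity in the $\ell$-adic sense is preserved because the hom construction is compatible with the $\ell$-adic filtration on $A$, and $A$ is $\ell$-adically separated by hypothesis. Applying Proposition~\ref{prop:tame universal} to each $M_{\taubar}$ yields a unique map $\varphi_{\taubar}: R_{q_{\taubar},n_{\taubar}}\to A$ through which $M_{\taubar}$ arises as a base change of $\rho_{F_{\taubar},n_{\taubar}}$.

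Now take the tensor product $\varphi := \bigotimes_{\taubar}\varphi_{\taubar}: R_{\nu}\to A$. By construction, base changing the defining formula
\[
\rho_{\nu} = \bigoplus_{\taubar}\Ind_{W_{\taubar}}^{W_F}\rho_{F_{\taubar},n_{\taubar}}\otimes \tau
\]
along $\varphi$ yields $\bigoplus_{[\taubar]}\Ind_{W_{\taubar}}^{W_F}(M_{\taubar}\otimes\tau)$, which is canonically $\rho$ by the CHT decomposition, and the identification respects the chosen pseudo-framings by construction. For uniqueness, any map $\psi:R_{\nu}\to A$ for which $\rho$ is the base change of $\rho_{\nu}$ restricts to a map $R_{q_{\taubar},n_{\taubar}}\to A$ (via the natural inclusion into the tensor product) for each $\taubar$ which realizes $M_{\taubar}$ as the base change of $\rho_{F_{\taubar},n_{\taubar}}$ with the induced framing; the uniqueness clause of Proposition~\ref{prop:tame universal} forces this restriction to agree with $\varphi_{\taubar}$, so $\psi=\varphi$.

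The main technical obstacle to watch is the bookkeeping in the second paragraph: one must verify that the $W_{F_{\taubar}}$-module $M_{\taubar}$, together with its pseudo-framing-induced basis, really is the data classified by $R_{q_{\taubar},n_{\taubar}}$ in the sense of Proposition~\ref{prop:tame universal}, and not some twist of it by the choice of extension $\tau$. This is where one needs to be careful that the recipe for $\rho_{\nu}$ (which fixes, once and for all, an extension $\tau$ of each $\taubar$) is used consistently on both sides of the identification, so that the same $\tau$ that appears in the definition of $\rho_{\nu}$ is the one used to define $M_{\taubar}$ from $\rho$. Once this compatibility is fixed, everything else is formal.
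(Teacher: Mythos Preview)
Your proposal is correct and follows exactly the approach the paper intends: the paper's own ``proof'' is the single sentence ``the universal property for each $R_{q_{\taubar},n_{\taubar}}$ immediately yields'', and you have simply unpacked that sentence by applying Proposition~\ref{prop:tame universal} to each isotypic piece $M_{\taubar}=\Hom_{I_F^{(\ell)}}(\tau,\rho)$ and tensoring the resulting maps together. Your caveat about keeping the choice of extension $\tau$ consistent on both sides is the right thing to flag, and the observation that $I_{F_{\taubar}}^{(\ell)}\subset I_F^{(\ell)}$ (in fact they coincide, since $I_F^{(\ell)}$ is pro-prime-to-$\ell$ and hence has no nontrivial image in the pro-$\ell$ group $I_{F_{\taubar}}/I_{F_{\taubar}}^{(\ell)}$) is exactly what makes $M_{\taubar}$ a representation of $W_{F_{\taubar}}/I_{F_{\taubar}}^{(\ell)}$ as required.
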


For each $\taubar$, the group $\GL_{n_{\taubar}}$ acts on $R_{q_{\taubar},n_{\taubar}}$.  Let ${\mathcal G}_{\nu}$
be the product of the $\GL_{n_{\taubar}}$; then ${\mathcal G}_{\nu}$ acts on $\Spec R_{\nu}$ by ``changing the
pseudo-frame''.

\section{Maps from $Z_{[L,\pi]}$ to $R_{\nu}$}

Now fix a pair $(L,\pi)$, where $L$ is a Levi subgroup of $\GL_n(F)$ and
$\pi$ is an irreducible supercuspidal $k$-representation of $L$.  The mod $\ell$ semisimple local
Langlands correspondence of Vigneras~\cite{vigss} attaches to $\pi$ a semisimple $k$-representation
$\rho$ of $W_F$.  Let $\overline{\nu}$ be the restriction of $\rho$ to $I_F^{(\ell)}$.  Then $\overline{\nu}$
lifts uniquely to a $W(k)$-representation $\nu$ of $I_F^{(\ell)}$, and we have:

\begin{prop}
The irreducible $\overline{\CK}$-representations of $\GL_n(F)$ that are objects of $\Rep_{W(k)}(\GL_n(F))_{[L,\pi]}$ correspond,
via local Langlands, to the $\overline{\CK}$-representations of $W_F$ whose restriction to $I_F^{(\ell)}$
is isomorphic to $\nu$.
\end{prop}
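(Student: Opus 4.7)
The plan is to combine Theorem~\ref{thm:bernstein char 0}, which characterizes membership in the block $\Rep_{W(k)}(\GL_n(F))_{[L,\pi]}$ via mod-$\ell$ reduction of supercuspidal supports, with the compatibility of classical local Langlands with parabolic induction and with Vigneras's mod-$\ell$ semisimple local Langlands. The key bridge is that restriction to $I_F^{(\ell)}$ is insensitive to mod-$\ell$ reduction: since $I_F^{(\ell)}$ has pro-order prime to $\ell$, any $k$-representation of it lifts uniquely to $W(k)$ (as in the definition of $\nu$), and any continuous $\overline{\CK}$-representation of $I_F^{(\ell)}$ is integral with a well-defined semisimple mod-$\ell$ reduction.

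For the forward direction, let $\Pi$ lie in $\Rep_{W(k)}(\GL_n(F))_{[L,\pi]}$, with supercuspidal support $(M,\tpi)$, where $M = \prod_i M_i$ and $\tpi = \otimes_i \tpi_i$. Writing $\sigma_i$ for the classical LLC of $\tpi_i$, compatibility with parabolic induction gives $\rho_\Pi = \bigoplus_i \sigma_i$. Theorem~\ref{thm:bernstein char 0} says $(M,\tpi)$ reduces modulo $\ell$ to $(L,\pi)$, and Vigneras's compatibility identifies the semisimple mod-$\ell$ reduction of $\bigoplus_i \sigma_i$ with Vigneras's $\rho$ attached to $(L,\pi)$. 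Restricting to $I_F^{(\ell)}$ and taking the unique $W(k)$-lift yields $\rho_\Pi|_{I_F^{(\ell)}} \cong \nu$.

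For the converse, suppose $\rho_\Pi|_{I_F^{(\ell)}} \cong \nu$, and let $\Pi$ belong to some block $\Rep_{W(k)}(\GL_n(F))_{[L',\pi']}$, with Vigneras parameter $\rho'$ and associated inertial type $\nu'$. The forward direction applied to this block gives $\nu \cong \nu'$, hence $\rho|_{I_F^{(\ell)}} \cong \rho'|_{I_F^{(\ell)}}$. Since $I_F/I_F^{(\ell)}$ is pro-$\ell$, it acts trivially on any semisimple $k$-representation, so $\rho|_{I_F} \cong \rho'|_{I_F}$. Decomposing into irreducible summands corresponding to the supercuspidal factorizations of $\pi$ and $\pi'$, Clifford theory for the pro-cyclic quotient $W_F/I_F$ implies that, after permutation, each summand of $\rho'$ is an unramified twist of a summand of $\rho$; by Vigneras's parametrization this translates to $(L',\pi') \sim (L,\pi)$, so $\Pi$ lies in $\Rep_{W(k)}(\GL_n(F))_{[L,\pi]}$.

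The main obstacle is the converse: recovering the supercuspidal inertial class on the $\GL$ side from the $I_F^{(\ell)}$-restriction of the Galois parameter. This rests on two standard ingredients, namely the Clifford-theoretic fact that an irreducible $W_F$-representation is determined up to unramified twist by its $I_F$-restriction, and the fact that unramified twists on the $\GL$ side correspond to unramified twists on the Galois side under Vigneras's bijection for $k$-supercuspidals. Everything else in the argument reduces to unwinding definitions and invoking the cited compatibilities.
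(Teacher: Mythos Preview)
Your proposal is correct and is a detailed unpacking of the paper's one-line proof, which simply invokes the compatibility of Vigneras's mod-$\ell$ semisimple correspondence with reduction modulo $\ell$.

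One expository wrinkle worth tightening: the sentence ``since $I_F/I_F^{(\ell)}$ is pro-$\ell$, it acts trivially on any semisimple $k$-representation, so $\rho|_{I_F}\cong\rho'|_{I_F}$'' is imprecise as written, because $\rho|_{I_F}$ is not a representation of the quotient $I_F/I_F^{(\ell)}$. The cleaner route is to bypass $I_F$ altogether: the fact that a normal pro-$\ell$ subgroup acts trivially on any \emph{irreducible} $k$-representation, applied to $(I_F\cap W_{\taubar})/I_F^{(\ell)}$ inside $W_{\taubar}/I_F^{(\ell)}$ in the decomposition of Proposition~\ref{prop:CHT}, shows that $\Hom_{I_F^{(\ell)}}(\tau,\rho_i)$ is one-dimensional with $W_{\taubar}$ acting through an unramified character. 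Hence an irreducible $k$-representation of $W_F$ is already determined up to unramified twist by its $I_F^{(\ell)}$-restriction, which is exactly what your step 4 uses; step 3 can simply be absorbed into it.
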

\begin{proof}
This is an easy consequence of the compatibility of Vigneras' mod $\ell$ correspondence with reduction mod $\ell$.
\end{proof}

This proposition shows that for any $\overline{\CK}$-point $x$ of $\Spec R_{\nu}$, the representation $\rho_x$
corresponds, via local Langlands (and Frobenius semsimplification if necessary) to an irreducible $\overline{\CK}$-representation $\Pi_x$
in $\Rep_{W(k)}(\GL_n(F))_{[L,\pi]}$, and hence to a $\overline{\CK}$-point of $\Spec Z_{[L,\pi]}$.  It is a natural question
to ask whether this map is induced by a map $Z_{[L,\pi]} \rightarrow R_{\nu}$.  Indeed, we conjecture:

\begin{conjecture}[Weak local Langlands in families] \label{conj:weak}
There is a map $Z_{[L,\pi]} \rightarrow R_{\nu}$ such that the induced map on $\overline{\CK}$-points
takes a point $x$ of $\Spec R_{\nu}$ to the $\overline{\CK}$-point of $Z_{[L,\pi]}$ that gives the action
of $Z_{[L,\pi]}$ on the representation $\Pi_x$ corresponding to $\rho_x$ by local Langlands.  (We will say such a
map is {\em compatibile with local Langlands}.)
\end{conjecture}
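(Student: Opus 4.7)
The plan is to prove Conjecture~\ref{conj:weak} by induction on $n$, interleaved with the Strong Conjecture as foreshadowed in the introduction. The base case $n=1$ is essentially local class field theory: for $G = F^{\times}$ the Bernstein center of each block is a completion of a group algebra of $F^{\times}/U$ for a suitable open subgroup $U$, and local reciprocity matches this with $R_{\nu}$ for the corresponding one-dimensional $\nu$.

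First I would reduce to the case where $(L,\pi)$ is simple, using the tensor decomposition $Z_{[L,\pi]} \cong \bigotimes_i Z_{[L^i,\pi^i]}$ of Theorem 12.4 of~\cite{bernstein1} together with the tensor factorization $R_{\nu} = \bigotimes_{[\taubar]} R_{q_{\taubar},n_{\taubar}}$, and the compatibility of local Langlands with parabolic induction. For a simple pair with $\pi_m = (\pi')^{\otimes m}$, I would induct on $m$ through the sequence $\{1, e_{q^{f'}}, \ell e_{q^{f'}}, \dots\}$. Assuming a compatible map $Z_{m'} \to R_{\nu'}$ is available for the immediate predecessor $m'$ of $m$, one obtains a map from $Z_m$ to $R_{\nu'}^{\otimes j}$ by composing $\Ind_{m',m}$ with the inductive map tensored with itself $j$ times; a comparison of $\overline{\CK}$-points via local Langlands then shows this composite factors through the natural inclusion $R_{\nu} \hookrightarrow R_{\nu'}^{\otimes j}$, producing the desired map on the image of $\Ind_{m',m}$. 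The kernel of $\Ind_{m',m}$ is contained in $\overline{A}_{q^{f'},m,1}[\Theta_{m,m}^{\pm 1}]$ by the results of Section~\ref{sec:affine}, and a candidate image in $R_{\nu}^{\inv} \subseteq R_{\nu}$ is constructed from Theorem~\ref{thm:R to A} by sending $\Theta_{m,m}$ to $\det\Fr$ and the inertia-supported part to the subalgebra of $R_{\nu}^{\inv}$ of functions depending only on $\rho|_{I_F}$.

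The principal obstacle will be verifying that these two partial constructions assemble into a single ring homomorphism $Z_m \to R_{\nu}$, and that the resulting map is compatible with local Langlands at every $\overline{\CK}$-point rather than merely generically. This requires a presentation of $Z_m$ precise enough to control the interaction of its cuspidal and non-cuspidal pieces, which is exactly the content of a Strong-Conjecture-type statement at the previous inductive level. Hence the Weak Conjecture at $m$ must be proved together with the Strong Conjecture at $m-1$, as is carried out in the companion paper~\cite{converse}; the inductive loop is then closed by the main result of the present paper, which shows that the Weak Conjecture for all $m' \leq m$ implies the Strong Conjecture in the same range.
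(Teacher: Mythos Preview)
This statement is a \emph{conjecture} in the present paper; the paper does not prove it, but rather establishes only the characteristic-zero version (Theorem~\ref{thm:invert ell}) and the implication ``Weak for $m\le n$ $\Rightarrow$ Strong for $n$'' (Theorem~\ref{thm:main}).  The actual proof of Conjecture~\ref{conj:weak} is deferred to the companion paper~\cite{converse}, so your proposal should be compared with that argument rather than with anything here.

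You have correctly identified the global inductive architecture (base case $n=1$, reduction to simple blocks, interleaving Weak and Strong).  However, your proposed inductive step contains a genuine gap.  You write that the composite $Z_m \to Z_{m'}^{\otimes j} \to R_{\nu'}^{\otimes j}$ ``factors through the natural inclusion $R_{\nu} \hookrightarrow R_{\nu'}^{\otimes j}$''.  No such inclusion exists: the block-diagonal construction gives a ring map $R_{\nu} \to R_{\nu'}^{\otimes j}$, but it is \emph{not} injective (its kernel contains, for instance, all off-block-diagonal matrix entries of $\Fr$), and even on $\GL$-invariants the corresponding map $A_{F_{\taubar},m,1}\to A_{F_{\taubar},m',1}^{\otimes j}$ has nontrivial kernel (this is precisely the kernel of $\Ind_{\nu}$ analyzed in Section~\ref{sec:affine}).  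So factoring through this map cannot by itself produce a well-defined homomorphism $Z_m \to R_{\nu}$; at best it pins down the image of $Z_m$ modulo that kernel, and your attempt to supply the missing piece via $\overline{A}_{q^{f'},m,1}[\Theta_{m,m}^{\pm 1}]$ presupposes exactly the compatibility you are trying to prove.

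The approach actually taken in~\cite{converse} is quite different and does not proceed via parabolic induction in this way.  It uses the theory of Rankin--Selberg gamma factors in families: assuming the Strong Conjecture for $\GL_{n-1}$, one has enough control over $Z_{n-1}$ to construct, for each $m<n$, gamma-factor elements in $R_{\nu}$ that match the corresponding elements of $Z_n$, and a converse theorem then shows these generate enough of $Z_n$ to force the desired map $Z_n\to R_{\nu}$ to exist integrally.  This bypasses the failure of injectivity above by working with invariants of pairs rather than trying to lift through a non-injective restriction map.
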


Since $R_{\nu}$ is reduced and $\ell$-torsion free, such a map is unique if it exists.  Note also that the image
of any element of $Z_{[L,\pi]}$ under such a map is invariant under the action of ${\mathcal G}_{\nu}$, and so any such map
must factor through the subalgebra $R_{\nu}^{\inv}$ of ${\mathcal G}_{\nu}$-invariant elements of $R_{\nu}$.  We further conjecture:

\begin{conjecture}[Strong local Langlands in families] \label{conj:strong}
There is an isomorphism $Z_{[L,\pi]} \cong R_{\nu}^{\inv}$ such that the composition
$$Z_{[L,\pi]} \rightarrow R_{\nu}^{\inv} \rightarrow R_{\nu}$$
is compatible with local Langlands.
\end{conjecture}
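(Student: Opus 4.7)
The plan is to deduce the Strong Conjecture from the Weak Conjecture by verifying, simultaneously for the families $B_{F,m} = Z_m$ and $B_{F,m} = R_\nu^{\inv}$, the axiomatic setup of Section~\ref{sec:maps}, and then reading off the isomorphism $Z_{[L,\pi]} \risom R_\nu^{\inv}$ from the resulting identifications with $A_{F_{\taubar},m,1}$. I would first reduce to the case of a simple pair $(L,\pi) = (L_m,\pi_m)$: on the automorphic side this is the tensor product decomposition of Theorem 13.4 of~\cite{bernstein1}, and on the Galois side $R_\nu$ already factors across $W_F$-conjugacy classes of irreducible constituents $\taubar$ of $\nu$. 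In the simple case $\nu$ is a single $W_F$-orbit, so $R_\nu \cong R_{q_{\taubar},n_{\taubar}}$, and Theorem~\ref{thm:R to A} supplies a canonical injection $R_\nu^{\inv} \hookrightarrow A_{F_{\taubar},m,1}$ with the right behaviour on $\overline{\CK}$-points.

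Given the Weak Conjecture for $\GL_m(F)$ with $m \le n$, I have compatible maps $Z_m \to R_\nu$. Because their images are ${\mathcal G}_\nu$-invariant, they factor as $Z_m \to R_\nu^{\inv}$, giving the chain $Z_m \to R_\nu^{\inv} \to A_{F_{\taubar},m,1}$. The bulk of the axioms of Section~\ref{sec:maps} for the family $\{Z_m\}$ are already established in Section~\ref{sec:bernstein}: reducedness and $\ell$-flatness; the bijection on $\overline{\CK}$-points via Corollary~\ref{cor:bernstein points} and Vigneras' semisimple mod $\ell$ correspondence; the induction maps $\Ind_{\nu^{\max}}$ and $\Ind_{m',m}$ together with their $\overline{\CK}$-saturation (Proposition~\ref{prop:CK saturation}); the embedding of $\overline{A}_{q^{f'},m,1}[T^{\pm 1}]$ coming from the Hecke algebra description of $E_m$; and axiom~(5) is precisely Theorem 13.6 of~\cite{bernstein1}.

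For the family $\{R_\nu^{\inv}\}$ the axioms must be harvested from the deformation theory of Sections~\ref{sec:tame}--\ref{sec:global galois}. Reducedness, $\ell$-flatness and finite generation descend from the corresponding properties of $R_{q,n}$ together with invariant theory for the reductive group ${\mathcal G}_\nu$ (combining the generic-fibre statement with $\overline{\CK}$-saturation). The bijection on $\overline{\CK}$-points follows from Proposition~\ref{prop:orbits}, which puts a semisimple point in every $\GL_n$-orbit, together with Corollary~\ref{cor:points}. Induction maps on the Galois side come from the direct-sum operation on Weil-group representations: a partition gives an embedding of deformation spaces that descends to ${\mathcal G}_\nu$-invariants, and Proposition~\ref{prop:tame constant inertia} combined with the identification of the ``inertially constant'' subring of $R_\nu^{\inv}$ with (a tensor factor of) $\overline{A}_{q^{f'},m,1}$ supplies the map from $\overline{A}_{q^{f'},m,1}[T^{\pm 1}]$, with $T$ mapping to $\det \Fr$ as in Theorem~\ref{thm:R to A}. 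Compatibility of the composition $Z_m \to R_\nu^{\inv} \to A_{F_{\taubar},m,1}$ with the Bernstein-center map $f_n^Z$ is forced by agreement on $\overline{\CK}$-points plus reducedness.

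The main obstacle is verifying axiom~(5), the ``largeness'' of the image of $\Ind_{m',m}$, for $R_\nu^{\inv}$. For $Z_m$ this is the explicit Hecke-algebra computation of Theorem 13.6 of~\cite{bernstein1}; for $R_\nu^{\inv}$ no such direct model is available. My intended approach is to transfer largeness from $Z_m$ to $R_\nu^{\inv}$: assuming axioms~(1)--(4) and~(6) for both families, the kernel of each $\Ind_{m',m}$ is controlled by $\overline{A}_{q^{f'},n,1}[Q_n^{\pm 1}]$ in the same way on both sides (the saturation result at the end of Section~\ref{sec:affine} being the crucial ingredient), so any $y \in R_\nu^{\inv, \otimes j}$ with $\ell^a y$ in the image can be chased back through $Z_m$ using the known largeness there, corrected by an element $\phi_n(x)$ with $x \in \overline{A}_{q^{f'},n,1}[T^{\pm 1}]$. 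Once all six axioms are verified for both families, the proposition of Section~\ref{sec:maps} yields isomorphisms $Z_m \risom A_{F_{\taubar},m,1}$ and $R_\nu^{\inv} \risom A_{F_{\taubar},m,1}$; commutativity of the triangle forces the Weak-Conjecture map $Z_{[L,\pi]} \to R_\nu^{\inv}$ to be an isomorphism, which is the Strong Conjecture.
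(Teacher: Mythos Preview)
Your overall architecture is right---reduce to simple blocks, compose the Weak-Conjecture map with the injection of Theorem~\ref{thm:R to A}, and invoke the machinery of Section~\ref{sec:maps}---but you are doing twice the work needed, and the extra half contains a circularity.

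The paper does \emph{not} verify the axioms of Section~\ref{sec:maps} for the family $\{R_{\nu}^{\inv}\}$; it only verifies them for $\{Z_m\}$, with $f_m$ taken to be the \emph{composition} $Z_m \to R_{q_{\taubar},m}^{\inv} \to A_{F_{\taubar},m,1}$. Once that composition is shown to be an isomorphism, the conclusion for $R_{\nu}^{\inv}$ is immediate from pure algebra: the second map $R_{q_{\taubar},n}^{\inv} \to A_{F_{\taubar},n,1}$ is already known to be injective (Theorem~\ref{thm:R to A}), and if $g\circ f$ is an isomorphism with $g$ injective then both $f$ and $g$ are isomorphisms. So there is no need to establish any of axioms~(1)--(6) for $R_{\nu}^{\inv}$ directly.

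Your proposed verification of axiom~(5) for $R_{\nu}^{\inv}$ is where the circularity bites. You want to ``chase back through $Z_m$'' an element $y \in (R_{\nu}^{\inv})^{\otimes j}$, but to do this you would need the map $Z_{m'}^{\otimes j} \to (R_{\nu}^{\inv})^{\otimes j}$ to be surjective (or at least to hit $y$), and surjectivity of $Z_m \to R_{\nu}^{\inv}$ is exactly the Strong Conjecture you are trying to prove. There is no independent Hecke-algebra model of $R_{\nu}^{\inv}$ to fall back on, which is precisely why the paper avoids this route.

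Two smaller points on the $\{Z_m\}$ side: condition~(3) requires checking that $Z_1 \to A_{F_{\taubar},1,1}$ is an isomorphism, which you do not mention; and condition~(4) is subtler than you indicate. The Hecke-algebra embedding $\overline{A}_{q^{f'},m,1}[T^{\pm 1}] \to Z_m$ is not a priori known to land on $\overline{A}_{q_{\taubar},m,1}$ and $Q_m$ after composing with $f_m$. The paper handles this by first showing the image of $\overline{A}_{q^{f'},m,1}$ lands in $\overline{A}_{q_{\taubar},m,1}$ (via Proposition~\ref{prop:finite bernstein image} and Theorem~\ref{thm:R to A}), then invoking a short lemma that any injective $W(k)$-endomorphism of a finite reduced $\ell$-torsion free algebra is an automorphism, and finally adjusting $T \mapsto \Theta_{m,m}$ to a new element $\Theta'$ hitting $Q_m$ exactly.
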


If one completes at a maximal ideal of $R_{\nu}$, corresponding to a representation $\rhobar$ of $W_F$ over $k$,
and uses Lemma~\ref{lem:compare invariants} to relate the
invariant elements of $R_{\rhobar}^{\Box}$ and $R_{\rhobar}^{\diamond}$, one recovers Conjectures 7.5 and 7.6
of~\cite{bernstein3}.  In particular (c.f. Theorem 7.9 of~\cite{bernstein3}), Conjecture~\ref{conj:weak} above implies
the ``local Langlands in families'' conjecture of Emerton-Helm (conjecture 1.1.3 of~\cite{emerton-helm}).

These conjectures should be viewed as relating ``congruences'' between admissible representations (which are in some
sense encoded in the structure of $Z_{[L,\pi]}$) with ``congruences'' between representations of $W_F$ (encoded in
$R_{\nu}$).  Since inverting $\ell$ destroys information about such congruences, one expects such conjectures to be relatively
straightforward with $\ell$ inverted.  We will show that this is indeed the case.

First, note that any map:
$$Z_{[L,\pi]} \otimes \overline{\CK} \rightarrow R_{\nu} \otimes \overline{\CK}$$
that is compatible with local Langlands is Galois equivariant, and hence descends to a map
$$Z_{[L,\pi]}[\frac{1}{\ell}] \rightarrow R_{\nu}[\frac{1}{\ell}]$$
compatible with local Langlands.  It thus suffices to show:

\begin{theorem} \label{thm:invert ell}
There is a map $Z_{[L,\pi]} \otimes \overline{\CK} \rightarrow R_{\nu} \otimes \overline{\CK}$
compatible with local Langlands (and therefore a corresponding map over $\CK$.)
Moreover, the image of this map is $R_{\nu}^{\inv} \otimes \overline{\CK}$.
\end{theorem}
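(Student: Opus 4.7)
The plan is to exploit the explicit characteristic-zero structure of both sides. By Theorem~\ref{thm:bernstein char 0},
$$Z_{[L,\pi]} \otimes \overline{\CK} \cong \prod_{(M,\tpi)} \tZ_{(M,\tpi)}$$
decomposes as a finite product of Bernstein-Deligne factors indexed by inertial equivalence classes $(M,\tpi)$ over $\overline{\CK}$ reducing mod $\ell$ to $(L,\pi)$. The strategy is to produce a matching idempotent decomposition of $R_\nu \otimes \overline{\CK}$ and then construct the map one factor at a time.

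For the decomposition of $R_\nu$, note that for any $\overline{\CK}$-point $x$ of $\Spec R_\nu$, Corollary~\ref{cor:continuity} forces a uniformly bounded power of $\rho_x(\tsigma)$ to be unipotent, so the eigenvalues of $\rho_x(\tsigma)$ all lie in a fixed finite set of $\ell$-power roots of unity. Each coefficient of the characteristic polynomial of $\rho_x(\tsigma)$ is therefore a regular function on $\Spec R_\nu \otimes \overline{\CK}$ whose image is a finite subset of $\overline{\CK}$; by reducedness of $R_\nu$ each such function is locally constant, yielding an open-closed decomposition of $\Spec R_\nu \otimes \overline{\CK}$. Since this characteristic polynomial together with $\nu$ determines $\rho_x|_{I_F}^{\sss}$, and since the inertial class of the supercuspidal support of the local Langlands preimage of $\rho_x^{\sss}$ is in turn determined by this $I_F$-restriction, grouping by inertial class produces
$$R_\nu \otimes \overline{\CK} \cong \prod_{(M,\tpi)} R_\nu^{(M,\tpi)}$$
indexed by the same finite set.

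For each $(M,\tpi)$, write $M = \prod_i \GL_{n_i}(F)$ and $\tpi = \bigotimes \tpi_i$, and let $\rho_i$ denote the Weil representation associated to $\tpi_i$ by local Langlands. On $\Spec R_\nu^{(M,\tpi)}$ the universal semisimplified representation is $\bigoplus_i (\rho_i \otimes \chi_i^{\text{univ}})$ for unramified characters $\chi_i^{\text{univ}}$ determined up to the natural action of $H \times W_M(\tpi)$; the Bernstein-Deligne presentation identifies $\tZ_{(M,\tpi)}$ with the ring of $H \times W_M(\tpi)$-invariant Laurent polynomials in the $\chi_i$. Each such invariant polynomial lifts to an explicit element of $R_\nu^{(M,\tpi)} \otimes \overline{\CK}$ built from the traces $\operatorname{tr}\rho_\nu(\Fr^k)$, and these lifts together define the ring map $\tZ_{(M,\tpi)} \to R_\nu^{(M,\tpi)} \otimes \overline{\CK}$. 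The main obstacle is showing that this extraction of unramified twist parameters from trace data is globally well defined across each stratum, which amounts to the statement that the isotypic decomposition of $\rho_\nu$ is algebraically canonical on $R_\nu^{(M,\tpi)}$. Once this is established, compatibility with local Langlands on $\overline{\CK}$-points is automatic from the construction, and Galois equivariance yields the map over $\CK$.
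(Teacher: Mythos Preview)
Your overall architecture matches the paper's: decompose $Z_{[L,\pi]}\otimes\overline{\CK}$ into Bernstein--Deligne factors $\tZ_{(M,\tpi)}$, produce a matching decomposition of $R_\nu\otimes\overline{\CK}$ indexed by the restriction of $\rho_{\nu,x}^{\sss}$ to $I_F$, and build the map factor by factor. The decomposition of $R_\nu\otimes\overline{\CK}$ you describe is correct and is essentially Proposition~\ref{prop:tame constant inertia}.

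The gap is in the construction of the map on each factor. Two specific problems. First, there is no ``universal semisimplified representation'' over $R_\nu^{(M,\tpi)}$: semisimplification does not vary in families, so the expression $\bigoplus_i(\rho_i\otimes\chi_i^{\rm univ})$ has no meaning as an $R_\nu^{(M,\tpi)}$-linear object. Second, the functions $\tr\rho_\nu(\Fr^k)$ alone cannot separate the contributions of inertially distinct $\tpi_i$. If $\tpi\cong\bigotimes_i\tpi_i^{\otimes r_i}$ with the $\tpi_i$ pairwise inertially inequivalent, the generators of $\tZ_{(M,\tpi)}$ are symmetric functions in each block $X_{i,1},\dots,X_{i,r_i}$ \emph{separately}, and the global trace $\tr\rho_\nu(\Fr^k)$ mixes the blocks together. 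You correctly flag this (``the isotypic decomposition of $\rho_\nu$ is algebraically canonical'') but then leave it as an unproved hypothesis; that is exactly the step that needs to be done.

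The paper's resolution is to bypass semisimplification entirely and instead use the $I_F$-isotypic decomposition of $\rho_\nu$ itself. For each $i$ one fixes an irreducible $I_F$-constituent $\ttau_i$ of $\trho_i$, extends it to its normalizer $W_i\subset W_F$, and forms $\Hom_{I_F}(\ttau_i,\rho_\nu)$. Over $R_\nu^{\trho}$ this is a free module of rank $r_i$ carrying an unramified $W_i$-action, and the coefficients of the characteristic polynomial of a Frobenius of $W_i$ on this module are honest elements of $R_\nu^{\trho}$. Sending the elementary symmetric functions in $X_{i,1},\dots,X_{i,r_i}$ to these coefficients defines the map and makes compatibility with local Langlands immediate. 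This is the concrete content your outline is missing.
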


To prove this, we first work on the level of connected components.  We have an isomorphism:
$$Z_{[L,\pi]} \otimes \overline{\CK} \cong \prod\limits_{M,\tpi} \tZ_{(M,\tpi)},$$
by Theorem~\ref{thm:bernstein char 0}, where $(M,\tpi)$ varies over the inertial equivalence classes of pairs
that reduce modulo $\ell$ to $(L,\pi)$.  Thus the connected components of $\Spec Z_{[L,\pi]} \otimes \overline{\CK}$
are in bijection with the pairs $(M,\tpi)$.  Via local Langlands, these correspond to representations of $I_F$.
More precisely, let $\Pi$ be an admissible representation of $G$, let $\rho: W_F \rightarrow \GL_n(\overline{\CK})$
correspond to $\Pi$ via local Langlands, and let $\trho: W_F \rightarrow \GL_n(\overline{\CK})$ be the
representation of $W_F$ corresponding to $\tpi$ via local Langlands.  Then $\Pi$
belongs to the block corresponding to $(M,\tpi)$ if and only if the restriction of $\rho^{\sss}$ to $I_F$ coincides
with the restriction of $\trho$ to $I_F$.

On the other hand, it is an easy consequence of Proposition~\ref{prop:tame constant inertia} that as $x$ varies over
$\overline{\CK}$-points of $\Spec R_{\nu}$, the restriction of $\rho_{\nu,x}^{\sss}$ to $I_F$ is constant on
connected components of $\Spec R_{\nu} \otimes \overline{\CK}$.  We can thus let $R_{\nu}^{\trho}$ be the direct factor
of $R_{\nu} \otimes \overline{\CK}$ corresponding to the union of the connected components of $\Spec R_{\nu} \otimes \overline{\CK}$
on which the restriction of $\rho_{\nu,x}^{\sss}$ to $I_F$ is isomorphic to the restriction of $\trho$ to $I_F$.  We will see
later that $\Spec R_{\nu}^{\trho}$ is in fact connected.

It then suffices to construct, for each $(M,\tpi)$, an isomorphism:
$$\tZ_{(M,\tpi)} \rightarrow (R_{\nu}^{\trho})^{\inv}$$
compatible with local Langlands.  Since $(M,\tpi)$ is only well-defined up to inertial equivalence, we may assume that
$\tpi$ has the form:
$$\tpi \cong \bigotimes_i \tpi_i^{\otimes r_i},$$
where the $\tpi_i$ are pairwise inertially inequivalent representations of $\GL_{n_i}(F)$.  Unwinding the Bernstein-Deligne
description of $\tZ_{(M,\tpi)}$, we obtain an isomorphism:
$$\tZ_{(M,\tpi)} \cong \bigotimes_i \overline{\CK}[X_{i,1}^{\pm 1}, \dots, X_{i,r_i}^{\pm 1}]^{S_{r_i}},$$
where the symmetric group $S_{r_i}$ acts by permuting the elements $X_{i,1}, \dots, X_{i,r_i}$.

For each $i$, and any $\alpha \in \overline{\CK}$, let $\chi_{i,\alpha}$ denote the unramified character of $\GL_{n_i}(F)$
that takes the value $\alpha$ on any element of $\GL_{n_i}(F)$ with determinant $\unif_F$.
An irreducible $\Pi$ in $\Rep_{\overline{\CK}}(M,\tpi)$ has supercuspidal support $(M,\tpi')$ for some $\tpi'$ of the form:
$$\tpi' \cong \bigotimes_i \bigotimes\limits_{j=1}^{r_i} \tpi_i \otimes \chi_{i,\alpha_{i,j}}$$
for suitable $\alpha_{i,j}$.  Then the $d$th elementary symmetric function in $X_{i,1}, \dots, X_{i,r_i}$, considered as an element
of $\tZ_{(M,\tpi)}$, acts on $\Pi$ via the $d$th elementary symmetric function in the $\alpha_{i,1}^{f'_i}, \dots, \alpha_{i,r_i}^{f'_i}$,
where $f'_i$ is the order of the group of unramified characters $\chi$ such that $\tpi_i \otimes \chi$ is isomorphic to $\tpi_i$.

For each $i$, the irreducible representation $\trho_i$ of $W_F$ corresponding to $\tpi_i$ via local Langlands decomposes, when
restricted to $I_F$, as a direct sum of distinct irreducible representations of $I_F$, all of which are $W_F$-conjugate.  Fix an irreducible
representation $\ttau_i$ of $I_F$ contained in $\trho_i$, and let $W_i$ be the normalizer of $\ttau_i$ in $W_F$.  Then there is a
unique way of extending $\ttau_i$ to a representation of $W_i$ such that the induction of the resulting extension to $W_F$ is isomorphic
to $\trho_i$.  (Note that this implies that $W_i$ has index $f'_i$ in $W_F$.)

This choice of extension of $\ttau_i$ to $W_i$ gives rise to an action of $W_i$ on the space $\Hom_{I_F}(\ttau_i, \rho_{\nu})$.
The quotient of this space that lives over $R_{\nu}^{\trho}$ is a free $R_{\nu}^{\trho}$-module of rank $r_i$, with an unramified
action of $W_i$.

Let $\tFr_i$ be a Frobenius element of $W_i$, and let $P_i(x) = \sum\limits_{j = 0}^{r_i} a_{i,j} X^j$ be
the characteristic polyomial of $\tFr_i$ on $\Hom_{I_F}(\ttau_i,\rho_{\nu})$ (over $R_{\nu}^{\trho}$).
Consider the map $\tZ_{(M,\tpi)} \rightarrow R_{\nu}^{\trho}$ that sends the
$d$th elementary symmetric function in $X_{i,1}, \dots, X_{i,r_i}$ to the element $(-1)^d a_{i,r_i - d}$
of $R_{\nu}^{\trho}$.  One verifies easily that this map is compatible with local Langlands.

It remains to show that $(R_{\nu}^{\trho})^{\inv}$ is generated by the images of these elements.  Given a polynomial
$P_i$ of degree $r_i$, with coefficients in a ring $R$, we can associate to it the unramified $R$-representation
$M_i(P_i)$ of $W_i$ on which $\tFr_i$ acts via the companion matrix of $P_i$.  The representation $\rho(\{P_i\})$
given by:
$$\rho(\{P_i\}) = \bigoplus_i \Ind_{W_i}^{W_F} M_i(P_i) \otimes \ttau_i$$
is then an $R$-point of $\Spec R_{\nu}^{\trho}$.  In this way we obtain a natural map:
$$R_{\nu}^{\trho} \rightarrow \bigotimes_i \overline{\CK}[Y_{i,1}, \dots, Y_{i,r_i}]$$
that in particular takes the element $(-1)^d a_{i,r_i-d}$ of $R_{\nu}^{\trho}$ to $Y_{i,d}$.
On the other hand, it is easy to see that for every $y$ in $(\Spec R_{\nu}^{\trho})(\overline{\CK})$, there is a point
$x$ in $(\Spec R_{\nu}^{\trho})(\overline{\CK})$ arising from a collection of polynomials $\{P_i(x)\}$ such that $y$
is in the closure of the $G_{\nu}$-orbit of $x$.  It follows that the map:
$$R_{\nu}^{\trho} \rightarrow \bigotimes_i \overline{\CK}[Y_{i,1}, \dots, Y_{i,r_i}]$$
is injective on $(R_{\nu}^{\trho})^{\inv}$.  Therefore $(R_{\nu})^{\trho})^{\inv}$ is generated by the elements $a_{i,r_{i-d}}$,
completing the proof.
 
It is not hard to go slightly further, and show:
\begin{theorem} \label{thm:normalize}
The image of $Z_{[L,\pi]}$ in $R_{\nu}[\frac{1}{\ell}]$ under the map of Theorem~\ref{thm:invert ell} lies in the normalization
of $R_{\nu}$.
\end{theorem}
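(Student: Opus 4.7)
The plan is to show that the image of each natural generator of $\tZ_{(M,\tpi)}$ under the map $Z_{[L,\pi]} \otimes \overline{\CK} \to R_\nu \otimes \overline{\CK}$ of Theorem~\ref{thm:invert ell} is integral over $R_\nu$. Since the set of integral elements is a subring and since the map descends Galois-equivariantly to $Z_{[L,\pi]} \to R_\nu[\frac{1}{\ell}]$, the entire image will then lie in the normalization. By the explicit construction in the proof of Theorem~\ref{thm:invert ell}, it is enough to show that each coefficient $a_j$ of the characteristic polynomial $P_i(X)$ of $\Fr_i$ acting on the rank-$r_i$ free $R_\nu^{\trho}$-module $V_i := \Hom_{I_F}(\ttau_i, \rho_\nu)$ is integral over $R_\nu$.

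To produce an integral witness I would introduce the auxiliary module $M := \Hom_{I_F^{(\ell)}}(\taubar, \rho_\nu)$, where $\taubar$ is an irreducible $I_F^{(\ell)}$-constituent of $\ttau_i|_{I_F^{(\ell)}}$. Because $\rho_\nu|_{I_F^{(\ell)}}$ is isomorphic to the fixed type $\nu$, the module $M$ is free over $R_\nu$ of rank equal to the multiplicity of $\taubar$ in $\nu$, and by fixing the extension $\tau$ of $\taubar$ to $G_\taubar$ supplied by Proposition~\ref{prop:CHT}, the normalizer $G_\taubar \supseteq W_i$ acts on $M$ by $R_\nu$-linear endomorphisms. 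Consequently $\Fr_i$ acts on $M$ with characteristic polynomial monic in $R_\nu[X]$, so every eigenvalue of $\Fr_i$ on $M$ (in an algebraic closure of the total ring of fractions of $R_\nu$) is integral over $R_\nu$.

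Over $R_\nu^{\trho}$, the module $V_i$ embeds into $M \otimes_{R_\nu} R_\nu^{\trho}$ as the eigenspace of $I_F/I_F^{(\ell)}$ for the character $\chi_i$ describing how $\ttau_i$ extends $\taubar$; however, the $\Fr_i$-action on $V_i$ used to define $P_i$ is normalized through the extension of $\ttau_i$ to $W_i$ chosen in the construction of Theorem~\ref{thm:invert ell}, which differs from the restriction of the $M$-action (coming from $\tau|_{W_i}$) by a single scalar $\gamma \in \overline{\CK}^{\times}$. The main obstacle will be to verify that the image of $\gamma$ in $R_\nu \otimes \overline{\CK}$ is in fact a unit in $R_\nu$; I expect this to follow from the fact that $\det \rho_\nu(\Fr)$ is a unit in $R_\nu$, combined with the determinant formulas for induced representations, which allow one to compare how the two chosen extensions contribute to $\det \rho_\nu$ across the various inertia types appearing in $\nu$. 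Granted this, each eigenvalue of $\Fr_i$ on $V_i$ becomes a unit multiple of an eigenvalue of $\Fr_i$ on $M$, hence is integral over $R_\nu$, so each $a_j$ is integral, and the theorem follows.
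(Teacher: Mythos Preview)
Your approach is fundamentally different from the paper's, and the gap you flag is a real one that you have not closed. The scalar $\gamma$ comparing the two $W_i$-normalizations depends on the choice of $\tpi_i$ within its inertial class, and for an arbitrary such choice there is no reason $\gamma$ should be an $\ell$-adic unit; your appeal to determinant formulas for induced representations does not pin this down, because changing $\tpi_i$ by an unramified twist simultaneously rescales the $X_{i,j}$ and shifts $\gamma$, and you would have to track this cancellation precisely to conclude. Until that is done, the argument does not establish integrality of the $a_j$ over $R_\nu$. You also need the inverse of the constant coefficient (coming from the $X_{i,j}^{-1}$ in the Laurent ring) to be integral, which requires not merely that $\gamma$ be integral but that it be a unit; this is a strictly stronger demand.

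The paper sidesteps all of this bookkeeping with a one-line valuative criterion argument: for $z\in Z_{[L,\pi]}$ with image $y\in R_\nu[\tfrac{1}{\ell}]$, one tests integrality by mapping $R_\nu$ to an arbitrary DVR $A$ with fraction field $K$ of characteristic zero. The induced map corresponds to a representation $\rho_A$ defined over $A$, and since $\rho_A\otimes_A K$ admits an $A$-lattice, so does the corresponding admissible representation $\Pi_K$; hence $z$ acts on $\Pi_K$ by an element of $A$, i.e.\ $y$ lands in $A$. As this holds for every such $A$, $y$ lies in the normalization of $R_\nu$. This avoids the explicit generators entirely and uses only that local Langlands preserves integrality, which is where the real content lies.
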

\begin{proof}
Fix an element $x$ of $Z_{[L,\pi]}$, and let $y$ be its image in $R_{\nu}[\frac{1}{\ell}]$.
Let $A$ be a discrete valuation ring that is a $W(k)$-algebra, with field of fractions $K$ of characteristic zero, and fix
a map $R_{\nu} \rightarrow A$.  This corresponds to a pseudo-framed representation $\rho_A$ of $W_F$.  Let $\Pi_K$ denote
the admissible $K$-representation corresponding to $\rho_A \otimes_A K$ via local Langlands.  Since $\rho_A \otimes_A K$ admits
an $A$-lattice, so does $\Pi_K$.  In particular the action of $x$ on $\Pi_K$ is via an element of $A$, so $y$ maps to
an element of $A$ under the map $R_{\nu}[\frac{1}{\ell}] \rightarrow K$.  Since this is true for every $A$ and every map $R_{\nu} \rightarrow A$,
$y$ lives in the normalization of $R_{\nu}$ as claimed.
\end{proof}

\section{Main results} \label{sec:main}
The main objective of this section (and, indeed, the paper) is to show the following:

\begin{theorem} \label{thm:main}
Suppose that Conjecture~\ref{conj:weak} holds for all $\GL_m(F)$, $m \leq n$, and Conjecture~\ref{conj:strong} holds for $m < n$.  Then:
\begin{enumerate}
\item The map $\overline{E}_{q,n}[\frac{1}{\ell}] \rightarrow B_{q,n}[\frac{1}{\ell}]$ of section~\ref{sec:E-B} induces an isomorphism of $\overline{E}_{q,n}$
with $B_{q,n}$, and
\item Conjecture~\ref{conj:strong} holds for $\GL_n(F)$.
\end{enumerate}
\end{theorem}

We begin by proving the first claim, using the weak conjecture for $\GL_n$ in depth zero.  Let $Z_n^0$ be the product of
the depth zero blocks of $\Rep_{W(K)}(G)$.  The weak conjecture then gives rise to a map $Z_n^0 \rightarrow S_{q,n}$
compatible with the local Langlands correspondence.  The subalgebra of $Z_n^0$ consisting of elements that are constant on inertial equivalence classes
is isomorphic to $\overline{E}_{q,n}$, by Proposition~\ref{prop:finite bernstein image}.  By compatibility with local Langlands together with
Proposition~\ref{prop:tame constant inertia} and Proposition~\ref{prop:inertial saturation} the image of $\overline{E}_{q,n}$ in $S_{q,n}$ is
contained in $B_{q,n}$, and the induced map: $\overline{E}_{q,n}[\frac{1}{\ell}] \rightarrow B_{q,n}[\frac{1}{\ell}]$ is the map considered in section~\ref{sec:E-B}.
It thus follows from Corollary~\ref{cor:E-B} that the map $\overline{E}_{q,n} \rightarrow B_{q,n}$ is an isomorphism.

We now turn to the second claim.  Fix a mod $\ell$ supercuspidal inertial equivalence class $[L,\pi]$, corresponding to an $\ell$-inertial type $\nu$,
and note that we have tensor factorizations:
$$Z_{[L,\pi]} \cong \bigotimes_i Z_{[L_i,\pi_i]}$$
$$R_{\nu} \cong \bigotimes_{\taubar} R_{q_{\taubar},n_{\taubar}}$$
where the $[L_i,\pi_i]$ are simple blocks.  The former factorization is compatible with parabolic induction and the latter
arises from the direct sum decomposition:
$$\rho_{\nu} = \bigoplus_{\taubar} \Ind_{W_{\taubar}}^{W_F} \rho_{F_{\taubar},n_{\taubar}} \otimes \tau.$$
Since simple blocks correspond to types $\nu$ with only one $n_{\taubar}$ nonzero, these factorizations are compatible,
in the sense that if we have maps $Z_{[L_i,\pi_i]} \rightarrow R_{\nu_i}$ for each $i$ that are compatible with local Langlands,
then their tensor product gives a map $Z_{[L,\pi]} \rightarrow R_{\nu}$ compatible with local Langlands.  Thus both
Conjecture~\ref{conj:weak} and Conjecture~\ref{conj:strong} reduce to the corresponding conjectures on simple blocks.  We thus henceforth
assume that $[L,\pi]$ is of the form $[L_n,\pi_n]$ with $\pi_n \cong \pi_1^{\otimes n}$ for a supercuspidal representation $\pi_1$. 
Following section~\ref{sec:bernstein} we set $Z_n = Z_{[L_n,\pi_n]}$.  The corresponding $R_{\nu_n}$ is then isomorphic to
$R_{q_{\taubar},n}$ for some fixed $\taubar$.

We first consider the case in which $n$ is not $q_{\taubar}$-relevant.  Let $\nu$ be the maximal $q_{\taubar}$-relevant partition of $n$.
We have a commutative diagram:
$$
\begin{array}{ccc}
Z_n & \rightarrow & R^{\inv}_{q_{\taubar},n}\\
\downarrow & & \downarrow\\
\otimes_i Z_{\nu_i} & \rightarrow & \otimes_i R^{\inv}_{q_{\taubar},\nu_i}
\end{array}
$$
in which the horizontal maps are those arising from the weak conjecture, the left-hand vertical map is $\Ind_{\nu}$, and the right-hand vertical map is
induced by the map $\Spec \otimes_i R_{q_{\taubar},\nu_i} \rightarrow R_{q_{\taubar},n}$ that takes a collection $(\Fr_i,\sigma_i)$ of matrices with
$\Fr_i \sigma_i \Fr_i^{-1} = \sigma_i^{q_{\taubar}}$ to the pair $(\oplus_i \Fr_i, \oplus_i \sigma_i)$.

The horizontal maps are isomorphisms after inverting $\ell$, and our hypotheses imply that the lower horizontal map is an isomorphism integrally.  Moreover
the left-hand vertical map is injective with saturated image by Theorem~\ref{thm:bernstein ind} and the discussion in the paragraph following it.  It follows
immediately that the top horizontal map must also be an isomorphism.

We now assume that $n$ is $q_{\taubar}$-relevant (that is, it lies in $\{1,e_{q_{\taubar}}, \ell e_{q_{\taubar}}, \dots \})$ let $m$ be the largest element of this
set that is strictly less than $n$.  Set $j = \frac{n}{m}$.

We have a subalgebra $\overline{E}_{q^{f'},n,1}$ of $Z_m$ and compatibility with local Langlands shows that $q^{f'} = q_{\taubar}$.  Thus the map
$Z_n \rightarrow R_{q_{\taubar},n}$ induces a map $\overline{E}_{q_{\taubar},n,1} \rightarrow R_{q_{\taubar},n}$.  Reasoning as in the depth zero setting
we see that the image of this map is contained in $B_{q_{\taubar},n,1}$.  It seems likely that the resulting map $\overline{E}_{q_{\taubar},n,1} \rightarrow B_{q_{\taubar},n,1}$
is the one considered in section~\ref{sec:E-B}, but we do not prove this here.  Instead we use the fact that we have shown these two rings to be abstractly isomorphic,
together with the following lemma:

\begin{lemma}
Let $\overline{E}$ be a finite rank, reduced, $\ell$-torsion free $W(k)$-algebra, and let $f: \overline{E} \rightarrow \overline{E}$ be an injection.
Then $f$ is an isomorphism.
\end{lemma}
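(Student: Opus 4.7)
The plan is to tensor with $\CK$, pass to the normalization of $\overline{A}$, and finish with a length computation over the discrete valuation ring $W(k)$.

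Since $\overline{A}$ is finitely generated and $\ell$-torsion free over the DVR $W(k)$, it is free of some finite rank $n$. The base change $\overline{A} \otimes_{W(k)} \CK$ is then a reduced, $n$-dimensional commutative $\CK$-algebra, hence a finite product $K_1 \times \cdots \times K_r$ of finite field extensions of $\CK$. The map $f \otimes \CK$ is an injective $\CK$-linear endomorphism of an $n$-dimensional $\CK$-vector space, so it is automatically an isomorphism, i.e.\ a $\CK$-algebra automorphism of $K_1 \times \cdots \times K_r$.

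Next, let $\widetilde{A} = \OO_1 \times \cdots \times \OO_r$, where $\OO_i$ is the integral closure of $W(k)$ in $K_i$. This is the integral closure of $\overline{A}$ inside $\overline{A} \otimes \CK$, and it is a free $W(k)$-module of the same rank $n$ containing $\overline{A}$ as a full sublattice (so $\widetilde{A}/\overline{A}$ has finite $W(k)$-length). Because $f \otimes \CK$ is a $\CK$-algebra automorphism of $\overline{A} \otimes \CK$, it carries elements integral over $W(k)$ to elements integral over $W(k)$, and hence restricts to a ring automorphism $\tilde{f}$ of $\widetilde{A}$ extending $f$.

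Finally, let $C$ denote the cokernel of $f\colon \overline{A} \to \overline{A}$. Since $f \otimes \CK$ is an isomorphism, $C$ is a torsion $W(k)$-module of finite length. The chain of inclusions $f(\overline{A}) \subseteq \overline{A} \subseteq \widetilde{A}$ gives
\[
\ell_{W(k)}\bigl(\widetilde{A}/f(\overline{A})\bigr) = \ell_{W(k)}(C) + \ell_{W(k)}\bigl(\widetilde{A}/\overline{A}\bigr).
\]
On the other hand, $\tilde{f}$ is an automorphism of $\widetilde{A}$ carrying the sublattice $\overline{A}$ onto $f(\overline{A})$, so it induces a $W(k)$-linear isomorphism $\widetilde{A}/\overline{A} \cong \widetilde{A}/f(\overline{A})$. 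Comparing lengths forces $\ell_{W(k)}(C)=0$, whence $C=0$ and $f$ is surjective.

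The only step that requires genuine care is the extension of $f$ to the normalization: this uses crucially that $f$ is a ring map (not merely $W(k)$-linear), so that $f \otimes \CK$ preserves the subring of elements integral over $W(k)$. Everything else reduces to routine linear algebra and length bookkeeping over a discrete valuation ring.
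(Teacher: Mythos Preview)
Your proof is correct, but it takes a different route from the paper's. Both arguments begin by observing that $\overline{A}\otimes\CK$ is a finite product of finite field extensions $K_1\times\cdots\times K_r$ of $\CK$ and that $f\otimes\CK$ is an automorphism of this product. From there the paper simply notes that the $\CK$-algebra automorphism group of such a product is finite (a finite permutation of isomorphic factors composed with field automorphisms of finite extensions), so some power $f^N$ is the identity on $\overline{A}\otimes\CK$; since $\overline{A}$ is $\ell$-torsion free it embeds in $\overline{A}\otimes\CK$, hence $f^N=\mathrm{id}$ on $\overline{A}$ and $f^{N-1}$ is an inverse. Your argument instead passes to the normalization $\widetilde{A}=\prod_i\OO_i$, observes that any $\CK$-algebra automorphism of $\prod_i K_i$ (and its inverse) preserves integral elements and therefore restricts to an automorphism of $\widetilde{A}$, and then finishes with a clean length comparison. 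The paper's argument is a one-liner once one knows the automorphism group is finite; your approach avoids that fact entirely and is purely module-theoretic, which is arguably more robust if one were in a setting where the residue fields need not be finite over the base. One small point worth making explicit in your write-up: the reason $\tilde f$ is an \emph{automorphism} of $\widetilde{A}$ (and not just an injective endomorphism) is that the same integrality argument applies to $(f\otimes\CK)^{-1}$, forcing $\tilde f(\widetilde{A})=\widetilde{A}$.
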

\begin{proof}
Clearly $f$ is an isomorphism after inverting $\ell$.  On the other hand, the hypotheses guarantee that $\overline{E}[\frac{1}{\ell}]$ is a product
of finite extensions of $\CK$, and $f$ is a $\CK$-linear automorphism of this product.  In particular there is some power of $f$ that is the identity.
\end{proof}

We thus conclude that the map $Z_n \rightarrow R_{q_{\taubar},n}$ coming from the weak conjecture induces an isomorphism
of $\overline{E}_{q_{\taubar},n,1}$ with $B_{q_{\taubar},n,1}$.

Now consider the commutative diagram:
$$
\begin{array}{ccc}
K & \rightarrow & K'\\
\downarrow & & \downarrow\\
Z_n & \rightarrow & R_{q_{\taubar},n}^{\inv}\\
\downarrow & & \downarrow\\
Z_m^{\otimes j} & \rightarrow & (R_{q_{\taubar},m}^{\inv})^{\otimes j}\\
\end{array}
$$
in which the horizontal maps are induced by the weak conjecture, the lower left vertical map is $\Ind_{m,n}$, the lower right vertical map
is the one taking a collection of pairs $(\Fr_i,\sigma_i)$ to their direct sum, and $K$ and $K'$ are the kernels of the lower left and lower right
vertical maps, respectively.  As in the previous case, all horizontal maps become isomorphisms after inverting $\ell$ and the bottom horizontal map
is an isomorphism integrally.

By Proposition~\ref{prop:kernel} $K$ is contained in the subalgebra $\overline{E}_{q_{\taubar},n,1}[\Theta_{n,n}^{\pm 1}]$ of $Z_n$, and the image
of this subalgebra in $R_{q_{\taubar},n}$ is saturated.  It follows that the map from $K$ to $K'$ is an isomorphism: if $x$ is an element of $K'$,
then for some $a$, the product $\ell^a x$ is in the image of $K$.  But then $\ell^a x$ is in 
the image of $\overline{E}_{q_{\taubar},n,1}[\Theta_{n,n}^{\pm 1}]$, so $x$ is as well.  On the other hand, the image of $K$ in
$\overline{E}_{q_{\taubar},n,1}$ is saturated (as $K$ is the kernel of a map of rings that have no $\ell$-torsion), so $x$ must lie in the image
of $K$.

Let $r$ be an element of $R_{q_{\taubar},n}^{\inv}$, and let $r'$ be its image in $(R_{q_{\taubar},m}^{\inv})^{\otimes j}$.  There is then
an element $y$ of $Z_m^{\otimes j}$ whose image under the bottom horizontal map is $r'$.  Since the map $Z_n \rightarrow R_{q_{\taubar},n}^{\inv}$ is
an isomorphism after inverting $\ell$, there exists $a$ such that $\ell^a y$ is in the image of $\Ind_{m,n}$.  

By Theorem~\ref{thm:near saturation}, there exist $\tilde y$ in $Z_n$ and $x$ in $\overline{E}_{q_{\taubar},n,1}[\Theta_{n,n}^{\pm 1}]$ such that
$\Ind_{m,n}(x) = \ell^b(\Ind_{m,n}(\tilde y) - y)$.  Let $s$ be the image of $\tilde y$ in $R_{q_{\taubar},n}^{\inv}$.  The image
of $\ell^b(s-r)$ in $(R_{q_{\taubar},m}^{\inv})^{\otimes j}$ coincides with the image of $\Ind_{m,n}(x)$.  Thus $\ell^b(s-r)$ lies in
the image of $\overline{E}_{q_{\taubar},n,1}[\Theta_{n,n}^{\pm 1}]$.  Since this image is saturated, the element $s-r$ also lives in this image.
Thus the map $Z_n \rightarrow R_{q_{\taubar},n}^{\inv}$ is surjective, so it is an isomorphism.

We have thus completed the proof of Theorem~\ref{thm:main}.  In~\cite{converse} we show that the strong conjecture for $\GL_{n-1}$
implies the weak conjecture for $\GL_n$.  Together with Theorem~\ref{thm:main} and the fact that the strong conjecutre for $\GL_1$
is an easy consequence of local class field theory, we obtain an unconditional proof both of the strong conjecture, and of the existence of an
isomorphism $\overline{E}_{q,n} \cong B_{q,n}$.  We refer the reader to the final section of~\cite{converse} for the details.

\begin{remark} \rm The isomorphism of $\overline{E}_{q,n}$ with $B_{q,n}$ is an interesting result in finite group theory in its own right.  We are aware of
no proof other than the one presented here; it is an interesting question to find a purely group-theoretic proof of this result.
\end{remark}

\section{Affine Curtis Homomorphisms}

Having established both Conjectures~\ref{conj:weak} and~\ref{conj:strong} we now turn to an interesting consequence of Conjecture~\ref{conj:weak}.
Fix a $w$ in $S_n$ (which we identify with the Weyl group of $\mathcal G$).  The conjugacy class of $w$ gives rise to a conjugacy class of
nonsplit, unramified tori in $\mathcal G$; we let $\CT_w$ denote a representative of this conjugacy class.  In particular we have
$\CT_w \cong \prod_{w_i} \Res_{F_i/F} {\mathbb G}_m$, where the product is over the cycles $w_i$ of $w$ and $F_i/F$ is unramified of degree equal to the length of $w_i$.
Let $d$ be the order of $w$ in $S_n$.

Let $X$ be the character group of $\CT_w$, and let $\CT_w^L$ denote the algebraic group $\Hom(X',{\mathbb G}_m) \rtimes \ZZ/d\ZZ$ (regarded as an algebraic group over $W(k)$),
where the action of $1 \in \ZZ/d\ZZ$ on $X'$ is via $w^{-1}$.  Then $\CT_w^L$ is the $L$-group of $\CT_w$.  Moreover, if we identify $\GL_n$ (over $W(k)$) with the $L$-group
of ${\mathcal G}$ in such a way that $X'$ becomes identified with the character group of the diagonal torus in $\GL_n$, then we have a natural $L$-homomorphism from
$\CT_w^L$ to $\GL_n$ that takes $\Hom(X',{\mathbb G}_m)$ to the diagonal torus and takes $1 \in \ZZ/n\ZZ$ to $w^{-1}$.  This allows us to transfer a Langlands parameter
$\rho_w: W_F \rightarrow \CT_w^L(\overline{\CK})$ for $\CT_w$ to a Langlands parameter $\rho: W_F \rightarrow \GL_n(\overline{\CK})$ for ${\mathcal G}$.

It will be useful to understand the interaction between this transfer and the block decompositions for $\Rep_{W(k)}(T_w)$ and $\Rep_{W(k)}(G)$.  Note that 
$$T_w = \CT_w(F) = \Hom(X,(F^{\ur})^{\times})^{\tFr},$$
where $\tFr$ is a fixed Frobenius element of $W_F$, and its action on $X$ is via $w$.  Let $T_w^{(\ell)}$ denote the subgroup $\Hom(X,(\OO_{F^{\ur}}^{\times})^{(\ell)})^{\tFr}$ of $T_w$,
where $(\OO_{F^{\ur}}^{\times})^{(\ell)}$ denotes the elements of pro-order prime to $\ell$ in $\OO_{F^{\ur}}^{\times}$.  Then $T_w^{(\ell)}$ is profinite, of pro-order prime to $\ell$,
and the quotient $T_w/T_w^{(\ell)}$ is a discrete group.  Indeed, explicitly, one has:
$$T_w/T_w^{(\ell)} \cong \prod_{w_i} F_i^{\times}/(\OO_{F_i}^{\times})^{(\ell)} \cong \prod_{w_i} (\ZZ\cdot \unif \times \FF_{q^i}^{\times}),$$
where $\unif$ is a uniformizer of $F$ (hence also of $F_i$.)

The blocks of $\Rep_{W(k)}(T_w)$ are thus given by characters $\chi^{(\ell)}: T_w^{(\ell)} \rightarrow W(k)^{\times}$.  Choose an extension $\chi$ of $\chi^{(\ell)}$ to a character 
$T_w \rightarrow W(k)^{\times}$.  Then ``twisting by $\chi$'' induces an equivalence of categories between the block of $\Rep_{W(k)}(T_w)$ corresponding to the trivial character of $T_w^{(\ell)}$
and the block corresponding to $\chi^{(\ell)}$.  Denote the centers of these blocks by $Z_{w,1}$ and $Z_{w,\chi^{(\ell)}}$, respectively; our choice of $\chi$ then gives an isomorphism of
$Z_{w,1}$ with $Z_{w,\chi^{(\ell)}}$.

On the other side of the Langlands correspondence, the local Langlands correspondence for tori associates to $\chi$ a Langlands parameter $\tilde \nu_w: W_F \rightarrow \CT_w^L(\overline{\CK})$;
the restriction $\nu_w$ of $\tilde \nu_w$ to $I_F^{(\ell)}$ depends only on $\chi^{(\ell)}$.  Consider the functor that associates to a $W(k)$-algebra $R$ the set of parameters
$W_F \rightarrow \CT_w^L(R)$ whose restriction to $I_F^{(\ell)}$ is equal to $\nu_w$.  This functor is easily seen to be representable by a finite type affine scheme $\Spec R_{\nu}^w$, and
there is a universal Langlands parameter $\rho_{w,\nu}: W_F \rightarrow \CT_w^L(R_{\nu}^w)$.  Note that the torus $\Hom(X',\GG_m) \subseteq \CT_w^L$ acts on
$\Spec R_{\nu}^w$ by conjugation; let $(R_{\nu}^w)^{\inv}$ be the subring of $R_{\nu}^w$ invariant under this action.

We then have the following Proposition, which can be seen as an analogue of Conjecture~\ref{conj:strong}
for the nonsplit torus $T_w$:

\begin{proposition} \label{prop:LLIFFT}
There is a unique isomorphism $$\LL_w: Z_{w,\chi^{(\ell)}} \rightarrow (R_{\nu}^w)^{\inv}$$ 
which is compatible with the local Langlands correspondence for tori, in the sense that
for any Langlands parameter $\rho: W_F \rightarrow \CT_w^L(\overline{\CK})$, corresponding to a character $\chi_{\rho}$ of $T_w$, and any $z \in Z_{w,\chi^{(\ell)}}$,
the value of $\chi_{\rho}$ at $z$ is equal to the value of $\LL_w$ at the point of $\Spec (R_{\nu}^w)$ corresponding to $\rho$.
\end{proposition}
\begin{proof}
Any parameter $W_F \rightarrow \CT_w^L(R)$ of type $\nu_w$ differs from $\tilde \nu_w$ by a parameter $W_F \rightarrow \CT_w^L(R)$ that is trivial on $I_F^{(\ell)}$.  
Thus ``twisting by $\nu_w$'' induces an isomorphism of $\Spec R_{\nu}^w$ with $\Spec R_1^w$, where $1$ is the trivial character of $I_F^{(\ell)}$.  On $\overline{\CK}$-points,
this isomorphism is compatible with the local Langlands correspondence for tori and the ``twisting by $\chi$'' isomorphism of $Z_{w,1}$ with $Z_{w,\chi^{(\ell)}}$.  We can thus reduce
to the case where $\chi^{(\ell)}$ and $\nu_w$ are the trivial character.

In this case we can be very explicit: on the one hand, we have isomorphisms:
$$Z_{w,1} = W(k)[T_w/T_w^{(\ell)}] \cong W(k)[\Hom(X,(F^{\ur})^{\times}/(\OO_{F^{\ur}}^{\times})^{(\ell)})^{\tFr}],$$
where $\tFr$ acts on $X$ via $w$ and on $\OO_{F^{\ur}}^{\times}$ in the usual way.  Let $\unif$ be a uniformizer of $F$ corresponding to our Frobenius element $\tFr$.  We then have a
canonical isomorphism:
$$(F^{\ur})^{\times}/(\OO_{F^{\ur}}^{\times})^{(\ell)} \cong \ZZ\cdot \unif \times \overline{\FF}_q^{\times},$$
where $\tFr$ acts trivially on the first factor and by $q$th powers on the second.  We thus obtain an isomorphism:
$$Z_{w,1} \cong W(k)[\Hom(X,\ZZ)^w] \otimes W(k)[\Hom(X/(qw-1)X,\overline{\FF}_q^{\times})].$$

On the other side of the Langlands correspondence, fix a generator $\tsigma$ of $I_F/P_F$.  Then a Langlands parameter $W_F \rightarrow \CT_w^L$ trivial on $I_F^{(\ell)}$
is determined by the images of $\tFr$ and $\tsigma$; these form a pair of diagonal matrices $\CF$ and $\sigma$ such that $\CF w^{-1} \sigma (\CF w^{-1})^{-1} = \sigma^q$.
Since $\CF$ and $\sigma$ commute, this condition is equivalent to the condition $\sigma^{w^{-1}} = \sigma^q$.  Thus $\Spec R_1^w$ decomposes as a product:
$$\Spec R_1^w \cong \Spec W(k)[X'] \times \Spec W(k)[X'/(q-w)X'],$$
where the first factor parameterizes $\CF$ and the second parameterizes $\sigma$.  The conjugation action of $t \in \Hom(X',{\mathbb G}_m)$ on this product fixes the second factor
and acts by multiplication by $t^{w^{-1} - 1}$ on the first.  We thus obtain a product decomposition:
$$\Spec (R_1^w)^{\inv} \cong \Spec W(k)[X'/(1 - w)X'] \times \Spec W(k)[X'/(q - w)X'].$$

On the first factor, the isomorphism of $Z_{w,1}$ with $(R_1^w)^{\inv}$ is induced by the isomorphism $\Hom(X,\ZZ)^w \cong X'/(w-1)X'$.  On the second factor we have to work a bit harder.
Note that $qw - 1$ divides $q^r - 1$, where $r$ is a multiple of the order of $w$.  Thus $\Hom(X/(qw-1)X, \overline{\FF}_q^{\times})$ is isomorphic to
$\Hom(X/(qw-1)X, \FF_{q^r}^{\times})$.  Our choice of $s$ gives rise to a system of generators for $\FF_{q^r}^{\times}$ for all $r$, compatible with respect to norm maps;
we can thus identify $\Hom(X/(qw-1)X, \FF_{q^r}^{\times})$ with the kernel of $qw^{-1} - 1$ on $X'/(q^r - 1)X'$, via the isomorphism 
$$X'/(q^r - 1)X' \cong \Hom(X/(q^r - 1)X, \ZZ/(q^r - 1)\ZZ).$$
Finally, multiplication by $1 + qw^{-1} + \dots + q^{r-1}w^{1-r}$ identifies this kernel with $X'/(qw^{-1} - 1)X'$.  The resulting isomorphism of $\Hom(X/(qw-1)X,\overline{\FF}_q^{\times})$
with $X'/(qw^{-1} - 1)X'$ is independent of $r$, and gives the desired map from the second factor of $Z_{w,1}$ to the second factor of $(R_1^w)^{\inv}$.  One checks easily
that the resulting isomorphism is compatible with local Langlands.
\end{proof}

The $L$-homomorphism of $\CT_w^L$ into $\GL_n$ takes Langlands parameters for $T_w$ to Langlands parameters for $G$.  If the former has type $\nu_w$, then so does the latter (where
we regard $\nu_w$ as an $\ell$-inertial type by embedding it in $\GL_n(W(k))$ by identifying $\Hom(X',{\mathbb G}_m)$ with the diagonal matrices.)  Thus this $L$-homomorphism induces
a map $R^{\nu} \rightarrow R_{\nu}^w$ that takes $R^{\inv}_{\nu}$ to $(R_{\nu}^w)^{\inv}$.  Combining this with Proposition~\ref{prop:LLIFFT} and Conjecture~\ref{conj:weak}, we obtain
a map:
$$eZ_n \rightarrow Z_{w,\chi^{(\ell)}},$$
where $e$ is the idempotent of $Z_n$ corresponding to the $\ell$-inertial type $\nu$.  On $\overline{\CK}$-points this map takes a point of $\Spec Z_{w,\chi^{(\ell)}}$ 
corresponding to a character with Langlands parameter $\rho$ to the point of $\Spec eZ_n$ corresponding to the Langlands parameter obtained by composing $\rho$ with
the $L$-homomorphism of $\CT_w^L$ into $\GL_n$.

On the other hand, if we fix a generic character $\Psi$ of the unipotent radical $U$ of $G$, and let $\Gamma$ be the module $\cInd_U^G \Psi$, then it follows from results in~\cite{bernstein3}
that the natural map $eZ_n \rightarrow \End_{W(k)[G]}(\Gamma)$ is an isomorphism.  We can thus view the map $eZ_n \rightarrow Z_{w,\chi^{(\ell)}}$ as the affine group analogue
of a Curtis homomorphism.  Since the Curtis homomorphisms have such a nice interpretation via Deligne-Lusztig theory, it is natural to ask if a similar phenomenon is at play here:

\begin{question}
Does there exist an adjoint pair of functors:
$$i_w: {\mathcal D}^b(\Rep_{W(k)}(T_w)) \rightarrow {\mathcal D}^b(\Rep_{W(k)}(G))$$
$$r_w: {\mathcal D}^b(\Rep_{W(k)}(G)) \rightarrow {\mathcal D}^b(\Rep_{W(k)}(F))$$
such that $r_w(\Gamma)$ is a shift of the induction $\cInd_e^{T_w} 1$, and the induced homomorphism:
$$Z_n \rightarrow Z_w$$
is the product over suitable idempotents of the ``affine Curtis homomorphisms'' constructed above?
Moreover, is there a natural geometric construction of such an adjoint pair?
\end{question}

\end{document}